\newcommand\blfootnote[1]{
    \begingroup
    \renewcommand\thefootnote{}\footnote{#1}
    \addtocounter{footnote}{-1}
    \endgroup
}
\titlespacing*{\section}{0pt}{0.6ex plus 0.6ex minus .2ex}{0.6ex plus .2ex}
\titlespacing*{\subsection}{0pt}{0.6ex plus 0.6ex minus .2ex}{0.6ex plus .2ex}
\newtheoremstyle{exampstyle}
{8pt} 
{8pt} 
{\it} 
{} 
{\bfseries} 
{.} 
{.5em} 
{} 
\theoremstyle{exampstyle}
\newtheorem{theorem}{Theorem}
\newtheorem{example}{Example}
\newtheorem{lemma}{Lemma}
\newtheorem{remark}{Remark}
\newtheorem{prop}{Proposition}
\newtheorem{defn}{Definition}
\numberwithin{equation}{section}
\numberwithin{example}{section}
\numberwithin{theorem}{section}
\numberwithin{lemma}{section}
\numberwithin{corollary}{section}
\numberwithin{prop}{section}
\numberwithin{defn}{section}
\numberwithin{remark}{section}
\newtheorem{assumption}[theorem]{Assumption}
\newcommand{\X}{\mathcal{X}}
\newcommand{\Y}{\mathcal{Y}}
\newcommand{\Ps}{\mathcal{P}}
\newcommand{\veps}{\varepsilon}
\newcommand{\A}{\mathcal{A}}
\newcommand{\eat}[1]{}
\newcommand{\ThetaBar}{\overline{\theta}}
\DeclareMathOperator*{\argmin}{\arg\!\min}
\DeclareMathOperator*{\argmax}{\arg\!\max}
\newcommand{\nc}{\normalcolor}
\newcommand{\N}{\mathbb{N}}
\newcommand{\keywords}[1]{\textbf{\textit{Keywords:}} #1}
\renewcommand{\bar}[1]{\overline{#1}}
\renewcommand{\hat}[1]{\widehat{#1}}
\renewcommand{\tilde}[1]{\widetilde{#1}}
\newcommand{\E}{\mathbb{E}}
\newcommand{\R}{\mathbb{R}}
\newcommand{\cc}{\mathbf{c}}
\newcommand{\eps}{\epsilon}
\definecolor{LightCyan}{rgb}{0.88,1,1}
\definecolor{Gray}{gray}{0.9}
\begin{document}

\title{A New Perspective On Denoising Based On Optimal Transport}


\author[1]{Nicol\'{a}s Garc\'{i}a Trillos}
\affil[1]{Department of Statistics\\ University of Wisconsin-Madison}
\affil[1]{email: garciatrillo@wisc.edu}
\vspace{0.1in}

\author[2]{Bodhisattva Sen}
\affil[2]{Department of Statistics\\ Columbia University}
\affil[2]{email: bodhi@stat.columbia.edu }
\date{\today}




 

\maketitle

\abstract{In the standard formulation of the classical denoising problem, one is given a probabilistic model relating a latent variable $\Theta \in \Omega \subset \R^m \; (m\ge 1)$ and an observation $Z \in \R^d$ according to:  $Z \mid \Theta \sim p(\cdot\mid \Theta)$ and $\Theta \sim G^*$, and the goal is to construct a map to recover the latent variable from the observation. The posterior mean, a natural candidate for estimating $\Theta$ from $Z$, attains the minimum Bayes risk (under the squared error loss) but at the expense of over-shrinking the $Z$, and in general may fail to capture the geometric features of the prior distribution $G^*$ (e.g., low dimensionality, discreteness, sparsity). To rectify these drawbacks, in this paper we take a new perspective on this denoising problem that is inspired by optimal transport (OT) theory and use it to study a different, OT-based, denoiser at the population level setting. We rigorously prove that, under general assumptions on the model, this OT-based denoiser is mathematically well-defined and unique, and is closely connected to the solution to a Monge OT problem. We then prove that, under appropriate identifiability assumptions on the model, the OT-based denoiser can be recovered solely from information of the marginal distribution of $Z$ and the posterior mean of the model, after solving a linear relaxation problem over a suitable space of couplings that is reminiscent of standard multimarginal OT (MOT) problems. In particular, thanks to Tweedie's formula, when the  likelihood model $\{ p(\cdot \mid \theta) \}_{\theta \in \Omega}$ is an exponential family of distributions, the OT based-denoiser can be recovered solely from the marginal distribution of $Z$. In general, our family of OT-like relaxations is of interest in its own right and for the denoising problem suggests alternative numerical methods inspired by the rich literature on computational OT.}

\medskip

\keywords{Bayes Estimator; Denoising Estimands; Optimal Transport; Empirical Bayes; Latent Variable Model; Multimarginal Optimal Transport; Tweedie's Formula.}


\maketitle

\section{Introduction}

\label{sec:Intro}

\blfootnote{This work will appear in \textit{Information and Inference: A Journal of the IMA}.}

Consider the following simple latent variable model: 
\begin{equation}\label{eq:Mix-Mdl}
Z \mid\Theta = \theta \stackrel{}{\sim} p(\cdot\mid \theta) \quad \mbox{and} \quad  \Theta  \stackrel{}{\sim} G^*, 
\end{equation}
where $\{ p(\cdot \mid \theta) \}_{\theta \in \Omega}$ is a known parametric family of probability density functions (p.d.f.'s) on $\R^d$ ($d\ge 1$) with respect to (w.r.t.) the Lebesgue measure, and $G^*$ is a probability distribution whose support is contained in the set $\Omega$, a subset of $\R^m$ for $m\ge 1$. We only get to observe $Z$ from the above model and $\Theta$ is the unobserved latent variable of interest. We denote by $P_{Z,\Theta}$ the joint distribution of $(Z,\Theta)$ on ${\R^d} \times \Omega$. By defining a joint distribution over the observable $Z$ and the latent variable $\Theta$, the corresponding distribution of the observed variable is then obtained
by marginalization; $Z$ has marginal distribution $\mu$ with density (w.r.t.~the Lebesgue measure)
\begin{equation}\label{eq:Marg-Dens}
f_{G^*}(z) := \int p(z \mid \theta) \, dG^*(\theta), \qquad \mbox{for } z \in {\R^d}.
\end{equation}
Such latent variable models allow relatively complex marginal distributions to be expressed in terms of more tractable joint distributions over the expanded variable space and thus they provide an important tool for the analysis of multivariate data. Note that~\eqref{eq:Mix-Mdl} captures a conceptual framework within which many disparate methods can be unified, including mixture models, factor models, etc; see e.g.,~\cite{Bartholomew-Et-Al-2011}. In fact,~\eqref{eq:Mix-Mdl} can be thought of as a simple Bayesian model where the prior distribution on $\Theta$ is $G^*$. A few important examples of such a setting are given below.
\begin{example}[Normal location mixture]\label{ex:Norm-loc-Mix}
	Suppose that $p(z \mid \theta) = \varphi_\sigma(z - \theta)$ where $\varphi_\sigma(\cdot)$ is the p.d.f.~of the multivariate normal distribution with mean $0$ and variance $\sigma^2I_d$ ($\sigma^2$ known), i.e., $\varphi_\sigma(z) := \frac{1}{(\sqrt{2 \pi \sigma^2})^d} \exp(-\frac{z^\top z}{2 \sigma^2})$, for $z \in \R^d$; here $m = d$. If $G^*$ is a discrete distribution with finitely many atoms, then $Z$ comes from a finite Gaussian mixture model. This model is ubiquitous in statistics and arises in many application domains including clustering; see e.g.,~\cite{EM-Algo-1977, McLachlan-Peel-2000}. 
\end{example}

\begin{example}[Normal scale mixture]\label{ex:Norm-scale-Mix}
	Suppose that $p(z \mid \theta) = \frac{1}{\theta}\varphi(\frac{z}{\theta})$ where $\varphi(\cdot)$ is the p.d.f.~of the standard normal distribution on $\R$. Here $G^*$ is a probability distribution on the positive real line $(0,\infty)$. This corresponds to the Gaussian scale mixture model; see~\cite{Andrews-Mallows-74}. This model has many applications including in Bayesian (linear) regression and multiple hypothesis testing, see e.g.,~\cite{West-84, Park-Casella-2008, Stephens-2017}.
\end{example}

\begin{example}[Uniform scale mixture]\label{ex:Unif-scale-Mix}
	Suppose that $G^*$ is a distribution on $(0,\infty)$ and $p(\cdot \mid \theta)$ corresponds to the uniform density on the interval $[0,\theta]$ (for $\theta >0$). Thus, the marginal density of $Z$ is given by $f_{G^*}(z) := \int \frac{1}{\theta} \mathbb{I}_{[0,\theta]}(z) \, dG^*(\theta) = \int_z^\infty \frac{1}{\theta} \, dG^*(\theta),$ for $z >0$. It is well-known that any (upper semicontinuous) nonincreasing density on $(0,\infty)$ can be represented as $f_{G^*}$ for a suitable $G^*$ \cite[p.~158]{Feller-71}. This class of distributions
	arises naturally via connections with renewal theory (see e.g.,~\cite{Woodroofe-Sun-1993}), multiple testing (see e.g.,~\cite{Langaas-Et-Al-2005},~\cite{Ignatiadis-Huber-21}), etc.
\end{example}


We consider the goal of estimating the unobserved $\Theta$ in~\eqref{eq:Mix-Mdl}; we call this task that of {\it denoising} $Z$. Traditionally, this goal has been formulated as that of finding an estimator $\mathfrak{d}^*(\cdot)$ that minimizes the {\it Bayes risk} w.r.t.~a {\it loss function} $\ell:\R^m \times \R^m \to [0,\infty)$, i.e.,
\begin{equation}\label{eq:Bayes-risk}
\E \left[\ell(\mathfrak{d}(Z),\Theta) \right] \equiv \int_\Omega \int_{\R^d} \ell(\mathfrak{d}(z),\theta) \, p(z \mid  \theta) \, dz \, dG^*(\theta)
\end{equation}
over all measurable functions $\mathfrak{d}: \R^d \to \R^m$, where $(Z,\Theta) \sim P_{Z,\Theta}$ (i.e., $\Theta \sim G^*$ and $Z \mid \Theta = \theta \sim p(\cdot \mid \theta)$). The best estimator $\mathfrak{d}^*(Z)$ of $\Theta$, in terms of minimizing~\eqref{eq:Bayes-risk}, is called the {\it Bayes estimator} under the  loss $\ell(\cdot,\cdot)$. 
\begin{example}[Bayes estimator under squared error loss] When we use the loss function $\ell(a,\theta) := |a - \theta|^2$ (here  $a,\theta \in \R^m$ and $|\cdot|$ denotes the usual Euclidean norm), the Bayes estimator $\overline{\theta}(\cdot)$ minimizing~\eqref{eq:Bayes-risk} turns out to be the {\it posterior mean}, i.e.,
	\begin{equation}\label{eq:Bayes-Est}
	\overline{\theta}(Z) := \E[\Theta \mid Z].
	\end{equation}
\end{example}

In this paper we take a different perspective on the denoising problem inspired by the theory of optimal transport (OT). To motivate our approach to estimating the unobserved $\Theta$ in~\eqref{eq:Mix-Mdl}, we first highlight a drawback of the Bayes estimator. Although the {\it posterior mean} $\bar \theta(Z) \equiv \E[\Theta \mid Z]$ in~\eqref{eq:Bayes-Est} attains the smallest Bayes risk  (see~\eqref{eq:Bayes-risk}) among all estimators of $\Theta$ (under the squared error loss), its distribution is different from $G^*$ (recall that $\Theta \sim G^*$). In fact, in some cases the Bayes estimator $\bar \theta(Z)$ 
yields a `shrunken' estimate of $\Theta$. The left panel of Figure~\ref{fig:Motivation} illustrates this with $n=60$ data points $Z_1,\ldots, Z_n$ (denoted by the {\color{blue} blue} dots) drawn from the model $Z_i \mid \Theta_i = \theta \sim N(\theta,1)$ where $\Theta_i \stackrel{iid}{\sim} G^*$ with $G^* = N(0,\tau^2)$ and $\tau^2=1$. The latent $\Theta_i$'s are denoted by {\color{red} red} dots, whereas the Bayes estimator $\bar \theta(Z_i)$ is depicted by black dots. We can see that the Bayes estimator (excessively) shrinks the observations in order to achieve optimal denoising (compare the distributions of the {\color{red} red} and the black dots). The resulting distribution of the Bayes estimators $\bar \theta(Z)$ is $N(0,\frac{1}{2})$, which has a much smaller variance than $G^* \equiv N(0,{1})$. 

\begin{figure}
	\centering
	\includegraphics[scale=0.48]{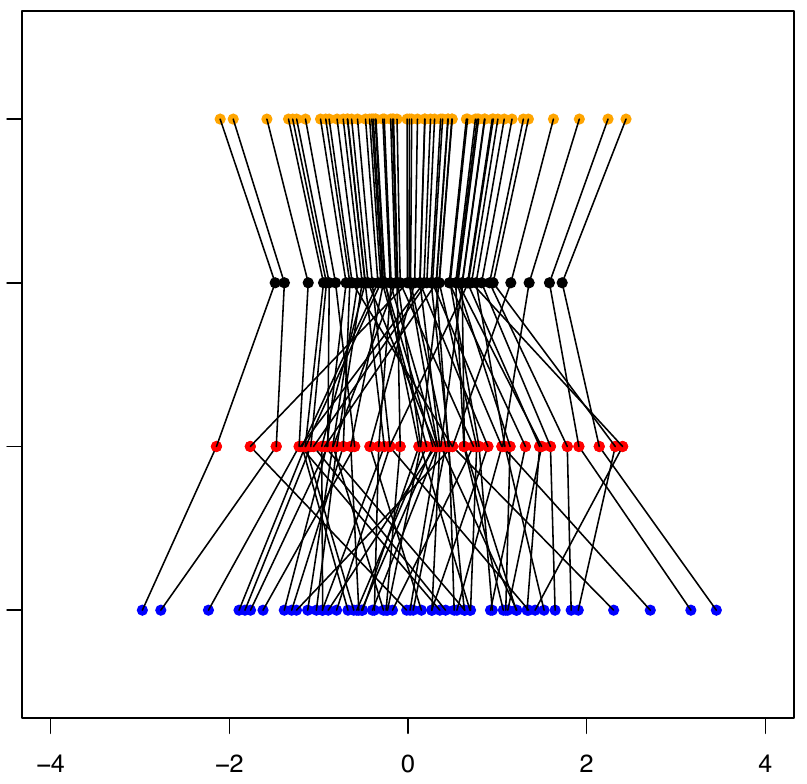}
	\includegraphics[scale=0.48]{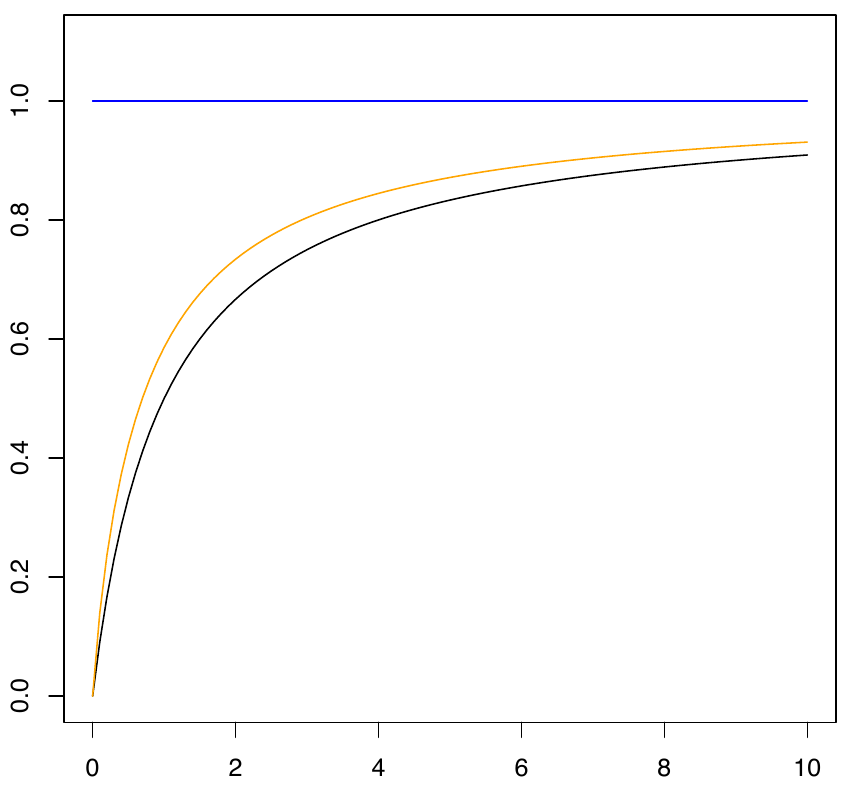} 
	\caption{Toy example with $n = 60$ in $d=1$ where $p(\cdot \mid \theta)$ is the density of  $N(\theta, 1)$ and $G^* = N(0, \tau^2)$. \textbf{Left:} Observations $Z_1,\ldots, Z_n$ (in {\color{blue} blue}) obtained from model~\eqref{eq:Mix-Mdl} with $\tau^2 = 1$ are connected to their true unobserved latent variables $\{\Theta_i\}_{i=1}^n$ (in {\color{red} red}); the Bayes estimator $\bar \theta(Z_i)$ (in {\color{black} black}) is connected to $\Theta_i$ (in {\color{red} red}) and the corresponding OT-based denoiser $\delta^*(Z_i)$ (in {\color{orange} orange}). \textbf{Right:} Plot of the risk curves of the three estimators of $\Theta$ --- $Z$ (in {\color{blue} blue}),  $\bar \theta(Z)$  (in black) and $\delta^*(Z)$ (in {\color{orange} orange}) --- as $\tau^2$ varies from 0 to 10.  }\label{fig:Motivation}
\end{figure}
In contrast, in this paper we consider the {\it OT-based denoiser} $\delta^*(Z)$ (see~\eqref{eq:delta-OT}), 
shown in the left plot of Figure~\ref{fig:Motivation} by the {\color{orange} orange} dots, which corrects this drawback and produces estimates that have the distribution $G^*$; compare the distributions of the {\color{orange} orange} and the {\color{red} red} dots. The plot of the risk functions for the three estimators --- $Z$,  $\bar \theta(Z)$ and $\delta^*(Z)$ --- as $\tau^2$ varies show that the proposed {OT-based denoiser} $\delta^*(Z)$ achieves the distributional stability (i.e., $\delta^*(Z) \sim G^*$) at very little cost; compare the risk functions for $\delta^*(Z)$ (in {\color{orange} orange}) and $\bar \theta(Z)$ (in black). See Remark~\ref{rem:Nor-Nor} for the detailed computations.


\begin{figure}
	\centering
	\includegraphics[scale=0.38]{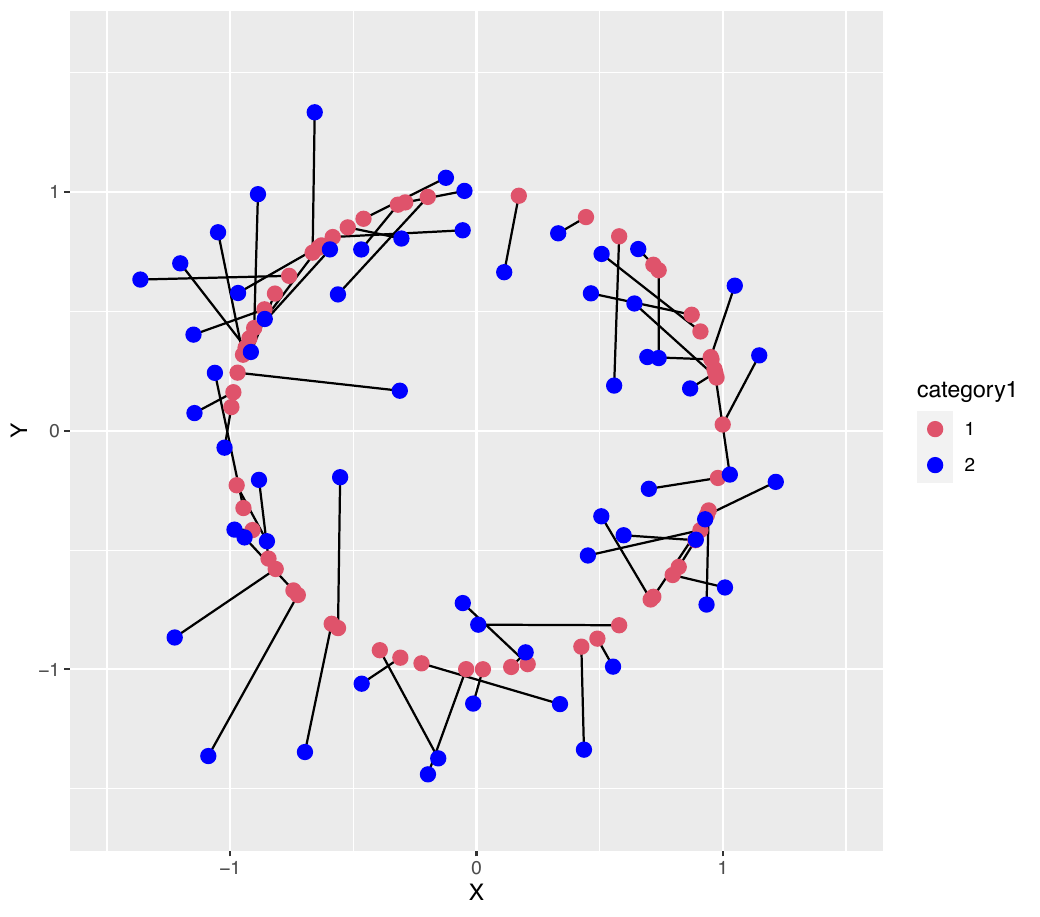} 
	\includegraphics[scale=0.485]{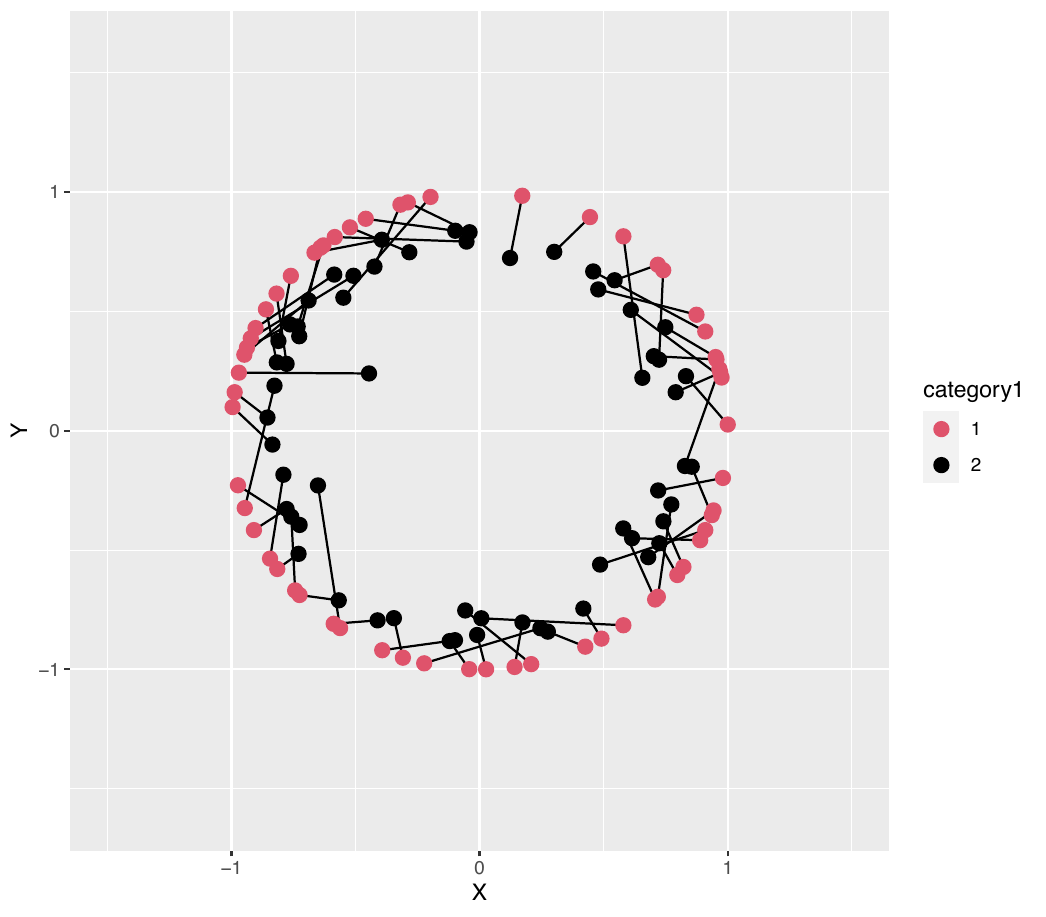} 
	\includegraphics[scale=0.49]{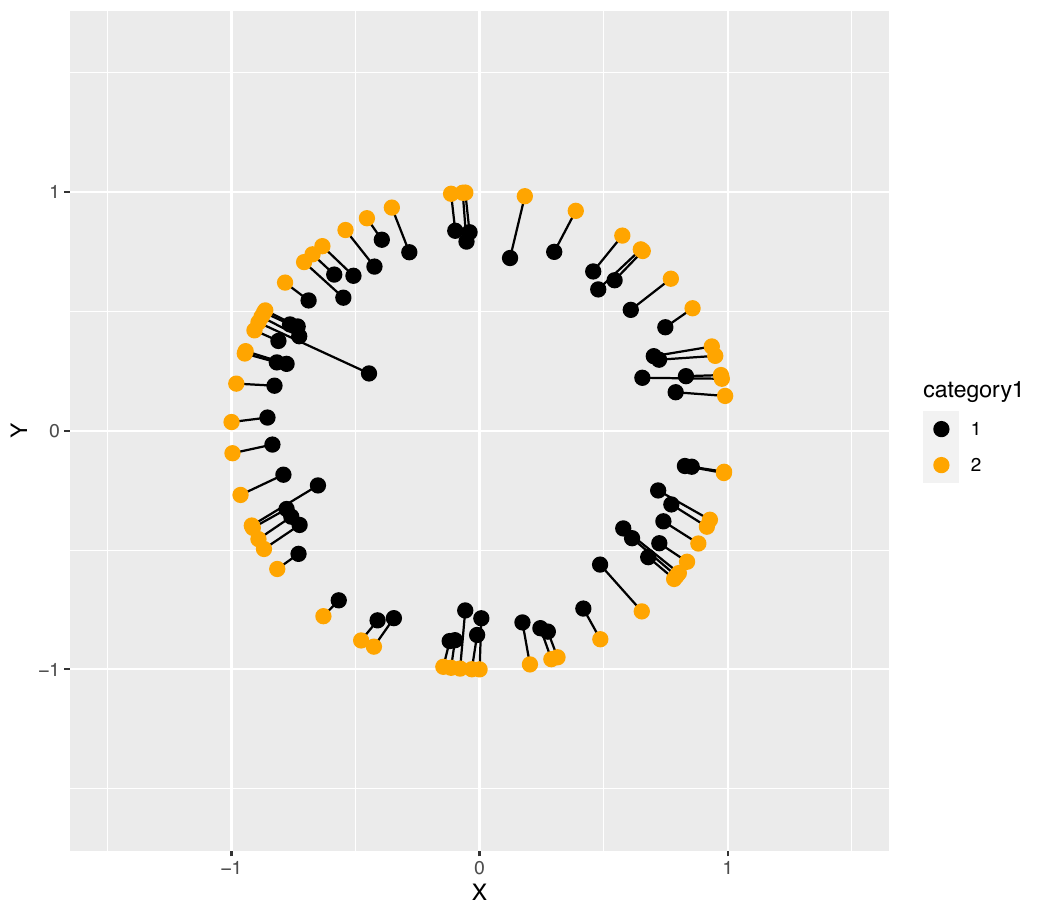} 
	\caption{Toy example with $n = 60$ in $d=2$ where $p(\cdot \mid \theta)$ is the density of  $N(\theta, (0.3)^2\cdot I_2)$ and $G^*$ is the uniform distribution on the unit circle. \textbf{Left:} Observations $Z_1,\ldots, Z_n$ (in {\color{blue} blue}) obtained from model~\eqref{eq:Mix-Mdl} are connected to the corresponding unobserved latent variables $\{\Theta_i\}_{i=1}^n$ (in {\color{red} red}). \textbf{Center:} The Bayes estimator $\bar \theta(Z_i)$ (in black) is connected to $\Theta_i$ (in {\color{red} red}), for every $i=1,\ldots, n$.  \textbf{Right:} The Bayes estimator $\bar \theta(Z_i)$ (in black) is connected to its corresponding OT-based denoiser $\delta^*(Z_i)$ (in {\color{orange} orange}) lying on the circle.    }\label{fig:Normal-Normal}
\end{figure}

This (over)-shrinkage by the Bayes estimator $\bar \theta(Z)$ is more acute when $d \ge 2$. In general, the Bayes estimator $\bar \theta(Z)$ is not necessarily guaranteed to lie `close' to $\text{spt}(G^*)$, the support\footnote{The smallest closed set containing probability mass 1.} of $G^*$ (recall that $\Theta \sim G^*$). To illustrate this,  in Figure~\ref{fig:Normal-Normal} we consider another example, this time with $d=m=2$. Here we take $n=60$ data points $Z_1,\ldots, Z_n \in \R^2$ (depicted by the {\color{blue} blue} dots in the left panel of Figure~\ref{fig:Normal-Normal}) drawn from the normal location mixture model (Example~\ref{ex:Norm-loc-Mix}) with the latent variables $\Theta_1, \ldots, \Theta_n$ drawn uniformly on the circle of radius 1 (shown by the {\color{red} red} dots in the left panel of Figure~\ref{fig:Normal-Normal}). We connect $Z_i$ with $\Theta_i$ by a black line, for each $i=1,\ldots, n$, in the plot. The middle panel of Figure~\ref{fig:Normal-Normal} shows the Bayes estimator\footnote{Here, the Bayes estimator, defined in~\eqref{eq:Bayes-Est}, is approximated by a fine discretization of $G^*$, i.e., $G^* \approx \frac{1}{M} \sum_{i=1}^M \delta_{a_i}$ where $M = 1200$ and the $a_i$'s lie uniformly on the circle.} at the observed data points $\bar \theta(Z_i)$ (depicted by the black dots) connected to the corresponding $\Theta_i$'s (in {\color{red} red}). As can be easily seen from this plot, the Bayes estimator shrinks most of the observations towards $0 \in \R^2$. In contrast, our proposed {OT-based denoiser} corrects this drawback and maps the Bayes estimator $\bar \theta(Z_i)$ to $\delta^*(Z_i)$ (shown in the right panel of Figure~\ref{fig:Normal-Normal} by {\color{orange} orange} dots) which lies on the circle. Note that $\delta^*(Z_i)$, by definition, takes values in $\text{spt}(G^*)$, the support of $G^*$. 


In fact, if the goal is to estimate $\Theta \sim G^*$, it is reasonable to restrict $\mathfrak{d}(\cdot)$ in~\eqref{eq:Bayes-risk} to all estimators such that $\mathfrak{d}(Z)$ is distributed (approximately) as $G^*$. This type of requirement has been explored in previous works in the literature (see e.g.,~\cite{Jasa1984,NEURIPS2021_d77e6859}) and is particularly important when we believe that $G^*$ is discrete with a few atoms (which corresponds to the {\it clustering problem}) or when we believe that $G^*$ has `structure' (e.g., supported on a lower dimensional manifold in $\R^m$). 
In light of this discussion, it is natural to seek solutions to  
\begin{equation}\label{eq:Oracle-Bayes-risk}
\inf_{\delta: \R^d \rightarrow \R^m } \E_{(Z, \Theta)\sim P_{Z,\Theta} }\left[|\delta(Z)- \Theta|^2 \right]  \qquad \mbox{subject to }\;\; \delta(Z) \sim G^*,
\end{equation}
among all measurable functions $\delta: {\R^d} \to \R^m$, where we consider $\ell(\cdot,\cdot)$ to be the squared error loss for simplicity; see Appendix~\ref{sec:General-cost} for a discussion on more general loss functions.  The constraint $\delta(Z) \sim G^*$ ensures that the `estimator' $\delta(Z)$ of $\Theta$ has the same distribution as $G^*$ (in particular, the same support as $\Theta \sim G^*$), thereby addressing the above drawbacks; cf.~\eqref{eq:Bayes-risk}. Solutions of \eqref{eq:Oracle-Bayes-risk}, if they exist, can be described as \textit{distortion} minimizers under a perfect \textit{perception} quality constraint; see \cite{NEURIPS2021_d77e6859,BlauMichaeli} for definitions and motivation for this terminology. \nc

\nc 
The above discussion leads to the following natural questions: 
\begin{enumerate}
	\item[\textbf{Q1.}] Under what conditions can it be guaranteed that there exist solutions to problem \eqref{eq:Oracle-Bayes-risk}? Are these solutions unique?
\item[\textbf{Q2.}] If there exists a solution, how can one characterize it, and what potential approaches can one follow to find it?
 \item[\textbf{Q3.}]  Is it possible to obtain a solution solely based on the marginal distribution of observations and knowledge of the likelihood model (without explicitly using $G^*$)? \nc 
\end{enumerate}

The purpose of this paper is to provide answers to the above questions in the population level setting, implicitly also assuming that $G^*$ is known. In this process, we lay down some mathematical foundations and outline some strategies for future implementation of our ideas in finite data settings, with known or unknown $G^*$. Q3 is motivated by the fact that, in general, an attempt to recover $G^*$ from observations can lead to a difficult deconvolution problem; see more discussion below.

Our first main result, \textbf{Theorem}~\ref{thm:Main1}, states that, under certain assumptions on the model, problem~\eqref{eq:Oracle-Bayes-risk} indeed possesses a unique solution $\delta^*(\cdot)$; throughout the paper, by unique solution we mean unique $\mu$-a.e. What is more, this solution can be found by solving an OT problem (defined precisely in \eqref{eq:OT_PushForward}) between the distribution of the Bayes estimator $\overline{\theta}(Z)$ and $G^*$, and can be characterized as the composition of the gradient of a certain convex function and $\overline{\theta}(\cdot)$\nc. We refer to this $\delta^*(\cdot)$ as the \textit{OT-based denoiser} associated to the model $(\{ p(\cdot\mid \theta) \}_{\theta \in \Omega}, G^*)$. Problem \eqref{eq:Oracle-Bayes-risk} can also be interpreted as an extreme case in a family of problems with a soft penalty defined according to
\begin{equation}
\label{eq:EquivProblemWithPnelizatin}
\inf_{\delta: {\R^d} \rightarrow \R^m } \E_{(Z, \Theta)\sim P_{Z,\Theta} } \left[ | \delta(Z) - \Theta|^2  \right] + \frac{1}{2 \tau} W_2^2(\delta_{\sharp} \mu, G^*) , 
\end{equation}
where $\tau >0$ is a tuning parameter, $\delta_{\sharp} \mu $ is the pushforward of $\mu$ by $\delta$ (i.e., the distribution of $\delta(Z)$ if $Z \sim \mu$; see Definition~\ref{defn:Pushforward}) and $W_2(\cdot, \cdot)$ denotes the $2$-Wasserstein distance between probability distributions (see Definition~\ref{def:2Wass}). Formally, when $\tau\rightarrow \infty$ we recover the standard unconstrained risk minimization problem, whose solution is the Bayes estimator, whereas we recover problem \eqref{eq:Oracle-Bayes-risk} when $\tau \rightarrow 0$. For any other value of $\tau$ in between these two extremes, \textbf{Theorem}~\ref{thm:Main2} guarantees that the solution to \eqref{eq:EquivProblemWithPnelizatin} is unique and can be explicitly written as a simple linear interpolation of the OT-based denoiser $\delta^*(\cdot)$ and the Bayes estimator $\overline{\theta}(\cdot)$, a result very closely related to the characterization of the so called \textit{distortion-perception tradeoff} in
Wasserstein space established in \cite{NEURIPS2021_d77e6859}. \nc We will refer to \eqref{eq:EquivProblemWithPnelizatin} as a \textit{latent space penalization} approach to denoising, given that the penalty term $W_2^2(\delta_{\sharp}\mu, G^*)$ involves an explicit comparison of distributions in the latent space $\Omega \subset \R^m$. {It is worth highlighting that other optimization problems similar to \eqref{eq:EquivProblemWithPnelizatin} have been considered in papers such as \cite{BlauMichaeli,WangWen} (see also references therein), where the $W_2$ distance between measures is substituted by other metrics over probability measures, including the $1$-OT distance $W_1$ and other loss functions as used in generative adversarial networks (GANs). As mentioned earlier, and in contrast to the aforementioned papers, in this paper we pursue an in depth analysis of the properties of solutions to problems like \eqref{eq:EquivProblemWithPnelizatin} (or \eqref{eq:FModel} below) and suggest novel strategies to find them.


%


Although the characterization of the OT-based denoiser $\delta^*(\cdot)$ as a solution to an OT problem is appealing, in many real applications $G^*$ may be unknown, making this characterization difficult to implement. One possible approach to go around this issue is to estimate $G^*$ using i.i.d.~data from~\eqref{eq:Mix-Mdl} using tools from what is usually referred to in statistics as {deconvolution} (see e.g.,~\cite{carroll1988optimal, zhang1990fourier, fan1991optimal, meister2009deconvolution}). This approach is also taken in the {\it empirical Bayes} literature; see, e.g.,~\cite{Robbins-1956, Jiang-Zhang-2009, Efron-2011, Efron-19, Soloff-Et-Al-2021}, as well as the brief discussion on this topic that we present in Appendix \ref{sec:EmpiricalBayes}. In this paper, however, we offer an alternative approach and study yet another formulation for the denoising problem that closely resembles \eqref{eq:EquivProblemWithPnelizatin} but where we directly work with $\mu$, the (marginal) distribution of the observed data (see~\eqref{eq:Marg-Dens}). \nc Indeed, we consider the optimization problem:
\begin{equation}\label{eq:FModel}
\inf_{\delta: {\R^d} \to \R^m }   \mathcal{E}_{\tau}(\delta) :=  \E_{(Z,\Theta) \sim P_{Z,\Theta}} [  |\delta(Z) - \Theta|^2   ]  + \frac{1}{2\tau} W_2^2(\mu_\delta, \mu);
\end{equation}
here, for a given map $\delta$ we define $\mu_\delta$ as the probability measure over $\R^d$ defined as
\begin{equation}\label{eq:mu-delta}
\mu_\delta(A) := \int_A \int_{{\R^d}}  p(z'\mid \delta(z) )\,d \mu(z) \, dz' , \quad \forall A \subseteq \R^d \text{ Borel measurable}.  
\end{equation}
In words, $\mu_\delta$ is the marginal distribution of the variable $z$ assuming that the underlying distribution of the latent variable $\theta$ is given by $G= \delta_{\sharp} \mu$. We will refer to \eqref{eq:FModel} as an \textit{observable space penalization} approach to denoising, given that the penalty term $W_2^2(\mu_\delta, \mu)$ involves an explicit comparison of distributions in the observable space $\R^d$. In \textbf{Proposition}~\ref{thm:FrechetDiff} we show that, under suitable assumptions, the objective function $\mathcal{E}_\tau$ (in~\eqref{eq:FModel}) is Gateaux differentiable w.r.t.~the target $\delta \in \mathbb{L}^2({\R^d} : \R^m; \mu)$\footnote{$\mathbb{L}^2({\R^d} : \R^m; \mu)$ is the space of vector valued (equivalence classes of) measurable functions from ${\R^d}$ into $\R^m$ that are square-integrable w.r.t.~$\mu$.} and provide an explicit formula for its gradient (see~\eqref{eqn:GradientE}). The formula for the gradient, which can be easily adapted to the empirical setting, can in principle be used to implement a first order optimization method seeking a solution for~\eqref{eq:FModel}. Unfortunately, problem \eqref{eq:FModel} is non-convex in $\delta$ and one cannot guarantee the convergence of a steepest descent scheme towards a global minimizer of $\mathcal{E}_\tau(\cdot)$. 
In fact, even the existence of global solutions to \eqref{eq:FModel} is not guaranteed by straightforward arguments in the calculus of variations. The main technical difficulty for this is the lack of lower semicontinuity of the functional $\delta \mapsto W_2^2(\mu_\delta, \mu)$ w.r.t.~the \textit{weak} topology in the Hilbert space $\mathbb{L}^2({\R^d} : \R^m ; \mu)$ (see Definition \ref{def:WeakL2}), a natural topology where one can guarantee pre-compactness of minimizing sequences. 

Despite the above discussion, we can prove that indeed there exist solutions to \eqref{eq:FModel}; see \textbf{Theorem}~\ref{cor:StructureSolutions}. This is achieved by considering a suitable relaxation argument where we ``lift" the original problem \eqref{eq:FModel} to a problem over couplings (see~\eqref{eqn:Relaxation} for details) that, while not of a standard type in OT theory, does resemble multimarginal optimal transport (MOT) problems. Like MOT problems, our relaxation is linear, and its search space enjoys better compactness properties than the original problem \eqref{eq:FModel} that in particular can be used to prove existence of solutions (see \textbf{Theorem}~\ref{lem:StructureGammastar}). This relaxation, which we show is exact under suitable assumptions, also motivates the use of computational tools in OT for constructing solutions of \eqref{eq:FModel}; this will be explored in future work. Finally, we highlight that this relaxation is the key mathematical construction that allows us to prove \textbf{Theorem}~\ref{thm:Soft-F}, which states that, under the identifiability assumptions on the probabilistic model that are written down precisely in Assumption~\ref{assump:Identifiability}, the solutions $\delta_\tau^*$ of \eqref{eq:FModel}
converge, as $\tau \rightarrow 0$, to the OT-based denoiser $\delta^*$; in Remark \ref{rem:NonIdentifiable} we discuss the non-identifiable case.

As we discuss in Section~\ref{sec:Discussion}, in order to use the relaxation problem \eqref{eqn:Relaxation} to approximate $\delta^*$ from finitely many observations, one would first need to estimate $\overline{\theta} (\cdot)$ from the available data. This is where Tweedie's formula (see~\eqref{eq:Tweedie} in Appendix~\ref{sec:Tweedie}) can be very useful. This formula expresses the posterior mean $\overline{\theta}(\cdot)$ in an exponential family model (see Appendix~\ref{sec:Exp-Family}) in terms of the marginal density $f_{G^*}$ of the observations (and its gradient) only, and can thus be estimated (nonparametrically) directly from observations $Z_1,\ldots, Z_n$, say via kernel density estimation. We thus anticipate to be able to construct consistent estimators for $\delta^*$ without knowing $G^*$ explicitly or having to directly estimate it, at least in the case when the likelihood model is an exponential family of distributions.

\subsection{Previous works}
The main motivation of our work comes from the theory of empirical Bayes (\cite{Robbins-1956}) and its recent revisitations (see e.g.,~\cite{Efron-2003, Efron-2009, Efron-2010, Efron-19}) which consider large data sets that arise from parallel and similar experiments. In the classical empirical Bayes setup the unknown parameters arising from the parallel experiments are assumed to be i.i.d.~random variables with an unknown common prior distribution $G^*$. 

Typically, empirical Bayes methodologies (see e.g.,~\cite{kiefer1956consistency, Laird-1978, Bohning-1999, Jiang-Zhang-2009, Efron-2011, Efron-2014, KM-14, Efron-2016, Gu-Koenker-2017, Koenker-Gu-2019, Soloff-Et-Al-2021, Zhong-et-al-2022, Gu-Koenker-2023} and the references therein) provide statistical procedures which approximate the Bayes rule for the true model (without specifying a prior). In this paper we question this very premise and illustrate (cf.~Figures~\ref{fig:Motivation} and~\ref{fig:Normal-Normal}) that the Bayes estimator, which is optimal in terms of squared error risk, deforms the underlying true distribution of the latent variables (i.e., $\Theta_i$'s) and may not be ideal in large scale denoising problems. This naturally leads us to the field of OT in search of strategies to correct for this deformation.

The area of OT has seen rapid growth in the past years with various applications in statistics and machine learning. In statistical theory, OT appears in at least two general settings: (i) as an interesting estimation problem in its own right, where one uses observations to either approximate the Wasserstein distance between two ground truth distributions (see e.g.,~\cite{Fournier-Guillin-2015, Weed-Bach-2019, delBarrio-Loubes-2019}) or to estimate the actual OT map between them (see e.g., \cite{Hutter-Rigollet-2021, Deb-Et-Al-2021, Divol-2022, Manole-Weed-2024, Fan-2023}), and (ii) as a tool to propose and/or analyze statistical models in classical Euclidean settings (e.g., as in~\cite{Hallin-2017, Hallin-2021, Hallin-2022, Ghosal-Sen-2022, Hallin-2023, Deb-Sen-2023}) or in more abstract settings where data sets consist of, for example, probability distributions (e.g., as in the regression setting for distribution-on-distribution data explored in works like \cite{Ghodrati2022,WassRegression,PanaretosEtAlMulti}). This paper better fits the second class of works, but the adaptation of our ideas to the finite data setting will require the exploration of questions that fall in the first category mentioned above. 

Connections between the denoising problem (understood in a general sense) and ideas from computational OT have been explored before in applications to image and signal denoising in works like \cite{DittmerEtAl,BlauMichaeli,WangWen,NEURIPS2021_d77e6859}; a hard constraint version of \eqref{eq:EquivProblemWithPnelizatin} has been considered in \cite{NEURIPS2021_d77e6859}, where a quantitative form of the so called distortion-perception tradeoff is established. Modern approaches for noise removal with additional good perception quality constraints have been proposed in \cite{delbracio2023inversion}. These approaches take advantage of the gradient structure that denoisers often have. In this paper, we pursue a deeper mathematical analysis than previous works in the literature and explore new approaches, motivated by ideas from the theory of OT, for recovering the OT-based denoiser. \nc One of the key tools that we use in our paper from the literature of OT is the concept of multimarginal OT (see \cite{Pass2015}), which has been explored in the past in a variety of fields including density functional theory in physics and chemistry \cite{Cotar2012,Buttazzo2012}, economics \cite{Ekeland2004,Carlier2008}, and image processing \cite{MOTImaging}, among others. This paper introduces new applications of closely related OT problems.

\subsection{Outline}
The rest of the paper is organized as follows. In Section~\ref{sec:DenoisingLatentPenalty} we present our main results on problems~\eqref{eq:Oracle-Bayes-risk} and~\eqref{eq:EquivProblemWithPnelizatin}. First, in Section \ref{sec:Prelim} we introduce some necessary notation and background that we use in the rest of the paper. Then, in Section \ref{sec:SqError} we state our first main result, Theorem \ref{thm:Main1}, which establishes the existence and uniqueness of solutions of \eqref{eq:Oracle-Bayes-risk}. In Section \ref{sec:SoftPenalty} we state Theorem \ref{thm:Main2}, which characterizes the solution of the soft penalty problem \eqref{eq:EquivProblemWithPnelizatin}. Section \ref{sec:DenoisingObservablePenalty} is devoted to the observable space penalization problem \eqref{eq:FModel}. First, we provide a characterization of the Frech\'et derivative of its objective under suitable differentiability assumptions on the likelihood model. Then we present Theorems \ref{cor:StructureSolutions} and \ref{thm:Soft-F}, where, respectively, we state the existence of solutions to \eqref{eq:FModel} and characterize the behavior of solutions to \eqref{eq:FModel} as the parameter $\tau$ goes to zero. In particular, Theorem \ref{thm:Soft-F} states that, under suitable identifiability assumptions, the OT-based denoiser $\delta^*$ can be recovered as a limit of solutions to \eqref{eq:FModel}. Sections \ref{sec:Proofs} and \ref{sec:DenoisingObservablePenaltyProofs} are devoted to the proofs of our main results from Sections \ref{sec:DenoisingLatentPenalty} and \ref{sec:DenoisingObservablePenalty}, respectively. In Section \ref{sec:Discussion}, the conclusions section, we discuss some future directions for research stemming from this work.  

In Appendices~\ref{sec:Exp-Family}-\ref{sec:EmpiricalBayes} we provide various discussions connected to our main results. In particular, in Appendices~\ref{sec:Exp-Family}-\ref{sec:Tweedie} we introduce exponential families of distributions and describe Tweedie's formula. In Appendix~\ref{app:MeasureFunctional} we state and prove a few results from measure theory and functional analysis that are relevant to the proof of Theorem~\ref{thm:Soft-F}. Appendix~\ref{sec:General-cost} briefly describes OT formulations of~\eqref{eq:Oracle-Bayes-risk} when using more general loss functions (beyond the squared error loss). In Appendix~\ref{sec:EmpiricalBayes} we briefly review the (nonparametric) maximum likelihood estimator of $G^*$, which could potentially be used to implement \eqref{eq:Oracle-Bayes-risk} in practical settings.

\section{Denoising with latent space penalization}
\label{sec:DenoisingLatentPenalty}

\subsection{Preliminaries}
\label{sec:Prelim}
We first introduce some definitions and notation from the theory of OT (see e.g.,~\cite{Villani2003, Villani}). For any metric space $\mathcal{X}$, let $\mathfrak{B}(\mathcal{X})$ denote the set of all Borel measurable subsets of $\X$, and let $\mathcal{P}(\X)$ be the set of all Borel probability measures over $\mathcal{X}$. It will be convenient to first introduce the notion of {\it pushforward} of a measure by a map and rewrite the constraint in \eqref{eq:Oracle-Bayes-risk} in terms of pushforwards.

\begin{defn}[Pushforward of a measure]\label{defn:Pushforward}
	Given a measurable map $\delta: \X \rightarrow \Y$ and a probability measure $\nu$ over $\X$, the measure $\delta_{\sharp} \nu$, the pushforward of $\nu$ by $\delta$, is the measure defined according to $\delta_{\sharp}\nu(A) := \nu(\delta^{-1}(A))$ for every Borel subset $A$ of $\Y$. In other words, if $X \sim \nu$, then $\delta(X) \sim \delta_{\sharp} \nu$.
\end{defn}

\begin{remark}
	\label{rem:PushForward}
	The constraint $\delta(Z) \sim G^*$ in \eqref{eq:Oracle-Bayes-risk} can be rewritten as $\delta_{\sharp}\mu =G^*$.    
\end{remark}

Let two probability measures $\nu, \tilde \nu$ be defined over two Polish spaces $\X$ and $\Y$, and consider a lower semicontinuous cost function $c: \X \times \Y \to [0, \infty]$. The dual of the Kantorovich OT problem (see e.g.,~\cite{Villani2003, Villani})
\begin{equation}
C(\nu, \tilde \nu):= \min_{\pi \in \Gamma(\nu, \tilde \nu)} \int \int c(x,y) \,d \pi(x,y), 
\label{eqn:OTGeneric}
\end{equation}
where $\Gamma(\nu, \tilde \nu)$ denotes the set of all Borel probability measures on the product $\X \times \Y$ with marginals $\nu$ and $\tilde \nu$ (a.k.a.~couplings between $\nu$ and $\tilde \nu$), 
is the problem
\begin{equation}
\label{eq:Dual}
\sup_{\phi, \psi} \int \phi(x) \,d\nu(x)  + \int \psi(y) \,d\tilde \nu (y), \;\;  \text{s.t. } \phi(x) + \psi(y) \leq c(x,y), \;\; \nu\text{-a.e.~}x \in \X, \; \tilde \nu\text{-a.e.~} y \in \Y,
\end{equation}
where $\phi$ and $\psi$ are, respectively, in $\mathbb{L}^1(\X,\nu)$ and $\mathbb{L}^1(\Y,\tilde \nu)$. Theorem 5.10 in \cite{Villani} guarantees that primal and dual problems are equivalent. Any solution pair $(\phi, \psi)$ of \eqref{eq:Dual}, if it exists, will be referred to as \textit{optimal dual potentials} for the OT problem \eqref{eqn:OTGeneric}. 

We will often consider the setting where the space $\X$ is a subset of some Euclidean space, $\X = \Y$, and $c(x,y)=|x-y|^2$. When in this setting, we will refer to \eqref{eqn:OTGeneric} as the $2$-OT problem between $\nu$ and $\tilde \nu$ and denote by $W_2^2(\nu, \tilde \nu)$ the minimum value in \eqref{eqn:OTGeneric}, which is nothing but the square of the so-called Wasserstein distance between $\nu$ and $\tilde \nu$. 

\begin{defn}[$2$-Wasserstein distance]
	Given two probability measures $\nu, \tilde \nu$ over $\R^p$ with finite second moments, we define their Wasserstein distance 
	$W_2(\nu, \tilde \nu)$ as
	\begin{equation}
	W_2^2(\nu, \tilde \nu):= \min_{\pi \in \Gamma(\nu, \tilde \nu)} \int |x - y|^2 \, d \pi(x,y).
	\label{def:2Wass}
	\end{equation}
\end{defn}
A landmark result in the theory of OT due to Brenier characterizes the optimal coupling $\pi$ for the $2$-OT problem between two measures $\nu$ and $\tilde \nu$ when $\nu$ is absolutely continuous w.r.t.~the Lebesgue measure; see e.g.,~\cite[Theorem 3.15]{Villani}.


\begin{theorem}[Brenier] Let $\nu$ and $\tilde \nu$ be two Borel probability measures over $\R^p$ such that $\int |x|^2 \, d\nu(x) < \infty$ and $\int |y|^2 \, d{\tilde \nu}(y) < \infty$. Suppose further that $\nu$ has a Lebesgue density. Then there exists a convex function $\psi: \R^p \to \R \cup \{+\infty\}$ whose gradient $T= \nabla \psi$ pushes $\nu$ forward to $\tilde \nu$. In fact, there exists only one such $T$ that arises as the gradient of a convex function, i.e., $T$ is unique $\nu$-a.e. Moreover, $T$ uniquely minimizes Monge's problem:
	\[  \inf_{T  \::\:  T_{\sharp} \nu = \tilde \nu} \int | x- T (x) |^2 d \nu(x)  \]
	and the coupling $(\mathrm{Id} \times T)_{\sharp}\nu$ uniquely minimizes \eqref{def:2Wass}. In the above and in the remainder of the paper, the map $(\mathrm{Id} \times T) : \R^p \to  \R^p \times \R^p$ is defined as $(\mathrm{Id} \times T)(x) = (x, T(x))$. 
	\label{thm:Brenier}
\end{theorem}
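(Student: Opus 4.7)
The plan is to prove Brenier's theorem via Kantorovich duality combined with classical convex analysis. First, existence of an optimal coupling $\pi^\star \in \Gamma(\nu, \tilde\nu)$ for \eqref{def:2Wass} follows from tightness of $\Gamma(\nu, \tilde\nu)$ (hence weak precompactness) together with weak lower semicontinuity of $\pi \mapsto \int |x-y|^2\,d\pi$, and finiteness of the minimum is guaranteed by the finite second moment hypothesis.

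Next, I would exploit the quadratic structure. Expanding $|x-y|^2 = |x|^2 + |y|^2 - 2\langle x,y\rangle$ and observing that the first two terms integrate to constants depending only on the marginals, problem \eqref{def:2Wass} is equivalent to maximizing $\int \langle x,y\rangle\,d\pi$ over $\pi \in \Gamma(\nu,\tilde\nu)$. Applying the dual formulation \eqref{eq:Dual} together with the standard $c$-conjugation procedure yields optimal dual potentials that can be taken as a pair of Legendre conjugates: a proper lower semicontinuous convex function $\phi$ and its conjugate $\phi^\star$ satisfying $\phi(x) + \phi^\star(y) \geq \langle x,y\rangle$ everywhere, with equality $\pi^\star$-almost everywhere. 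The complementary slackness condition then forces $\mathrm{supp}(\pi^\star) \subseteq \{(x,y) : y \in \partial\phi(x)\}$. Equivalently, one can bypass duality by observing that the support of any optimizer is $c$-cyclically monotone, which for the quadratic cost amounts to classical cyclic monotonicity, and then invoke Rockafellar's theorem to embed it into the subdifferential of a convex function.

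At this point the absolute continuity of $\nu$ becomes crucial: any proper convex function on $\R^p$ is differentiable outside a Lebesgue-null set (it is locally Lipschitz on the interior of its domain, hence Rademacher applies), so $\nu$-a.e.\ the subdifferential $\partial \phi(x)$ reduces to the singleton $\{\nabla \phi(x)\}$. Setting $T := \nabla \phi$ and disintegrating $\pi^\star$ in its first variable, one concludes $\pi^\star = (\mathrm{Id} \times T)_{\sharp}\nu$; reading off the second marginal then yields $T_{\sharp}\nu = \tilde\nu$. This simultaneously delivers the transport map, shows it solves Monge's problem (since any Monge map produces a feasible Kantorovich plan of equal cost), and identifies $(\mathrm{Id}\times T)_{\sharp}\nu$ as the optimal Kantorovich coupling. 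Uniqueness follows by applying the same reasoning to any alternative optimizer: if $T_1 = \nabla\phi_1$ and $T_2 = \nabla \phi_2$ both push $\nu$ forward to $\tilde\nu$, the convex combination of the induced couplings is again optimal, and the cyclic-monotonicity/subdifferential argument combined with absolute continuity of $\nu$ forces the two graphs to coincide $\nu$-a.e.

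The main technical obstacle is the upgrade from the existence of dual potentials to their identification with a Legendre conjugate pair: this requires a careful $c$-conjugation argument together with the integrability bookkeeping enabled by the finite second-moment hypothesis, since the raw dual \eqref{eq:Dual} only guarantees potentials in $\mathbb{L}^1$. The alternative route via cyclic monotonicity shifts the burden to proving that the support of any optimizer is $c$-cyclically monotone, itself a nontrivial measure-theoretic step. Either way, the decisive step is the passage from $\mathrm{supp}(\pi^\star) \subseteq \mathrm{graph}(\partial \phi)$ to concentration on $\mathrm{graph}(\nabla \phi)$, which is exactly where absolute continuity of $\nu$ is used and without which the conclusion fails in general.
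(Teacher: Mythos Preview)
The paper does not actually prove this statement: Brenier's theorem is stated in the preliminaries (Section~\ref{sec:Prelim}) as a background result and attributed to the literature, specifically \citet[Theorem 3.15]{Villani}. It is then invoked as a black box in the proof of Theorem~\ref{thm:Main1}.

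Your sketch is a correct outline of the standard proof of Brenier's theorem as found in the cited references. The route via Kantorovich duality, reduction of the quadratic cost to the bilinear term $\langle x,y\rangle$, $c$-conjugation to obtain a convex/conjugate pair, and the use of absolute continuity of $\nu$ to pass from $\partial\phi$ to $\nabla\phi$ is exactly the argument in Villani's books. Your identification of the key technical step (upgrading dual potentials to a Legendre pair, or alternatively going through $c$-cyclic monotonicity and Rockafellar's theorem) and of where absolute continuity enters is accurate. There is nothing to compare against in the paper itself, since no proof is given there.
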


\nc

\subsection{Rewriting \eqref{eq:Oracle-Bayes-risk} as an optimal transport problem}
\label{sec:SqError}
In this subsection we study problem~\eqref{eq:Oracle-Bayes-risk} and develop its connection with standard Monge and Kantorovich OT problems with a suitable cost function. Thanks to Remark \ref{rem:PushForward}, problem~\eqref{eq:Oracle-Bayes-risk} can be written as
\begin{equation}
\min_{\delta \: : \: \delta_{\sharp } \mu = G^* } \E_{(Z, \Theta) \sim P_{Z, \Theta}} \left[ | \delta(Z) - \Theta|^2  \right].    
\label{eq:Original}
\end{equation}
In turn, problem \eqref{eq:Original} is equivalent to:
\begin{equation}
\label{eq:EquivProblem}
\min_{\delta \: : \: \delta_{\sharp } \mu = G^* } \E_{Z  \sim \mu } \left[ | \delta(Z) - \overline{\theta}(Z)|^2  \right],    
\end{equation}
where 
\begin{equation}\label{eq:Post-Mean}
\overline{\theta}(z) := \E_{(Z, \Theta) \sim P_{Z, \Theta}} \left[ \Theta \mid Z =z  \right]
\end{equation} 
is the {\it posterior mean} (and the Bayes estimator under the quadratic loss). This equivalence follows from the well-known bias-variance decomposition for the squared error loss:
\[ \E[ |\delta(Z) - \Theta|^2  \mid Z ] =  \E[|\delta(Z) - \overline{\theta}(Z)|^2 \mid Z  ] + \E[|  \overline{\theta}(Z) - \Theta|^2 \mid Z  ],  \]
which implies that for any arbitrary $\delta: {\R^d} \rightarrow \R^m$ we have
\[ \E_{(Z, \Theta) \sim P_{Z, \Theta}} \left[ | \delta(Z) - \Theta|^2  \right] = \E_{Z \sim \mu}[|\delta(Z) - \overline{\theta}(Z)|^2 ] + \E_{(Z, \Theta)\sim P_{Z,\Theta}}[|  \overline{\theta}(Z) - \Theta|^2 ],\]
from where it follows that the objective in \eqref{eq:Original} is equal to the objective function in \eqref{eq:EquivProblem} up to the constant $R_{\text{Bayes}}:=\E_{(Z, \Theta)\sim P_{Z,\Theta}}[|  \overline{\theta}(Z) - \Theta|^2 ] $, i.e., the Bayes risk.

The advantage of problem \eqref{eq:EquivProblem} is that, as discussed below, it is amenable to the type of relaxation methods that have been studied in OT theory. Indeed, in order to construct a solution to \eqref{eq:EquivProblem} (and thus also to \eqref{eq:Original}), at least for certain families of models $( \{ p(\cdot \mid \theta) \}_{\theta \in \Omega}, G^*)$ satisfying suitable assumptions, we will first consider a Kantorovich relaxation of \eqref{eq:EquivProblem} given by
\begin{equation}
\min_{\pi \in \Gamma(\mu, G^*)}  \iint c_{G^*}(z, \vartheta ) \, d\pi(z,\vartheta),
\label{eqn:Kantorovich}
\end{equation} 
where the  
cost function $c_{G^*}(\cdot,\cdot)$ is defined as
\begin{equation}\label{eq:c_z_y} 
c_{G^*}(z, \vartheta) := | \overline{\theta}(z) - \vartheta |^2, \quad (z, \vartheta)\in {\R^d} \times \Omega;
\end{equation}
note the dependence of $G^*$ on the cost function $c_{G^*}(z, \vartheta)$ in~\eqref{eq:c_z_y} via the Bayes estimator $\bar \theta(z)$, which depends on $G^*$. 

\nc

We make the following assumptions.

\begin{assumption}
	\label{assump:SecondMomentsG}
	The distribution $G^*$ is such that $\int_{\Omega} |\vartheta|^2 dG^*(\vartheta) <\infty $, i.e., $G^*$ has finite second moments. 
\end{assumption}

\begin{assumption}
	\label{assump:PosteriorMean}
	The measure $\overline \theta_\sharp \mu$ is absolutely continuous w.r.t.~the Lebesgue measure in $\R^m$. 
\end{assumption}

\begin{remark}[On our assumptions] Since $\mu$ is absolutely continuous w.r.t.~the Lebesgue measure (recall \eqref{eq:Marg-Dens}), note that Assumption~\ref{assump:PosteriorMean} holds if we assume that the map $\overline{\theta}:\R^d \rightarrow \R^m$ is locally-Lipschitz (and thus differentiable Lebesgue a.e.) and that the Jacobian matrix $D \overline{\theta}(z) \in \R^{d\times m}$ has full rank $\mu$-a.e.~$z$; indeed, this implication follows from the so called coarea formula (see e.g., the theorem in Section 3.1 in~\cite{Federer}). Thus, implicitly, we would be assuming that $d \ge m$. In particular, the above is satisfied for the following scenario. If $\{p(\cdot \mid \theta)\}_{\theta \in \Omega}$ is a regular $k$-parameter exponential family in canonical form, then $\overline{\theta}(\cdot)$ is the gradient of a convex function $\kappa(\cdot)$ (which happens to be the log-partition function of the family); see~\cref{sec:Exp-Family} and~\cref{sec:Tweedie}. Moreover, $\kappa(z)$ is a strictly convex function of $z$ on its domain if the representation is {\it minimal}; see e.g.,~\cite[Proposition 3.1]{WJ-2008}. As convex functions are a.e.~twice continuously differentiable, the Jacobian matrix $D \overline{\theta}(z) \in \R^{d\times m}$ exists a.e., and thus Assumption~\ref{assump:PosteriorMean} is automatically satisfied.~Assumption~\ref{assump:SecondMomentsG} just assumes a finite second moment condition on $G^*$, which is quite mild.
\end{remark}

We are ready to state our first main result.

\nc

\begin{theorem}
	\label{thm:Main1}
	Under Assumptions \ref{assump:SecondMomentsG} and \ref{assump:PosteriorMean}, there exists a unique solution $\pi^*$ to problem \eqref{eqn:Kantorovich} with cost function~\eqref{eq:c_z_y}, which takes the form
	\[\pi^*=(\mathrm{Id} \times \delta^*)_\sharp \mu \]
	for a map $\delta^*(\cdot)$ that is the $\mu$-a.e. unique solution to problem \eqref{eq:Original}, i.e., it is the OT-based denoiser. Furthermore, $\delta^*(\cdot)$ can be written as
	\begin{equation}\label{eq:delta-OT}
	\delta^*(z)= \nabla \varphi ^* ( \overline{\theta}(z)), \qquad \mbox{for }\;\;z \in {\R^d},
	\end{equation}
	to be read: ``the gradient of the function $\varphi^*$ evaluated at $\overline{\theta}(z)$", where $\varphi^*: \R^m \rightarrow \R \cup \{+\infty\}$ is a convex function. In fact, $T^*:=\nabla \varphi^*$ is the solution to the standard quadratic cost Monge OT problem
	\begin{equation}
	\min_{ T:  T_{\sharp }({\overline{\theta}}_{\sharp} \mu)=G^*} \int |\theta- T(\theta) |^2 \,d \overline{\theta}_{\sharp} \mu (\theta)
	\label{eq:OT_PushForward}
	\end{equation} 
	between the measures $\overline{\theta}_{\sharp} \mu$ and $G^*$.
\end{theorem}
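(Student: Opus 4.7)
The plan is to exploit the observation that the cost $c_{G^*}(z,\vartheta) = |\overline{\theta}(z) - \vartheta|^2$ depends on $z$ only through $\overline{\theta}(z)$, and to reduce problem \eqref{eqn:Kantorovich} to a standard quadratic-cost OT problem on $\R^m$, where Brenier's theorem can be applied directly. First, I would verify that $\overline{\theta}_\sharp \mu$ has finite second moments: by Jensen's inequality applied conditionally, $\E[|\overline{\theta}(Z)|^2] \leq \E[|\Theta|^2]$, which is finite by Assumption~\ref{assump:SecondMomentsG}. Combined with Assumption~\ref{assump:PosteriorMean}, this puts us in the setting of Brenier's theorem for $\overline{\theta}_\sharp \mu$ and $G^*$, yielding a unique convex function $\varphi^*$ whose gradient $T^* = \nabla \varphi^*$ pushes $\overline{\theta}_\sharp \mu$ forward to $G^*$ and minimizes \eqref{eq:OT_PushForward}, with the associated optimal coupling $(\mathrm{Id}\times T^*)_\sharp \overline{\theta}_\sharp \mu$ being unique.

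The next step is to transfer this back to a coupling between $\mu$ and $G^*$. I would set $\delta^*(z) := T^*(\overline{\theta}(z)) = \nabla \varphi^*(\overline{\theta}(z))$ and $\pi^* := (\mathrm{Id}\times \delta^*)_\sharp \mu$. The feasibility $\delta^*_\sharp \mu = T^*_\sharp (\overline{\theta}_\sharp \mu) = G^*$ is immediate, so $\pi^* \in \Gamma(\mu, G^*)$. Then, for any $\pi \in \Gamma(\mu, G^*)$, the change of variables
\begin{equation*}
\int c_{G^*}(z,\vartheta)\,d\pi(z,\vartheta) = \int |\theta - \vartheta|^2\,d\bigl((\overline{\theta}\times \mathrm{Id})_\sharp \pi\bigr)(\theta,\vartheta)
\end{equation*}
shows that the cost for the original Kantorovich problem \eqref{eqn:Kantorovich} is bounded below by the $W_2^2$ cost between $\overline{\theta}_\sharp \mu$ and $G^*$, since $(\overline{\theta}\times \mathrm{Id})_\sharp \pi \in \Gamma(\overline{\theta}_\sharp\mu, G^*)$. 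A direct computation (pushing forward via $(\mathrm{Id}\times\delta^*)$) shows this bound is saturated at $\pi^*$, so $\pi^*$ is optimal.

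For uniqueness of $\pi^*$, I would argue as follows. If $\tilde\pi$ is any optimal coupling for \eqref{eqn:Kantorovich}, then $(\overline{\theta}\times \mathrm{Id})_\sharp \tilde\pi$ must be optimal for the standard $2$-OT problem between $\overline{\theta}_\sharp\mu$ and $G^*$. By the uniqueness part of Brenier's theorem, this pushforward equals $(\mathrm{Id}\times T^*)_\sharp \overline{\theta}_\sharp \mu$. Disintegrating with respect to the first marginal, this says that under $\tilde\pi$ the conditional law of $\vartheta$ given $\overline{\theta}(Z) = \theta$ is $\delta_{T^*(\theta)}$ for $\overline{\theta}_\sharp\mu$-a.e.\ $\theta$. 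Since this conditional distribution is deterministic, it also determines the conditional law given $Z = z$ (the latter being a refinement of the former), giving $\vartheta = T^*(\overline{\theta}(Z))$ $\tilde\pi$-a.s., hence $\tilde\pi = \pi^*$. Finally, to transfer uniqueness to problem \eqref{eq:Original}, note that \eqref{eq:Original} is the restriction of \eqref{eqn:Kantorovich} to deterministic couplings, and the unique minimizer $\pi^*$ of \eqref{eqn:Kantorovich} is already of this form, so $\delta^*$ is the unique minimizer of \eqref{eq:Original}.

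The main technical point I expect to require care is the uniqueness argument, specifically the disintegration step showing that conditional determinism under the coarser $\sigma$-algebra generated by $\overline{\theta}(Z)$ forces conditional determinism under $Z$. This uses the tower property together with the fact that a distribution on $\R^m$ whose average (over $\mu$ conditional on $\overline{\theta}(Z)=\theta$) is a Dirac mass must itself be that Dirac mass $\mu$-a.e. Everything else reduces to invoking Brenier's theorem and the pushforward identity for costs of the special form $c_{G^*}(z,\vartheta)=|\overline{\theta}(z)-\vartheta|^2$.
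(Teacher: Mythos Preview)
Your proposal is correct and follows essentially the same route as the paper: reduce \eqref{eqn:Kantorovich} to the standard quadratic OT problem between $\overline{\theta}_\sharp\mu$ and $G^*$ via the pushforward $(\overline{\theta}\times\mathrm{Id})_\sharp$, invoke Brenier's theorem there, and then handle uniqueness by the disintegration argument you outline (which the paper isolates as a separate lemma). Your identification of the disintegration step---that a Dirac conditional law given $\overline{\theta}(Z)$ forces the same Dirac conditional law given $Z$---as the one point requiring care matches exactly where the paper spends its effort.
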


The proof of Theorem \ref{thm:Main1} is presented in Section~\ref{sec:Proofs}. It builds upon Brenier's theorem (Theorem \ref{thm:Brenier}). The first part of Theorem~\ref{thm:Main1} implies that, under Assumptions~\ref{assump:SecondMomentsG} and~\ref{assump:PosteriorMean}, the value of Kantorovich's relaxation  problem in~\eqref{eqn:Kantorovich} is indeed the same as that of Monge's problem~\eqref{eq:Original}. Further, the optimal coupling in~\eqref{eqn:Kantorovich} yields the  solution to~\eqref{eq:Original}. Theorem~\ref{thm:Main1} further says that the optimal solution $\delta^*(\cdot)$ of~\eqref{eq:Original} is related to the Bayes estimator~\eqref{eq:Post-Mean}; in fact, $\delta^*(\cdot)$ pushes the Bayes estimator $\bar{\theta}(\cdot)$ to satisfy the distributional constraint $\delta^*(Z) \sim G^*$. The fact that $\delta^*(\cdot)$ has such a simple form is not immediately obvious from the original formulation of the problem in \eqref{eq:Oracle-Bayes-risk}.

\begin{remark}[Normal-normal location model]\label{rem:Nor-Nor}
	Suppose that $d = m$ and $\Theta \sim N_m(\theta^*,\Sigma^*)$ and $Z \mid  \Theta = \theta \sim N_d(\theta,\Sigma)$, where $\theta^* \in \R^m$ is known, and $\Sigma^* \in \R^{m \times m}$ and $\Sigma\in \R^{d \times d}$ are symmetric positive definite (fixed) matrices. It is then well-known that $$ \bar \theta(Z) = \Sigma^* (\Sigma^* + \Sigma)^{-1} Z +  \Sigma (\Sigma^* + \Sigma)^{-1} \theta^* $$ which shows that
	\begin{equation}\label{eq:Normal-Normal}
	\bar \theta(Z) \sim N_m \Big( \theta^*, A \Big), \qquad \mbox{where } \;\; A :=\Sigma^* (\Sigma^* + \Sigma)^{-1} \Sigma^*,
	\end{equation}
	as unconditionally, $Z \sim N_d(\theta^*, \Sigma^* + \Sigma)$. Therefore, by Theorem~\ref{thm:Main1}, to find the OT-based denoiser $\delta^*$ we need to find the OT map $T^*$ between the distributions $N_m \Big( \theta^*, A \Big)$ and $N_m \Big( \theta^*, \Sigma^* \Big)$ which is given by
	$$T^*: y \mapsto \theta^* + B(y - \theta^*) \qquad \mbox{where } \;\;  B := A^{-1/2} (A^{1/2} \Sigma^* A^{1/2})^{1/2} A^{-1/2}  .$$
	Thus, the OT-based denoiser $\delta^*$ has the form $\delta^*(Z) = T^*(\bar \theta(Z))$. 
	
	To get a better feel for the estimators --- $\bar \theta(Z)$ and $\delta^*(Z)$ --- in this problem, let us consider the special case $d=m =1$ with $\Sigma = 1$ and $\Sigma^* = \tau^2$ and $\theta^* = 0$. Here we can see that the Bayes estimator satisfies $$\bar \theta(Z) := \tau^2 (1 + \tau^2)^{-1} Z, \qquad \mathrm{and \;\; thus}, \qquad \bar \theta(Z) \sim N(0,\tau^4/(1 + \tau^2)).$$ Note that $\tau^4/(1 + \tau^2) < \tau^2$ and thus the Bayes estimator has lower variance than $G^* \equiv N(0,\tau^2)$ (see Figure~\ref{fig:Motivation} for an illustration of this phenomenon via a simple simulation). However, the OT-based denoiser $\delta^*$ has the form $T^*(\bar \theta(Z))$ where $T^*(y) := \tau (\tau^4/(1 + \tau^2))^{-1/2} y$. Here the Bayes risk (i.e., $\E[(\bar \theta(Z) - \Theta)^2]$) is $\tau^2/(1 + \tau^2) < 1$ and the risk of $\delta^*$ is $2 \tau^2 \left(1 - \frac{\tau}{1 + \tau^2} \right)$ (see the black and {\color{orange} orange} curves in the right panel of Figure~\ref{fig:Motivation}).
\end{remark}

\begin{remark}[When $m = 1$] In the special case when $m=1$, the OT-based denoiser $\delta^*(\cdot)$ in~\eqref{eq:delta-OT} can be explicitly expressed as $
	\delta^*(z)= F^{-1}_{G^*} (F_{\overline{\theta}} ( \overline{\theta}(z)))$, for $z \in \R$, where $F^{-1}_{G^*}$ is the quantile function corresponding to the distribution $G^*$ (i.e., $F^{-1}_{G^*}(p) := \inf \{x \in \R: p \le F_{G^*}(x) \}$, for $p \in (0,1)$) and $F_{\overline{\theta}}$ is the distribution function of the random variable $\overline{\theta}(Z)$. This follows easily from the fact that, in one-dimension, Brenier maps have explicit solutions in terms of distribution/quantile functions.   
\end{remark}



\subsection{Soft penalty versions of \eqref{eq:Original}}
\label{sec:SoftPenalty}
We now consider problem~\eqref{eq:EquivProblemWithPnelizatin},
which is a type of relaxation of problem \eqref{eq:Original} where we use a soft penalty on $\delta$ to enforce $\delta_{\sharp}\mu$ to be sufficiently close to $G^*$ as opposed to enforcing a hard constraint as in  \eqref{eq:Original}. The strength of the penalization is determined by the parameter $\tau$, and, intuitively, we should expect to recover the classical Bayes estimator $\bar \theta(Z)$ when $\tau \rightarrow \infty$, and the OT-based denoiser $\delta^*(Z)$ when $\tau \rightarrow 0$. As we show in the result below (see Section~\ref{sec:Interpolation} for its proof), the estimators recovered by solving \eqref{eq:EquivProblemWithPnelizatin} are simple linear interpolators of the Bayes estimator $\bar \theta(Z)$ and $\delta^*(Z)$. 
\begin{theorem}\label{thm:Main2}
	Under the same assumptions as in Theorem \ref{thm:Main1}, there exists a unique solution $\delta^*_\tau$ to \eqref{eq:EquivProblemWithPnelizatin}. Furthermore, the map $\delta_\tau^*(\cdot)$ can be written as
	\begin{equation}\label{eq:delta-OT-tau}
	\delta_\tau^*(z)= \frac{2\tau}{1 +2\tau}\overline{\theta}(z) + \frac{1}{1+2\tau} \delta^*(z)
	, \qquad \mbox{for }\;\;z \in {\R^d},
	\end{equation}
	where $\delta^*(\cdot)$ is the map from Theorem \ref{thm:Main1}.
\end{theorem}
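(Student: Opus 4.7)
The plan is to first reduce \eqref{eq:EquivProblemWithPnelizatin} to minimizing
\[
\tilde{J}(\delta) := \E_{Z\sim\mu}\bigl[|\delta(Z) - \overline{\theta}(Z)|^2\bigr] + \frac{1}{2\tau}\, W_2^2(\delta_\sharp\mu,\, G^*)
\]
via the same bias--variance decomposition used in Section~\ref{sec:SqError}, which discards the additive constant $R_{\textup{Bayes}}$. Writing $M := W_2^2(\overline{\theta}_\sharp\mu,\, G^*)$, I will show that $\tilde J(\delta_\tau^*) = M/(1+2\tau)$ and that no other $\delta$ achieves this value, with uniqueness emerging from rigidity in the inequalities used.

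For the upper bound, set $t := 1/(1+2\tau)$ so that $\delta_\tau^* = T_t \circ \overline{\theta}$ with $T_t := (1-t)\,\mathrm{Id} + t\, T^*$, where $T^* = \nabla\varphi^*$ is the Brenier map from Theorem~\ref{thm:Main1}. Since $T_t$ is the gradient of the convex function $y\mapsto \tfrac{1-t}{2}|y|^2 + t\,\varphi^*(y)$, it is itself a Brenier map from $\overline{\theta}_\sharp\mu$, and the pushforward $(\delta_\tau^*)_\sharp\mu$ is exactly the McCann displacement interpolant at time $t$ between $\overline{\theta}_\sharp\mu$ and $G^*$. A direct computation gives $\E[|\delta_\tau^*(Z)-\overline{\theta}(Z)|^2] = t^2 M$, and McCann's theorem gives $W_2^2((\delta_\tau^*)_\sharp\mu,\, G^*) = (1-t)^2 M$; these sum to $M/(1+2\tau)$ after substituting the value of $t$.

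For the lower bound, I bound each term of $\tilde J(\delta)$ by a Wasserstein distance and then optimize. The coupling $(\overline{\theta}\times\delta)_\sharp\mu \in \Gamma(\overline{\theta}_\sharp\mu,\, \delta_\sharp\mu)$ yields $\E[|\delta(Z)-\overline{\theta}(Z)|^2] \ge \alpha^2$ where $\alpha := W_2(\overline{\theta}_\sharp\mu,\, \delta_\sharp\mu)$; letting also $\beta := W_2(\delta_\sharp\mu,\, G^*)$, the $W_2$ triangle inequality forces $\alpha + \beta \ge \sqrt{M}$. A one-variable calculus exercise then shows that the minimum of $\alpha^2 + \beta^2/(2\tau)$ subject to $\alpha+\beta \ge \sqrt{M}$ equals $M/(1+2\tau)$, uniquely attained at $\alpha_\star = \sqrt{M}/(1+2\tau)$, $\beta_\star = 2\tau\sqrt{M}/(1+2\tau)$.

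For uniqueness, any minimizer $\delta$ must saturate each of the three inequalities above. The unique solution of the one-variable problem pins down $\alpha=\alpha_\star$ and $\beta = \beta_\star$; equality in the triangle inequality forces $\delta_\sharp\mu$ to sit at proportion $t$ on a $W_2$-geodesic from $\overline{\theta}_\sharp\mu$ to $G^*$, and since $\overline{\theta}_\sharp\mu$ is absolutely continuous by Assumption~\ref{assump:PosteriorMean}, Brenier's theorem guarantees this geodesic is unique, so $\delta_\sharp\mu = (\delta_\tau^*)_\sharp\mu$. Equality in the coupling bound then says $(\overline{\theta}\times\delta)_\sharp\mu$ is the unique $W_2$-optimal coupling between $\overline{\theta}_\sharp\mu$ and $(\delta_\tau^*)_\sharp\mu$; by Brenier, this coupling is induced by $T_t$, and hence $\delta = T_t\circ\overline{\theta} = \delta_\tau^*$ $\mu$-a.e. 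The delicate part is really just invoking the right version of Brenier/McCann to conclude uniqueness of both the $W_2$-geodesic and the optimal coupling from absolute continuity of the source measure alone, which is standard in OT theory.
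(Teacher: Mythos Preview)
Your argument is correct and constitutes a genuinely different proof from the paper's. The paper proceeds by relaxing \eqref{eq:EquivProblemWithPnelizatin} to a problem over \emph{pairs} of couplings $(\pi,\tilde\pi)$ with a shared marginal, shows this relaxation is equivalent to a ``barycenter cost'' problem \`a la Agueh--Carlier, observes that the barycenter cost is just $(1+2\tau)^{-1}c_{G^*}$, and then invokes Theorem~\ref{thm:Main1} to identify the unique solution; uniqueness of the relaxation is then pushed back to \eqref{eq:EquivProblemWithPnelizatin}. Your proof instead stays in the map formulation throughout and exploits the \emph{geodesic} structure of $(\mathcal{P}_2,W_2)$ directly: the upper bound comes from recognizing $\delta_\tau^*$ as $T_t\circ\overline{\theta}$ with $T_t$ a McCann interpolant, and the lower bound comes from the coupling inequality $\E|\delta-\overline{\theta}|^2\ge W_2^2(\overline{\theta}_\sharp\mu,\delta_\sharp\mu)$ combined with the $W_2$ triangle inequality and a scalar optimization. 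Uniqueness then falls out of rigidity in the triangle inequality (uniqueness of the displacement geodesic emanating from the absolutely continuous $\overline{\theta}_\sharp\mu$) and in the coupling bound (uniqueness of the Brenier map from $\overline{\theta}_\sharp\mu$), the last step being the same disintegration argument that underlies the paper's Lemma~\ref{lem:ForUniqueness}.

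Your route is shorter and more geometric; it makes transparent that $\delta_\tau^*$ is literally the time-$t$ point on the Wasserstein geodesic pulled back through $\overline{\theta}$. The paper's coupling-relaxation route is heavier but is the natural template if one wanted to extend beyond the squared-error loss or to multi-term penalties, since it mirrors the multimarginal/barycenter machinery already set up for Section~\ref{sec:DenoisingObservablePenalty}. One small point worth making explicit in your write-up: the step ``equality in the $W_2$ triangle inequality forces $\delta_\sharp\mu$ onto \emph{the} geodesic'' uses that any midpoint between $\overline{\theta}_\sharp\mu$ and $G^*$ lies on \emph{some} geodesic (concatenation of geodesics) and that this geodesic is unique because the initial measure is absolutely continuous; both facts are standard but should be cited.
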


\begin{remark}[On the proof of Theorem~\ref{thm:Main2}]
	The proof of Theorem \ref{thm:Main2} is based on a simple relaxation argument that mimics the relaxation in \cite{AguehCarlier} used to reformulate Wasserstein barycenter problems as multimarginal OT problems. 
\end{remark}





\section{Denoising with observable space penalization}
\label{sec:DenoisingObservablePenalty}



Although the characterization of the OT-based denoiser $\delta^*(\cdot)$ as a solution to an OT problem is appealing, in most real applications $G^*$ is unknown. As discussed in the Introduction right before \eqref{eq:FModel} (see also Appendix \ref{sec:EmpiricalBayes}), one possible approach to go around this issue is to estimate $G^*$ using i.i.d.~data from~\eqref{eq:Mix-Mdl} using deconvolution techniques that are also used in the empirical Bayes literature; the resulting approach is in line with the concept of $g$-modelling discussed in \cite{Efron-2014}. In this section, and in the spirit of the $f$-modelling discussed in \cite{Efron-2014}, \nc we take a different approach and study yet another formulation for the denoising problem that closely resembles \eqref{eq:EquivProblemWithPnelizatin} but where we directly work with $\mu$, the (marginal) distribution of the observed data (see~\eqref{eq:Marg-Dens}). In particular, we consider the optimization problem~\eqref{eq:FModel} with objective $\mathcal{E}_{\tau}(\delta)$.

First, we provide an explicit formula for the Gateaux derivative of $\mathcal{E}_\tau$ w.r.t.~$\delta$, when the likelihood model is sufficiently regular. In principle, this Gateaux derivative can be used to implement a first order optimization method to find solutions of \eqref{eq:FModel}, but as discussed in the Introduction, the convergence to global optimizers of this scheme cannot be guaranteed due to the non-convexity of $\mathcal{E}_\tau$. For this reason we consider an alternative methodology which holds under milder assumptions and which will allow us to: (1) prove the existence of solutions of \eqref{eq:FModel}, (2) suggest a linear optimization problem for solving \eqref{eq:FModel}, and (3) recover $\delta^*$, the OT-based denoiser, without explicit knowledge of $G^*$.  Throughout this section we make the following assumption, which is used to guarantee that problem \eqref{eq:FModel} is non-trivial.

\begin{assumption}
	\label{assump:SecondMomentsmu}
The marginal distribution $\mu$ with density as in \eqref{eq:Marg-Dens} has finite second moments.
\end{assumption}
\nc

\begin{prop}
	\label{thm:FrechetDiff}
	Suppose that Assumptions \ref{assump:SecondMomentsG} and \ref{assump:SecondMomentsmu} hold. Suppose also that the likelihood model is such that $p(z \mid \theta)$ is continuously differentiable in $\theta$ (for every $z \in \R^d$). Let $\delta \in \mathbb{L}^2(\R^d :  \R^m ; \mu)$ and suppose that $\mu, \mu_\delta$ (recall $\mu_\delta$ was defined in \eqref{eq:mu-delta}) are such that they admit a unique (up to constant shifts) solution $(\tilde \phi, \tilde \psi)$ to the dual of the $2$-OT problem between $\mu$ and $\mu_\delta$ \nc; in particular, 
	\[  \int \tilde \phi \, d\mu + \int \tilde \psi \, d \mu_\delta = W_2^2(\mu, \mu_\delta). \] 
	Finally, suppose that the function
	\[z \in \R^d \mapsto \int_{\R^d} \tilde \psi(z')  
	\nabla_\theta p(z' \mid \delta(z )) \, dz'  \] 
	belongs to $L^2(\R^d: \R^m; \mu)$.

	Then the objective function $\mathcal{E}_\tau: \mathbb{L}^2({\R^d}: \R^m;\mu) \rightarrow \R$ defined in \eqref{eq:FModel} is Gateaux differentiable at $\delta$, and its gradient at that point takes the form:
	\begin{equation}
	\nabla \mathcal{E}_\tau(\delta)  = 2(\delta(\cdot ) - \overline{\theta}(\cdot))  + \frac{1}{2\tau}  \int_{\R^d} \tilde \psi(z') 
	\nabla_\theta p(z' \mid \delta(\cdot )) \,dz'.
	\label{eqn:GradientE}
	\end{equation}
\end{prop}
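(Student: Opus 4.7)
I would split $\mathcal{E}_\tau(\delta) = \mathcal{E}^{(1)}(\delta) + \frac{1}{2\tau}\mathcal{E}^{(2)}(\delta)$, where $\mathcal{E}^{(1)}(\delta) := \E[|\delta(Z)-\Theta|^2]$ and $\mathcal{E}^{(2)}(\delta) := W_2^2(\mu_\delta,\mu)$, and compute the Fr\'echet derivative of each piece separately. For $\mathcal{E}^{(1)}$, the bias-variance identity
\[
\mathcal{E}^{(1)}(\delta) \;=\; \|\delta-\overline{\theta}\|_{\mathbb{L}^2(\R^d:\R^m;\mu)}^2 + \E[|\overline{\theta}(Z)-\Theta|^2]
\]
already exploited in Section~\ref{sec:SqError} reduces the question to differentiating a squared Hilbert-space norm, so its $\mathbb{L}^2(\mu)$-gradient is $2(\delta-\overline{\theta})$, giving the first summand in~\eqref{eqn:GradientE}.

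For $\mathcal{E}^{(2)}$, I would invoke Kantorovich duality and treat $\nu \mapsto W_2^2(\mu,\nu)$ as a convex functional on $\mathcal{P}_2(\R^d)$. Because $(\tilde\phi,\tilde\psi)$ are optimal dual potentials at $\mu_\delta$ and remain dual-feasible for every other target measure, they provide a supporting affine functional at $\nu = \mu_\delta$, yielding the one-sided bound
\[
\mathcal{E}^{(2)}(\delta+h) - \mathcal{E}^{(2)}(\delta) \;\geq\; \int \tilde\psi \, d(\mu_{\delta+h} - \mu_\delta), \qquad h \in \mathbb{L}^2(\mu).
\]
The reverse inequality, up to $o(\|h\|)$, comes from picking dually optimal potentials $(\tilde\phi_h,\tilde\psi_h)$ at $\mu_{\delta+h}$ and repeating the argument symmetrically. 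Unpacking the definition of $\mu_\delta$ in~\eqref{eq:mu-delta}, the linear part becomes
\[
\int \tilde\psi \, d(\mu_{\delta+h}-\mu_\delta) \;=\; \iint \tilde\psi(z')\bigl[p(z'\mid\delta(z)+h(z)) - p(z'\mid\delta(z))\bigr]\,dz'\,d\mu(z),
\]
which I would Taylor-expand using the assumed $C^1$ regularity of $\theta \mapsto p(z'\mid\theta)$. The $\mathbb{L}^2(\mu)$-integrability hypothesis on $g(z) := \int\tilde\psi(z')\nabla_\theta p(z'\mid\delta(z))\,dz'$, together with dominated convergence and Cauchy--Schwarz, then lets me rewrite this expression as $\langle g, h\rangle_{\mathbb{L}^2(\mu)} + o(\|h\|_{\mathbb{L}^2(\mu)})$, identifying $g$ as the Fr\'echet gradient of $\mathcal{E}^{(2)}$ and yielding the second summand of~\eqref{eqn:GradientE}. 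Summing the two contributions produces the claimed formula for $\nabla\mathcal{E}_\tau(\delta)$.

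The main obstacle is closing the gap between the subgradient lower bound (which uses the fixed $\tilde\psi$) and the matching upper bound (which uses the $h$-dependent $\tilde\psi_h$). The cleanest route I foresee is to first establish the ``$W_2$-Lipschitz'' estimate $W_2(\mu_{\delta+h},\mu_\delta) = O(\|h\|_{\mathbb{L}^2(\mu)})$ via an explicit coupling built from the $C^1$ smoothness of $p(\cdot\mid\theta)$, combine it with the fact that $\nu \mapsto W_2^2(\mu,\nu)$ is convex and locally Lipschitz in $W_2$, and then invoke the standard envelope principle that a convex function on a Hilbert space whose subgradient at a point is unique and agrees with the G\^ateaux derivative is in fact Fr\'echet differentiable there. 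The remaining work amounts to measure-theoretic bookkeeping --- in particular, justifying the Fubini interchanges and the dominated convergence step in the Taylor expansion of $p(z'\mid\delta(z)+h(z))$ uniformly enough in $h$ to close the $o(\|h\|_{\mathbb{L}^2(\mu)})$ bound.
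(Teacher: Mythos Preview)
Your proposal is correct and follows essentially the same approach as the paper: split off the quadratic risk term (whose gradient is $2(\delta-\overline{\theta})$ by the bias--variance identity) and differentiate $W_2^2(\mu,\mu_\delta)$ via Kantorovich duality, using that the optimal potential $\tilde\psi$ gives the first variation. The only difference is presentational: the paper computes the directional derivative in one line by citing Propositions~7.17--7.18 of Santambrogio for the envelope step, whereas you spell out the subgradient sandwich and the Fr\'echet-vs-G\^ateaux bookkeeping explicitly.
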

\begin{proof}
	
	Given the form of $\mathcal{E}_\tau$, it suffices to compute the Gateaux derivative of $W^2_2(\mu, \mu_\delta)$ at $\delta$. Let $\eta \in \mathbb{L}^2({\R^d}: \R^m; \mu)$ be arbitrary. Taking the derivative of $W_2^2(\mu_{\delta + \epsilon \eta} , \mu)$ w.r.t.~$\epsilon$ at $\epsilon=0$, we obtain
	\begin{align}
	\begin{split}
	\frac{d}{d\eps} \Big |_{\eps = 0} W_2^2 (\mu_{\delta + \eps \eta } , \mu ) \, &  =     \frac{d}{d\eps} \Big |_{\eps = 0} \sup_{(\phi, \psi) \textrm{ s.t. } \phi(x)+\psi(y) \leq |x-y|^2} \int \phi \,d\mu  + \int \psi  \,d\mu_{\delta + \eps \eta}  
	\\& = \frac{d}{d\eps} \Big |_{\eps = 0} \int \tilde \psi \,d\mu_{\delta + \eps \eta}
	\\& = \frac{d}{d\eps} \Big |_{\eps = 0}  \int \int \tilde \psi(z')p(z' \mid \delta(z) + \epsilon \eta (z)) \,dz' \,d\mu(z)  
	\\& =  \int_{\R^d} \int_{\R^d} \tilde \psi(z') \frac{d}{d \eps} \Big |_{\eps = 0} (p(z'\mid \delta(z)+ \eps \eta(z))) \,dz' \, d\mu(z) 
	\\& = \int_{\R^d} \int_{\R^d} \tilde \psi(z') 
	\langle \nabla_\theta p(z' \mid \delta(z)),  \eta(z) \rangle_{\R^m} \,dz' \,d\mu(z)
	\\& = \int_{\R^d} \left\langle  \eta(z) ,\int_{\R^d} \tilde \psi(z') 
	\nabla_\theta p(z' \mid \delta(z))   \,dz'  \right \rangle_{\R^m} \,d\mu(z).
	\end{split}
	\end{align}      
	The second equality follows as in Proposition 7.17 (and Proposition 7.18) in \cite{Santambrogio} and the third equality just uses the definition of $\mu_{\delta + \eps \eta}$. Since $\eta$ was arbitrary, we deduce \eqref{eqn:GradientE}. 
\end{proof}

\begin{remark}
	\label{rem:GradDescent}
	
	When given finitely many observations $Z_1, \dots, Z_n$ sampled from $\mu$, the formula in \eqref{eqn:GradientE} suggests the following algorithm to construct a (finite sample) denoising estimator from the observations. In what follows we let
	\[ \mu_n:= \frac{1}{n} \sum_{i=1}^n \delta_{Z_i}  \]
	be the empirical measure of the observations.

	Set $k=0$, and initialize $\delta_k(Z_1), \dots, \delta_k(Z_n) \in \R^m$.
	
	Then do until a stopping criterion is satisfied:
	\begin{enumerate}
		\item Find $\tilde \psi$, optimal dual potential for the 2-OT problem between $\mu_n$ and the measure with density  $\frac{1}{n} \sum_{i=1}^n p(\cdot \mid \delta_k(Z_i))$. 
		\item Set, for $i=1, \dots, n$, $$\delta_{k+1}(Z_i) := \delta_k(Z_i)  - \lambda \left(  2( \delta_k(Z_i) - \overline{\theta}(Z_i)  ) +\frac{1}{2\tau} \int  \tilde \psi(z') \frac{\nabla_\theta p(z' \mid\delta_k(Z_i))}{p(z' \mid\delta_k(Z_i))} p(z' \mid\delta_k(Z_i)) \, dz'\right).$$     
		\item Set $k=k+1$.
	\end{enumerate}
	In the above, $\lambda>0$ is a time step parameter. Note that when the likelihood model is an exponential family of distributions, we can use
	Tweedie's formula (see Appendix~\ref{sec:Tweedie}) to estimate $\overline{\theta}(Z_i)$. The computation of $\tilde \psi$ can be carried out with an OT solver. We leave it for future work to explore the use of different solvers for computing the gradient of $\mathcal{E_\tau}$ in practical finite data settings.   \nc 
	
\end{remark}



\subsection{A Kantorovich relaxation of \eqref{eq:FModel} and recovery of $\delta^*$}
\label{sec:KantoRelax}

We now turn our attention to studying the existence of solutions to problem \eqref{eq:FModel}. To achieve this, we first introduce a suitable Kantorovich relaxation of \eqref{eq:FModel} for which we can prove existence of solutions using the direct method of the calculus of variations. Under Assumption \ref{assump:weak-cont} stated below, we will further characterize the structure of solutions of this relaxation and in particular show that any solution to \eqref{eqn:Relaxation} (see below) naturally induces a solution to the original problem \eqref{eq:FModel}. To define the desired Kantorovich relaxation, let us first introduce the set of admissible couplings 
\begin{equation}
\mathcal{A} := \left\{ \gamma \in \mathcal{P}({\R^d} \times \Omega \times {\R^d} \times {\R^d}) : \gamma_1 = \mu, \gamma_4 = \mu , \text{ and } \int p(\cdot \mid \theta) \,d\gamma_2(\theta)  = \gamma_3(\cdot) \right\};
\label{def:FeasibleSet}
\end{equation}
in the above display by $\gamma_k$, for $k=1,2,3,4$, we mean the $k$'th marginal of $\gamma$. We observe that the set $\A$ is determined by $\mu$ (the marginal distribution of observed variables) and $\{p(\cdot \mid \theta)\}_{\theta \in \Omega}$ (the likelihood model). We now introduce the problem:
\begin{equation}
\inf_{\gamma \in \A}  \int \cc_\tau(z_1, \theta, z_3, z_4) \, d\gamma(z_1, \theta, z_3, z_4), 
\label{eqn:Relaxation}
\end{equation}
where the cost function $\cc_\tau$ is defined as:
\begin{equation}
\cc_\tau(z_1, \theta, z_3, z_4) :=  | \theta - \overline{\theta} (z_1)|^2 + \frac{1}{2\tau} | z_3 - z_4 |^2. 
\label{eqn:CostRelaxation}
\end{equation}

\begin{remark}[Comparison with multimarginal OT]
	\label{rem:CompMOT}
	Problem \eqref{eqn:Relaxation} resembles a multimarginal optimal transport (MOT) problem (e.g., see \cite{Pass2015}) with four marginals, but differs from a standard MOT in the type of constraint that we put on the second and third marginals of the coupling $\gamma$.   
\end{remark}

We will make the following assumptions on our probabilistic model.

\begin{assumption}\label{assump:weak-cont} 
	We assume that the set $\Omega$ is a closed subset of $\R^m$. In addition, we assume that the family of probability measures $ \{ p(\cdot \mid \theta) \}_{\theta \in \Omega}$ is continuous in $\theta$ in the weak sense, i.e., if $\{\theta_n\}_{n \in \N}$ is a sequence in $\Omega$ converging to some $\theta \in \Omega$, then $p(\cdot \mid\theta_n)$ converges weakly to $p(\cdot \mid\theta)$. 
\end{assumption}

\begin{assumption}\label{assump:ContinuityThetaBar}
	We assume that the posterior mean $\overline{\theta}(z)$ is continuous for $\mu$-a.e.~$z \in \R^d$.
\end{assumption}

\begin{remark}[On the first part of Assumption \ref{assump:weak-cont}]
	In order to prove the existence of solutions to problem \eqref{eqn:Relaxation} we assume that $\Omega$ is a closed subset of $\R^m$ for simplicity. In case $\Omega$ is not closed, one can consider modifying the definition of problem \eqref{eqn:Relaxation} by changing all appearances of $\Omega$ with $\overline{\Omega}$, the closure of $\Omega$. This can be done if we assume that the family $\{ p(\cdot \mid \theta) \}_{\theta \in \Omega}$ can be extended to a family of distributions $\{ p(\cdot \mid \theta) \}_{\theta \in \overline{\Omega}}$ (not necessarily with densities w.r.t.~the Lebesgue measure) for which we still have the weak continuity property: if $\{\theta_n \}_n \subseteq  \overline{\Omega}$  and $\theta_n \rightarrow \theta$, then $p(\cdot \mid \theta_n)$ converges weakly to  $p(\cdot \mid \theta)$. For instance, this can be done in the normal scale mixture problem in Example \ref{ex:Norm-scale-Mix}.   
\end{remark}

\begin{remark}[On Assumption \ref{assump:ContinuityThetaBar}] We will also impose Assumption \ref{assump:ContinuityThetaBar} to guarantee the existence of solutions to the relaxation problem \eqref{eqn:Relaxation}. This assumption is mild and for example is satisfied when $\{ p(\cdot \mid \theta) \}_{\theta \in \Omega}$ is an exponential family of distributions under suitable assumptions (see Lemma~\ref{lem:Grad-Cvx} in Appendix~\ref{sec:Tweedie}). Indeed, in this case $\overline{\theta}(\cdot)$ coincides with the gradient of a real-valued convex function. As, by Alexandrov's theorem, a convex function is (Lebesgue) a.e.~twice differentiable, its gradient is (Lebesgue) a.e.~continuous. Since $\mu$ has a density w.r.t.~the Lebesgue measure, it then follows that $\overline \theta(z)$ is indeed continuous for $\mu$-a.e.~$z \in \R^d$. 
\end{remark}

As stated in the next theorem, problem \eqref{eqn:Relaxation} admits minimizers. More importantly, all minimizers of this problem possess a convenient structure that we later use to prove existence of solutions to problem \eqref{eq:FModel}.

\begin{theorem}
	\label{lem:StructureGammastar}
	Suppose Assumptions~\ref{assump:SecondMomentsG},~\ref{assump:SecondMomentsmu},~\ref{assump:weak-cont} and~\ref{assump:ContinuityThetaBar} hold. Then there exist solutions to \eqref{eqn:Relaxation}. Moreover, if $\gamma^*$ is a solution of \eqref{eqn:Relaxation}, then $\gamma_{12}^*$, the projection of $\gamma$ onto the first two coordinates, is a solution to the problem
	\[ \inf_{\pi \in \Gamma(\gamma_1^*, \gamma_2^* )} \int |\theta - \overline{\theta}(z)|^2 \, d\pi(z, \theta). \]
	In turn, under Assumption \ref{assump:PosteriorMean}, $\gamma_{12}^*$ must have the form $\gamma_{12}^* = ({\rm Id} \times  \delta_{\gamma^*})_{\sharp } \mu$ for $\delta_{\gamma^*} \in \mathbb{L}^2({\R^d} : \R^m; \mu)$ the unique solution to the problem: 
	\begin{equation}
	\label{eq:TransportProblemCharacterizingRelaxation}
	\inf_{\delta: \delta_{\sharp} \mu= \gamma_2^* } \int |\delta(z) - \overline{\theta}(z)|^2 \, d \mu(z).  
	\end{equation}
\end{theorem}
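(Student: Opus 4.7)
The plan is to apply the direct method of the calculus of variations. First I would check that $\A$ is nonempty and contains an element with finite cost; the product measure $\mu \otimes G^* \otimes \mu \otimes \mu$ works, since $\int p(\cdot\mid \theta)\,dG^*(\theta) = f_{G^*}$ is the density of $\mu$, and its cost is finite because both $G^*$ and $\bar{\theta}_\sharp \mu$ have finite second moments (Assumption \ref{assump:SecondMomentsG} and Jensen's inequality). Taking a minimizing sequence $\{\gamma_n\} \subset \A$, the first and fourth marginals are fixed to $\mu$ and hence tight. The uniform cost bound together with the finite second moment of $\bar{\theta}$ forces $\{\gamma_2^n\}$ to have uniformly bounded second moments, and therefore to be tight on the closed set $\Omega$. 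Tightness of $\{\gamma_3^n\}$ then follows because the image of any compact $K \subset \Omega$ under $\theta \mapsto p(\cdot \mid \theta)$ is weakly compact (and hence tight) in $\mathcal{P}(\R^d)$ by Assumption \ref{assump:weak-cont}, together with the tightness of $\{\gamma_2^n\}$. Extract a weak subsequential limit $\gamma^*$. The equalities $\gamma_1^* = \gamma_4^* = \mu$ are trivially preserved, and the integral constraint passes to the limit because, for any $f \in C_b(\R^d)$, the map $\theta \mapsto \int f(z')p(z'\mid\theta)\,dz'$ is bounded and continuous by Assumption \ref{assump:weak-cont}. Lower semicontinuity of the cost under weak convergence follows from Portmanteau applied to the nonnegative continuous cost $\cc_\tau$ (continuity of $\cc_\tau$ uses Assumption \ref{assump:ContinuityThetaBar}); hence $\gamma^*$ minimizes \eqref{eqn:Relaxation}.

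\textbf{Decoupling argument.} The key algebraic observation is that $\cc_\tau(z_1, \theta, z_3, z_4)$ splits as $|\theta - \bar{\theta}(z_1)|^2$, depending only on $(z_1, \theta)$, plus $\frac{1}{2\tau}|z_3 - z_4|^2$, depending only on $(z_3, z_4)$. Suppose, for contradiction, that $\gamma_{12}^*$ is not optimal for $\inf_{\pi \in \Gamma(\gamma_1^*, \gamma_2^*)} \int |\theta - \bar{\theta}(z)|^2\, d\pi$. Then there exists $\tilde\pi_{12}$ with the same marginals but strictly smaller transport cost, and the product $\tilde\gamma := \tilde\pi_{12} \otimes \gamma_{34}^*$ still lies in $\A$ because the constraint $\int p(\cdot\mid\theta)\, d\gamma_2 = \gamma_3$ involves the second and third marginals only individually, and both are unchanged. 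The $\cc_\tau$-cost of $\tilde\gamma$ is then strictly smaller than that of $\gamma^*$, contradicting optimality. Hence $\gamma_{12}^*$ solves the stated reduced transport problem.

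\textbf{Brenier structure under Assumption \ref{assump:PosteriorMean}.} Finally, push $\gamma_{12}^*$ forward under $\Phi: (z, \theta) \mapsto (\bar{\theta}(z), \theta)$ to obtain $\tilde\gamma^* \in \Gamma(\bar{\theta}_\sharp \mu, \gamma_2^*)$ with cost $\int |\eta - y|^2\, d\tilde\gamma^*(y, \eta)$ equal to the reduced cost of $\gamma_{12}^*$. A short disintegration argument shows that $\tilde\gamma^*$ is itself optimal for the $2$-OT problem between $\bar{\theta}_\sharp \mu$ and $\gamma_2^*$: any coupling of lower cost at this level can be lifted back to $\Gamma(\mu, \gamma_2^*)$ by disintegrating in the $y$-variable and stitching with $\mu$, producing a strictly better competitor for $\gamma_{12}^*$. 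Under Assumption \ref{assump:PosteriorMean}, $\bar{\theta}_\sharp \mu$ is absolutely continuous w.r.t.~Lebesgue measure, so Brenier's theorem (Theorem \ref{thm:Brenier}) yields a unique Brenier map $T^* = \nabla \varphi^*$ pushing $\bar{\theta}_\sharp \mu$ to $\gamma_2^*$ and a unique optimal coupling $(\mathrm{Id} \times T^*)_\sharp \bar{\theta}_\sharp \mu$; hence $\tilde\gamma^* = (\mathrm{Id} \times T^*)_\sharp \bar{\theta}_\sharp \mu$. Since the set $\{(z,\theta): \theta \neq T^*(\bar{\theta}(z))\}$ coincides with $\Phi^{-1}$ of its $\Phi$-image, its $\gamma_{12}^*$-measure equals the $\tilde\gamma^*$-measure of that image, which is zero. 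Consequently $\gamma_{12}^* = (\mathrm{Id} \times \delta_{\gamma^*})_\sharp \mu$ with $\delta_{\gamma^*} := T^* \circ \bar{\theta}$, and uniqueness of $\delta_{\gamma^*}$ in $\mathbb{L}^2(\R^d : \R^m; \mu)$ as a solution of \eqref{eq:TransportProblemCharacterizingRelaxation} follows from the uniqueness in Brenier's theorem. The main technical obstacle I anticipate lies in the first step, namely verifying the closedness of $\A$ under weak convergence and the tightness of $\{\gamma_3^n\}$, both of which depend crucially on the weak continuity in Assumption \ref{assump:weak-cont}.
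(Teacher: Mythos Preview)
Your three-step outline---direct method for existence, cost-splitting to show $\gamma_{12}^*$ is optimal for its own marginal problem, and Brenier's theorem (via the pushforward by $(z,\theta)\mapsto(\bar\theta(z),\theta)$) for the final structural statement---is correct and matches the paper's proof almost exactly. Two small remarks. First, Assumption~\ref{assump:ContinuityThetaBar} only gives continuity of $\bar\theta$ at $\mu$-a.e.\ $z$, so $\cc_\tau$ is not globally continuous and Portmanteau does not apply as stated; the paper fixes this by observing that every $\gamma^n$ and the limit $\gamma^*$ assign full mass to $B\times\Omega\times\R^d\times\R^d$, where $B$ is the $\mu$-full-measure continuity set of $\bar\theta$, and then invokes a lower-semicontinuity result for costs continuous on a common full-measure set. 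Second, your tightness argument for $\{\gamma_3^n\}$ is valid but more elaborate than needed: the paper simply bounds $\int|z_3|^2\,d\gamma_3^n \le 2\int|z_3-z_4|^2\,d\gamma^n + 2\int|z_4|^2\,d\mu$, which is controlled by the cost bound and the second moment of $\mu$.
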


Theorem \ref{lem:StructureGammastar} is proved in Section~\ref{pf:Theo-3.2}, and, as stated earlier, will be used to deduce the existence of solutions of \eqref{eq:FModel}. Precisely, as we state in Theorem \ref{cor:StructureSolutions} below, the existence of solutions of \eqref{eq:FModel} follows from the equivalence between problems \eqref{eq:FModel} and \eqref{eqn:Relaxation}. To describe this equivalence, we introduce some notation first.

Given $\delta \in \mathbb{L}^2({\R^d} : \R^m; \mu)$, let $\pi^\delta_{34}$ be a $2$-OT plan between $\mu_\delta $ (as defined in~\eqref{eq:mu-delta}) and $\mu$. Using $\pi^\delta_{34}$, we define $\gamma_\delta$ as the measure which acts on an arbitrary test function $\phi: {\R^d} \times \Omega \times {\R^d} \times {\R^d} \rightarrow \R$ according to
\begin{equation}
\label{eqn:Delta}
\int \phi (z_1, \theta, z_3, z_4) \, d \gamma_\delta (z_1,\theta, z_3,z_4) =  \int \phi (z_4, \delta(z_4), z_3, z_4) \, d \pi^\delta_{34}(z_3,z_4). 
\end{equation}
In simple terms, to sample from $\gamma_\delta$ it is sufficient to sample $(z_3, z_4) \sim \pi_{34}^\delta $ and then set $z_1=z_4$ and $\theta= \delta (z_4)$. Notice that $\gamma_\delta \in \A$. The proof of Theorem~\ref{cor:StructureSolutions} below can be found in Section~\ref{pf:Theo-3.3}.

\begin{theorem}
	\label{cor:StructureSolutions}
	Under Assumptions \ref{assump:SecondMomentsG},  \ref{assump:PosteriorMean}, \ref{assump:SecondMomentsmu} ,\ref{assump:weak-cont}, and \ref{assump:ContinuityThetaBar} the following properties hold:
	\begin{enumerate}
		\item Let $\gamma^*$ be any solution to \eqref{eqn:Relaxation}. Then the map $\delta_{\gamma^*}$ for which $\gamma_{12}^* = ({\rm Id} \times  \delta_{\gamma^*})_{\sharp } \mu$ is a solution to \eqref{eq:FModel}. In particular, thanks to Theorem \ref{lem:StructureGammastar}, there exist solutions to \eqref{eq:FModel}.
		\item Conversely, if $\tilde \delta$ is a solution to \eqref{eq:FModel}, then $\gamma_{\tilde \delta} \in \mathcal{P}({\R^d} \times \Omega \times {\R^d} \times {\R^d}) $ defined as in \eqref{eqn:Delta} for $\delta=\tilde \delta$ is a solution to \eqref{eqn:Relaxation}.
		
	\end{enumerate}
\end{theorem}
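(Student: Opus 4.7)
The plan is to establish a tight bridge between the original functional $\mathcal{E}_\tau$ in \eqref{eq:FModel} and the linear relaxation \eqref{eqn:Relaxation} by computing the value of the relaxed objective on two carefully chosen families of couplings, and then reading off the equivalence via Theorem \ref{lem:StructureGammastar}.

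First I would compute $\int \cc_\tau\,d\gamma_\delta$ for an arbitrary $\delta \in \mathbb{L}^2(\R^d : \R^m;\mu)$. Since $\gamma_\delta$ is supported on tuples $(z_4, \delta(z_4), z_3, z_4)$ with $(z_3, z_4)\sim \pi^\delta_{34}$ an optimal $2$-OT plan between $\mu_\delta$ and $\mu$, a direct substitution yields
\begin{equation*}
\int \cc_\tau \, d\gamma_\delta \;=\; \int |\delta(z)-\overline{\theta}(z)|^2\,d\mu(z) + \frac{1}{2\tau}\, W_2^2(\mu_\delta,\mu).
\end{equation*}
Combined with the bias-variance decomposition $\E[|\delta(Z)-\Theta|^2] = \int |\delta-\overline{\theta}|^2\, d\mu + R_{\text{Bayes}}$ (as used in Section \ref{sec:SqError}), this gives $\int \cc_\tau\,d\gamma_\delta = \mathcal{E}_\tau(\delta) - R_{\text{Bayes}}$. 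Membership $\gamma_\delta \in \A$ is immediate from the definition of $\mu_\delta$, since $\gamma_{\delta,2}=\delta_\sharp \mu$ and $\gamma_{\delta,3}=\mu_\delta$ makes the marginal equation hold. In particular $\inf_{\gamma\in\A}\int \cc_\tau\,d\gamma \leq \inf_\delta \mathcal{E}_\tau(\delta) - R_{\text{Bayes}}$.

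For the reverse direction I would invoke Theorem \ref{lem:StructureGammastar}. Let $\gamma^*$ be any solution of \eqref{eqn:Relaxation}; by that theorem $\gamma^*_{12}=(\mathrm{Id}\times \delta_{\gamma^*})_\sharp \mu$, and by the definition of $\A$ we have $\gamma^*_3 = \mu_{\delta_{\gamma^*}}$ and $\gamma^*_4 = \mu$. The crucial observation is that $\cc_\tau$ splits additively into a $(z_1,\theta)$-piece and a $(z_3,z_4)$-piece, while the constraints defining $\A$ never couple the 12-block with the 34-block. Hence, keeping $\gamma^*_{12}$ and the marginals $\gamma^*_3,\gamma^*_4$ fixed and replacing $\gamma^*_{34}$ by any $2$-OT plan between $\mu_{\delta_{\gamma^*}}$ and $\mu$ (e.g., by forming the independent glueing $\gamma^*_{12}\otimes \pi^{\mathrm{OT}}_{34}$) produces an element of $\A$ whose cost is no larger than that of $\gamma^*$. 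Optimality of $\gamma^*$ therefore forces the $(z_3,z_4)$-piece of its cost to equal $W_2^2(\mu_{\delta_{\gamma^*}},\mu)$, and hence
\begin{equation*}
\int \cc_\tau\,d\gamma^* \;=\; \int|\delta_{\gamma^*}-\overline{\theta}|^2\,d\mu + \frac{1}{2\tau}\,W_2^2(\mu_{\delta_{\gamma^*}},\mu) \;=\; \mathcal{E}_\tau(\delta_{\gamma^*}) - R_{\text{Bayes}}.
\end{equation*}

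Putting the two estimates together gives $\inf_{\gamma\in\A}\int\cc_\tau\,d\gamma = \inf_\delta \mathcal{E}_\tau(\delta) - R_{\text{Bayes}}$, so $\delta_{\gamma^*}$ minimizes $\mathcal{E}_\tau$; this proves part 1, and existence of solutions to \eqref{eq:FModel} follows from the existence of $\gamma^*$ guaranteed by Theorem \ref{lem:StructureGammastar}. For part 2, given any solution $\tilde\delta$ of \eqref{eq:FModel}, the first computation gives $\int \cc_\tau\,d\gamma_{\tilde\delta} = \mathcal{E}_\tau(\tilde\delta) - R_{\text{Bayes}} = \inf_{\gamma\in\A}\int\cc_\tau\,d\gamma$, so $\gamma_{\tilde\delta}$ attains the minimum in \eqref{eqn:Relaxation}. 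The one subtle point is the glueing step: one must check that swapping $\gamma^*_{34}$ for an OT coupling preserves membership in $\A$. This works because the marginal equation $\int p(\cdot\mid\theta)\,d\gamma_2(\theta)=\gamma_3(\cdot)$ depends only on $(\gamma_2,\gamma_3)$, both of which are unchanged by the swap.
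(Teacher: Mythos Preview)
Your proposal is correct and follows essentially the same approach as the paper: compute $\int \cc_\tau\,d\gamma_\delta = \mathcal{E}_\tau(\delta)-R_{\text{Bayes}}$ to get one inequality, then use the structure of an optimizer $\gamma^*$ from Theorem~\ref{lem:StructureGammastar} to get the reverse inequality and conclude. The only cosmetic difference is that you redo the ``swap $\gamma^*_{34}$ for an OT plan'' argument in place, whereas the paper simply cites this from the proof of Theorem~\ref{lem:StructureGammastar}, where it was already shown that $\gamma^*_{34}$ is a $2$-OT plan between $\gamma^*_3$ and $\gamma^*_4$.
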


\begin{remark}[Equivalence between~\eqref{eqn:Relaxation} and~\eqref{eq:FModel}]
	Theorem \ref{cor:StructureSolutions} captures the equivalence between problems \eqref{eqn:Relaxation} and \eqref{eq:FModel}: from a solution $\gamma^*$ to \eqref{eqn:Relaxation} (which exists by the first part of Theorem \ref{lem:StructureGammastar}) we can obtain a map $\delta^*$ that is a solution to \eqref{eq:FModel}. Conversely, from a solution to \eqref{eq:FModel} we can construct a solution to \eqref{eqn:Relaxation}. 
	Interestingly, the relaxation \eqref{eqn:Relaxation} provides an avenue for designing alternative numerical methods for optimizing \eqref{eq:FModel} that do not rely on the gradient descent strategy in the $\mathbb{L}^2({\R^d} : \R^m; \mu)$ space suggested at the beginning of Section \ref{sec:DenoisingObservablePenalty}. Notice that \eqref{eqn:Relaxation} is a linear optimization problem, which, as discussed in Remark \ref{rem:CompMOT}, resembles an MOT problem. For this reason we expect to be able to use computational OT techniques to solve~\eqref{eq:FModel}.
\end{remark}

\begin{remark}
	We do not claim uniqueness of solutions of \eqref{eq:FModel}. This non-uniqueness may not be surprising, since problem \eqref{eq:FModel} is in general non-convex in $\delta$. 
\end{remark}

Next, we discuss the behavior of solutions to problem \eqref{eq:FModel} as the parameter $\tau\rightarrow 0$. We show in Theorem~\ref{thm:Soft-F} (see Section~\ref{pf:Theo-3.4} for its proof) that under the identifiability assumption stated below, we can recover $\delta^*$, the OT-based denoiser, from the solutions of \eqref{eq:FModel}.

\begin{assumption}\label{assump:Identifiability}
	The following identifiability condition on $\{ p(\cdot \mid \theta)  \}_{\theta \in \Omega}$ holds: If $\int_{\Omega} p(\cdot \mid \theta) \, dG(\theta) = \int_{\Omega} p(\cdot \mid \theta) \, dG'(\theta) $ for two probability measures $G$  and $G'$ over $\Omega$, then $G = G'$.
\end{assumption}

\begin{theorem}\label{thm:Soft-F}
	Let $\{ \tau_n\}_{n \ge 1}$ be a sequence of positive numbers converging to $0$. Let $\delta_n^*$ be a solution to problem \eqref{eq:FModel} with $\tau=\tau_n$ (we know solutions exist thanks to Theorem~\ref{cor:StructureSolutions}). Then, under the same assumptions as in Theorem \ref{cor:StructureSolutions} and the additional Assumption \ref{assump:Identifiability},  $\delta_n^*$ converges in $\mathbb{L}^2({\R^d} : \R^m; \mu)$ to $\delta^*$ as defined in Theorem \ref{thm:Main1}. In other words, 
	\[ \lim_{n \rightarrow  \infty} \int |\delta_n^*(z)- \delta^*(z) |^2 \,d\mu(z) =0.  \]
\end{theorem}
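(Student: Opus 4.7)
The plan is to exploit the Kantorovich relaxation \eqref{eqn:Relaxation}: I lift each $\delta_n^*$ to its associated coupling $\gamma_n^* := \gamma_{\delta_n^*}$ via \eqref{eqn:Delta}, which by Theorem~\ref{cor:StructureSolutions} is a minimizer of \eqref{eqn:Relaxation} at $\tau = \tau_n$. The point is that the admissible set $\A$ has narrow-topology compactness that is unavailable in the weak $\mathbb{L}^2(\mu)$ topology (where $\delta \mapsto W_2^2(\mu_\delta,\mu)$ fails to be lower semicontinuous). Strong $L^2$ convergence of the $\delta_n^*$'s will then be extracted from narrow convergence of the $\gamma_n^*$'s plus a norm-convergence argument.

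First, I use $\delta^*$ as a competitor: since $\delta^*_\sharp\mu = G^*$, we have $\mu_{\delta^*} = \int p(\cdot\mid\theta)\,dG^*(\theta) = \mu$, so the penalty vanishes and $\mathcal{E}_{\tau_n}(\delta^*) = R_* + R_{\mathrm{Bayes}}$, where $R_* := \int |\delta^* - \overline{\theta}|^2\,d\mu$ is the minimum value in~\eqref{eqn:Kantorovich}. Optimality of $\delta_n^*$ together with the bias-variance decomposition of Section~\ref{sec:SqError} yields
\begin{equation*}
\int |\delta_n^*(z) - \overline{\theta}(z)|^2\,d\mu(z) \leq R_* \qquad \text{and} \qquad W_2^2(\mu_{\delta_n^*}, \mu) \leq 2\tau_n R_* \to 0.
\end{equation*}
Combined with Assumption~\ref{assump:SecondMomentsG} and $\|\overline{\theta}\|_{\mathbb{L}^2(\mu)} < \infty$ (from Jensen), these bounds imply that all four marginals of $\gamma_n^*$ have uniformly bounded second moments: the first and fourth are $\mu$; the third is $\mu_{\delta_n^*}$, controlled through the $W_2$ bound; and the second is $\delta^*_{n,\sharp}\mu$, controlled by expanding the first inequality. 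Thus $\{\gamma_n^*\}$ is tight and, along a subsequence, converges narrowly to some $\gamma_\infty \in \mathcal{P}(\R^d \times \Omega \times \R^d \times \R^d)$.

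Next I identify $\gamma_\infty$. Its first and fourth marginals remain $\mu$; the bound $\int |z_3-z_4|^2\,d\gamma_n^* \to 0$ forces $z_3 = z_4$ $\gamma_\infty$-a.s., so $\gamma_{\infty,3} = \gamma_{\infty,4} = \mu$. To pass to the limit in the constraint $\int p(\cdot\mid\theta)\,d\gamma_{n,2}^*(\theta) = \gamma_{n,3}^*$, I note that for any bounded continuous $f$ on $\R^d$, Assumption~\ref{assump:weak-cont} makes $F(\theta) := \int f(z')p(z'\mid\theta)\,dz'$ bounded continuous on $\Omega$, so narrow convergence $\gamma_{n,2}^* \rightharpoonup \gamma_{\infty,2}$ yields $\int p(\cdot\mid\theta)\,d\gamma_{n,2}^*(\theta) \rightharpoonup \int p(\cdot\mid\theta)\,d\gamma_{\infty,2}(\theta)$ narrowly, which must equal $\mu$ by uniqueness of narrow limits. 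Assumption~\ref{assump:Identifiability} then forces $\gamma_{\infty,2} = G^*$, so $\gamma_{\infty,12}$ is a coupling of $(\mu, G^*)$. Using Assumption~\ref{assump:ContinuityThetaBar} (after correcting $\overline{\theta}$ on a $\mu$-null set so that $(z,\theta)\mapsto|\theta-\overline{\theta}(z)|^2$ is lsc everywhere), lower semicontinuity under narrow convergence and the first basic estimate give $\int |\theta - \overline{\theta}(z_1)|^2\,d\gamma_\infty \leq R_*$; but Theorem~\ref{thm:Main1} states that any coupling of $(\mu, G^*)$ has cost $\geq R_*$ with equality achieved uniquely by $(\mathrm{Id}\times\delta^*)_\sharp\mu$, hence $\gamma_{\infty,12} = (\mathrm{Id}\times\delta^*)_\sharp\mu$.

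Finally I upgrade to strong $\mathbb{L}^2$ convergence. Narrow convergence $\gamma_{n,12}^* \rightharpoonup (\mathrm{Id}\times\delta^*)_\sharp\mu$ together with the uniform second-moment bound on $\gamma_{n,2}^*$ gives, via a standard truncation/uniform-integrability argument against bounded continuous test functions $\phi(z)$, weak $\mathbb{L}^2(\mu)$ convergence $\delta_n^* \rightharpoonup \delta^*$ (the test class extends to all of $\mathbb{L}^2(\mu)$ by density). Expanding
\begin{equation*}
\int |\theta - \overline{\theta}(z_1)|^2\,d\gamma_n^* = \|\delta_n^*\|_{\mathbb{L}^2(\mu)}^2 - 2\int\langle\delta_n^*,\overline{\theta}\rangle\,d\mu + \|\overline{\theta}\|_{\mathbb{L}^2(\mu)}^2
\end{equation*}
and combining convergence of the left-hand side to $R_*$ (from the two-sided bound established above) with weak convergence of the cross term against the fixed $\mathbb{L}^2$ function $\overline{\theta}$ yields $\|\delta_n^*\|_{\mathbb{L}^2(\mu)} \to \|\delta^*\|_{\mathbb{L}^2(\mu)}$. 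Weak plus norm convergence in a Hilbert space implies strong convergence, and uniqueness of the limit $\delta^*$ upgrades the subsequential statement to the full sequence. The main obstacle throughout is the failure of weak lower semicontinuity of $\delta \mapsto W_2^2(\mu_\delta,\mu)$; the remedy is precisely the lift to couplings, where Assumption~\ref{assump:weak-cont} lets one pass to the limit in the marginal constraint and Assumption~\ref{assump:Identifiability} recovers the latent measure $G^*$ that the observable-space penalty cannot identify on its own.
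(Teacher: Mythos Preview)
Your proof is correct and follows essentially the same route as the paper's own argument: lift to the relaxed couplings $\gamma^n = \gamma_{\delta_n^*}$, use tightness and Assumption~\ref{assump:weak-cont} to pass to a narrow limit in $\A$, invoke identifiability to force $\gamma_{\infty,2}=G^*$, conclude via Theorem~\ref{thm:Main1} that $\gamma_{\infty,12}=(\mathrm{Id}\times\delta^*)_\sharp\mu$, and finish with weak-plus-norm $\Rightarrow$ strong in the Hilbert space. The only notable differences are cosmetic: you use $\delta^*$ itself as the single competitor (the paper tests against an arbitrary $\pi_{12}\in\Gamma(\mu,G^*)$ glued with the diagonal, which yields the same inequality), and you pass directly from narrow convergence of $\gamma_{n,12}^*$ plus uniform integrability to weak $\mathbb{L}^2$ convergence, whereas the paper routes this through an intermediate convergence-in-$\mu$-measure lemma before invoking the uniform second-moment bound.
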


\begin{remark}[Non-identifiable version of Theorem~\ref{thm:Soft-F}]
	\label{rem:NonIdentifiable}
	An inspection of the proof of Theorem \ref{thm:Soft-F} reveals that if we drop Assumption \ref{assump:Identifiability}, then we can conclude that the set of accumulation points of $\{  \delta_n^* \}_{n \in \N}$ in the (strong) $\mathbb{L}^2({\R^d}: \R^m; \mu)$ topology is contained in the set of minimizers of the problem
	\[\min_{\delta \: : \:   \mu_\delta= \mu  }  \E_{(Z, \Theta)\sim P_{Z,\Theta} } \left[ | \delta(Z) - \Theta|^2  \right] . \]
	In other words, from the family of problems \eqref{eq:FModel} we can find a map $\delta$ with the smallest risk attainable within the set of maps that consistently reproduce the distribution of observations $\mu$. 
	
\end{remark}

\begin{remark}
Theorem \ref{thm:Soft-F} suggests taking small values of $\tau$ in \eqref{eq:FModel} (or in its equivalent formulation \eqref{eqn:Relaxation}) to recover the OT-based denoiser. However, we anticipate a certain computational hardness for the optimization problem \eqref{eq:FModel} when $\tau$ is small. To better appreciate this, observe that small values of $\tau$ in the equivalent formulation \eqref{eqn:Relaxation} essentially enforce the hard constraint
\[ \int p(\cdot |\theta)d \gamma_2(\theta)= \mu, \]
 which is equivalent to solving a deconvolution problem.

\end{remark}
\nc

\nc 

\section{Proofs of main results from Section \ref{sec:DenoisingLatentPenalty}}
\label{sec:Proofs}

\subsection{Proof of Theorem \ref{thm:Main1}}
In order to prove Theorem \ref{thm:Main1}, we first present some preliminary results relating solutions of problem \eqref{eqn:Kantorovich} (with the cost function as in~\eqref{eq:c_z_y}) and its dual with solutions of the problem 
\begin{equation}
\min_{\pi \in \Gamma(\overline{\theta}_{\sharp} \mu, G^*)} \int \int |\theta -\vartheta|^2 d\pi(\theta, \vartheta)
\label{eq:KantorovichPushForwrad}
\end{equation}
and its dual.

\begin{prop}
	\label{lem:RelationshipKantorovic}
	Let $\tilde \pi$ and $(\tilde{\phi}, \tilde \psi )$ be solutions to \eqref{eq:KantorovichPushForwrad} and its dual, respectively. Suppose that Assumption \ref{assump:SecondMomentsG} holds. Then \eqref{eq:KantorovichPushForwrad} = \eqref{eqn:Kantorovich}. Furthermore, the functions $(\tilde \phi \circ \overline{\theta}, \tilde \psi )  $ form a solution pair for the dual of \eqref{eqn:Kantorovich}. In addition,  the coupling $\pi$ defined according to
	\begin{equation}
	d \pi(z, \theta) :=   d \tilde{\pi}( \theta \mid \overline{\theta} (z)) \,  d\mu(z) 
	\label{eqn:FromPi}
	\end{equation}
	is a solution for \eqref{eqn:Kantorovich}; here, by $\tilde{\pi}( \cdot \mid \vartheta) $ we mean the conditional distribution of $\theta$ given $\vartheta$ when $(\theta, \vartheta) \sim \widetilde{\pi}$.
\end{prop}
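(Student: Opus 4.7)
The core idea is to show mutual inequality between the optimal values of \eqref{eqn:Kantorovich} and \eqref{eq:KantorovichPushForwrad} by explicit constructions in each direction, and then use these constructions together with Kantorovich duality to produce the claimed optimal primal coupling and dual pair for \eqref{eqn:Kantorovich}.

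First I would establish $\eqref{eqn:Kantorovich} \geq \eqref{eq:KantorovichPushForwrad}$. For any $\pi \in \Gamma(\mu, G^*)$, the pushforward $(\overline{\theta} \times \mathrm{Id})_{\sharp} \pi$ lies in $\Gamma(\overline{\theta}_{\sharp} \mu, G^*)$, and a change of variables gives
\[
\int c_{G^*}(z, \vartheta)\, d\pi(z,\vartheta) \;=\; \int |\theta - \vartheta|^2\, d(\overline{\theta} \times \mathrm{Id})_{\sharp}\pi(\theta,\vartheta) \;\geq\; \eqref{eq:KantorovichPushForwrad}.
\]
Infimizing over $\pi$ yields the first inequality. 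For the reverse inequality, I would check that the $\pi$ defined in \eqref{eqn:FromPi} via disintegration of $\tilde\pi$ along its first marginal lies in $\Gamma(\mu, G^*)$: the first marginal is $\mu$ by construction, and for the second marginal, for any bounded Borel $f$, using $d\tilde{\pi}(\theta',\theta) = d\tilde{\pi}(\theta \mid \theta')\, d(\overline{\theta}_{\sharp}\mu)(\theta')$,
\[
\int f(\theta)\, d\pi(z,\theta) \;=\; \int\!\!\int f(\theta)\, d\tilde{\pi}(\theta \mid \overline{\theta}(z))\, d\mu(z) \;=\; \int f(\theta)\, d\tilde{\pi}(\theta',\theta) \;=\; \int f\, dG^*,
\]
since the second marginal of $\tilde\pi$ is $G^*$. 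A parallel disintegration computation with $f(\theta',\vartheta) = |\theta' - \vartheta|^2$ gives
\[
\int c_{G^*}(z,\vartheta)\, d\pi(z,\vartheta) \;=\; \int |\theta' - \vartheta|^2\, d\tilde{\pi}(\theta',\vartheta) \;=\; \eqref{eq:KantorovichPushForwrad},
\]
which combined with the first inequality proves both that the two values coincide and that $\pi$ is optimal for \eqref{eqn:Kantorovich}.

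For the dual part, I would observe that if $(\tilde\phi,\tilde\psi)$ satisfies $\tilde\phi(\theta') + \tilde\psi(\vartheta) \leq |\theta' - \vartheta|^2$ $\overline{\theta}_{\sharp}\mu$-a.e.\ and $G^*$-a.e., then substituting $\theta' = \overline{\theta}(z)$ yields $\tilde\phi(\overline{\theta}(z)) + \tilde\psi(\vartheta) \leq c_{G^*}(z,\vartheta)$ $\mu$-a.e.\ and $G^*$-a.e.; moreover $\tilde\phi \circ \overline{\theta} \in \mathbb{L}^1(\mu)$ since $\tilde\phi \in \mathbb{L}^1(\overline{\theta}_{\sharp}\mu)$. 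Hence $(\tilde\phi \circ \overline{\theta}, \tilde\psi)$ is admissible for the dual of \eqref{eqn:Kantorovich}, and its value equals
\[
\int \tilde\phi\, d(\overline{\theta}_{\sharp}\mu) + \int \tilde\psi\, dG^* \;=\; \eqref{eq:KantorovichPushForwrad} \;=\; \eqref{eqn:Kantorovich}
\]
by Kantorovich duality applied to \eqref{eq:KantorovichPushForwrad} (Theorem 5.10 of \cite{Villani}), giving optimality.

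The main technical obstacle I anticipate is checking the hypotheses needed for the disintegration and for strong duality: both OT problems need finite total cost, which follows since $G^*$ has finite second moment by Assumption \ref{assump:SecondMomentsG} and $\overline{\theta}_{\sharp}\mu$ does too by Jensen's inequality (as $\overline{\theta}(Z) = \E[\Theta \mid Z]$ and $\E|\Theta|^2 < \infty$). The disintegration $\{\tilde\pi(\cdot \mid \theta')\}_{\theta'}$ exists because $\R^m$ is Polish and $\tilde\pi$ is a Borel probability measure, so all the integrations above are justified by Fubini--Tonelli and the standard disintegration theorem.
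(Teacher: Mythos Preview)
Your proposal is correct. The organization differs slightly from the paper's own proof: the paper starts from the duality equality for \eqref{eq:KantorovichPushForwrad}, namely $\int \tilde\phi\, d(\overline{\theta}_\sharp\mu) + \int \tilde\psi\, dG^* = \int |\theta-\vartheta|^2\, d\tilde\pi$, and simply rewrites both sides---the left via the change of variables $\theta' = \overline{\theta}(z)$, the right via the disintegration defining $\pi$---to obtain a single equality between a feasible dual value and a feasible primal value for \eqref{eqn:Kantorovich}, which by weak duality forces simultaneous optimality of $\pi$ and $(\tilde\phi\circ\overline{\theta},\tilde\psi)$ and equality of the two problems. You instead establish the primal equality $\eqref{eqn:Kantorovich}=\eqref{eq:KantorovichPushForwrad}$ directly via mutual inequality (the pushforward step $\eqref{eqn:Kantorovich}\geq\eqref{eq:KantorovichPushForwrad}$ is an extra ingredient not present in the paper's argument), and then treat the dual pair separately. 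Both routes rely on the same disintegration computation; yours is a bit more explicit about feasibility checks and the finiteness of second moments, while the paper's is shorter by letting duality do the work of both inequalities at once.
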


\begin{proof}
	Using the Kantorovich duality theorem (see Theorem 1.3 in \cite{Villani}), it follows that
	\[ \int \tilde \phi(\vartheta)d \overline{\theta}_{\sharp} \mu(\vartheta) + \int \tilde \psi(\theta)dG^*(\theta) =  \int  |\theta - \vartheta|^2 d\tilde \pi(\vartheta, \theta).  \]
	Now, the left-hand side of the above display can be written as
	\[  \int \tilde \phi(\ThetaBar(z))d \mu(z) + \int \tilde \psi(\theta)dG^*(\theta), \]
	while the right-hand side can be written, using the disintegration theorem, as
	\begin{eqnarray*}
		\int \left(\int |\theta - \vartheta|^2 d\tilde \pi(\theta|\vartheta) \right) d \ThetaBar_{\sharp}\mu(\vartheta) & = &\int \left(\int |\ThetaBar(z) - \theta|^2 d\tilde \pi(\theta|\ThetaBar(z)) \right)d\mu(z) \\
		& = & \int \int |\ThetaBar(z)- \theta|^2 d \pi(z,\theta).
	\end{eqnarray*}
	It follows that 
	\[  \int \tilde \phi\circ  \ThetaBar(z)d \mu(z) + \int \tilde \psi(\theta)dG^*(\theta)= \int |\ThetaBar(z)- \theta|^2 d\pi(z,\theta),  \]
	implying that $\pi$ and $(\tilde \phi \circ \overline{\theta}, \tilde \psi )  $ are solutions of \eqref{eqn:Kantorovich} and its dual, respectively. This computation also shows that $\eqref{eq:KantorovichPushForwrad} = \eqref{eqn:Kantorovich}$, as claimed.
\end{proof}

For the uniqueness statement in Theorem \ref{thm:Main1} we'll establish a converse statement to Proposition \ref{lem:RelationshipKantorovic}. Namely, we will prove that any solution to \eqref{eqn:Kantorovich} must have the form \eqref{eqn:FromPi}. We notice that without the additional Assumption \ref{assump:PosteriorMean} this converse statement may fail, as the next remark illustrates.

\begin{remark} 
	In general, a converse statement to Proposition~\ref{lem:RelationshipKantorovic} may not be true if Assumption~\ref{assump:PosteriorMean} does not hold (i.e., if $\overline{\theta}_{\sharp} \mu$ is not absolutely continuous w.r.t.~the Lebesgue measure), as the following example illustrates. Let $G^*$ be the uniform measure over the set $\Omega :=\{ 0,1,3,4\}$, and for every $\theta \in \Omega $, let $p(\cdot \mid \theta)$ be the uniform distribution on the interval $[0,1]$.  Then, we can see that $\mu$ is the uniform distribution on $[0,1]$ and $\overline{\theta}(\cdot) \equiv 2$, which implies that $\overline{\theta}_{\sharp} \mu = \delta_{2}$, the Dirac delta measure at the point 2. Since $\overline{\theta}_{\sharp} \mu$ is concentrated at a point, there is a unique solution $\hat{\pi}$ to problem \eqref{eq:KantorovichPushForwrad} (in fact, there is only one coupling between a Dirac delta measure and an arbitrary probability measure). However, as $\overline{\theta}(\cdot)$ is a constant, any coupling between $\mu = $Uniform[0,1] and the uniform distribution on $\Omega$ would have the same cost; hence there are actually multiple solutions to problem \eqref{eqn:Kantorovich}. 
\end{remark}



In what follows we let $\nu\in \mathcal{P}({\R^d}\times \R^m)$ be the joint distribution of $(Z,\overline{\theta}(Z))$ where $Z \sim \mu$. We use the disintegration theorem to write $\nu$ as
\begin{equation}
d \nu (z, \theta) = d \nu(z \mid \theta) \,d (\overline{\theta}_{\sharp} \mu )(\theta), \qquad \mbox{for} \;\; \theta \in \R^m, z \in {\R^d}.
\label{eqn:JointZTheta}
\end{equation}
Notice that the support of $\nu(\cdot \mid \theta)$ can be assumed to be contained in $\{ z \in {\R^d} \: : \: \overline{\theta}(z)= \theta\}$.

\begin{lemma}
	\label{lem:ForUniqueness}
	Let $\pi_0 \in \Gamma(\mu, G^*)$ and let $\hat{\pi}:=(\overline{\theta}, {\rm Id})_{\sharp} \pi_0 \; \in \Gamma(\bar{\theta}_\sharp \mu,G^*)$, where  $(\overline{\theta}, \mathrm{Id}): (z, \vartheta) \mapsto (\overline{\theta}(z), \vartheta )$. Suppose, in addition, that $\hat{\pi}$ is known to have the form $ \hat \pi = ({\rm Id} \times T)_{\sharp} (\overline{\theta}_{\sharp} \mu )$ for some map $T$. Then 
	\[ \pi_0(\cdot \mid z) = \delta_{T\circ \overline{\theta}(z)}(\cdot) \]
	for $\mu$-a.e.~$z$. In the above, $\pi_0(\cdot \mid z)$ stands for the conditional distribution of $\theta$ given $z$ when $(z, \theta) \sim \pi_0$. In particular,
	\[  \pi_0 = ({\rm Id} \times T \circ \overline{\theta})_{\sharp} \mu.    \]
\end{lemma}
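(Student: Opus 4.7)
The plan is to leverage the disintegration theorem in two ways: once to write $\pi_0$ conditionally on $z$, and once to compare this with the explicit structure of $\hat{\pi}$ forced by its being concentrated on the graph of $T$. The key observation is that pushing $\pi_0$ through $(\overline{\theta}, \mathrm{Id})$ averages the conditionals $\pi_0(\cdot \mid z)$ over the fibers of $\overline{\theta}$, and the hypothesis on $\hat{\pi}$ forces this average to be a Dirac mass, which in turn forces each conditional to be that same Dirac mass.

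First I would disintegrate $d\pi_0(z,\vartheta) = d\pi_0(\vartheta \mid z)\,d\mu(z)$ and, for any bounded continuous test function $\phi$, compute
\begin{equation*}
\int \phi(y,\vartheta)\,d\hat{\pi}(y,\vartheta) = \int\!\!\int \phi(\overline{\theta}(z),\vartheta)\,d\pi_0(\vartheta \mid z)\,d\mu(z).
\end{equation*}
Next, I would further disintegrate $\mu$ along $\overline{\theta}$ using $\nu$ from \eqref{eqn:JointZTheta}, i.e.\ $d\mu(z) = d\nu(z \mid y)\,d(\overline{\theta}_\sharp \mu)(y)$, and use that $\nu(\cdot \mid y)$ is supported in $\{z : \overline{\theta}(z)=y\}$, to rewrite the above as
\begin{equation*}
\int\!\!\int\!\!\int \phi(y,\vartheta)\,d\pi_0(\vartheta \mid z)\,d\nu(z \mid y)\,d(\overline{\theta}_\sharp \mu)(y).
\end{equation*}
On the other hand, the hypothesis $\hat{\pi}=(\mathrm{Id}\times T)_\sharp(\overline{\theta}_\sharp \mu)$ gives directly
\begin{equation*}
\int \phi(y,\vartheta)\,d\hat{\pi}(y,\vartheta) = \int \phi(y, T(y))\,d(\overline{\theta}_\sharp \mu)(y).
\end{equation*}

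By uniqueness of disintegration (applied along the first marginal $\overline{\theta}_\sharp \mu$), I obtain that for $\overline{\theta}_\sharp \mu$-a.e.\ $y$, the mixture measure $\int \pi_0(\cdot \mid z)\,d\nu(z \mid y)$ on $\R^m$ equals $\delta_{T(y)}$. The main obstacle — though mild — is the passage from a Dirac mixture to Dirac components: for any open set $U\subset \R^m$ with $T(y)\notin U$, the equation $\int \pi_0(U \mid z)\,d\nu(z \mid y) = 0$ forces $\pi_0(U \mid z) = 0$ for $\nu(\cdot \mid y)$-a.e.\ $z$; applying this to a countable family of open sets whose union is $\R^m \setminus \{T(y)\}$ (e.g.\ complements of closed balls shrinking to $T(y)$) yields $\pi_0(\cdot \mid z) = \delta_{T(y)}$ for $\nu(\cdot \mid y)$-a.e.\ $z$.

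Finally, since $\overline{\theta}(z)=y$ on the support of $\nu(\cdot \mid y)$, I can substitute $y = \overline{\theta}(z)$ and assemble the fiberwise statements via $d\mu(z) = d\nu(z \mid y)\,d(\overline{\theta}_\sharp \mu)(y)$ to conclude $\pi_0(\cdot \mid z) = \delta_{T\circ \overline{\theta}(z)}$ for $\mu$-a.e.\ $z$. The displayed identity $\pi_0 = (\mathrm{Id} \times T \circ \overline{\theta})_\sharp \mu$ then follows by integrating this conditional description against $\mu$.
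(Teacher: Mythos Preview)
Your proof is correct and follows essentially the same approach as the paper's: disintegrate $\pi_0$ with respect to $z$, further disintegrate $\mu$ along $\overline{\theta}$ via $\nu$, identify the mixture $\int \pi_0(\cdot \mid z)\,d\nu(z\mid y)$ with $\delta_{T(y)}$ by uniqueness of disintegration, and then pass from a Dirac mixture to Dirac components. The only cosmetic difference is that the paper extracts the Dirac components more directly by evaluating the mixture at the singleton $A=\{T(y)\}$ (so that $\int \pi_0(A\mid z)\,d\nu(z\mid y)=1$ forces $\pi_0(A\mid z)=1$ for $\nu(\cdot\mid y)$-a.e.\ $z$), avoiding your countable-open-sets argument.
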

\begin{proof}
	For $(\theta ,\tilde \theta ) \sim \hat{\pi}$ of the form $\hat{\pi}= ({\rm Id}, T)_{\sharp} ( \overline{\theta}_\sharp \mu) \; \in \Gamma(\bar{\theta}_\sharp \mu,G^*)$ it is clear that 
	\begin{equation}
	\hat{\pi}(\cdot \mid \theta) = \delta_{T(\theta)}
	\label{eqn:AuxhatPi}
	\end{equation}
	for $\overline{\theta}_{\sharp} \mu$-a.e. $\theta$. On the other hand, from the representation $\hat{\pi}= (\overline{\theta}, {\rm Id})_{\sharp} \pi_0$, for any bounded and measurable function $\phi: \R^m \times \R^m \to \R$ we have
	\begin{align*}
	\int_{\R^m} \int_{\R^m} \phi(\theta, \tilde \theta) \, d \hat{\pi}(\theta, \tilde \theta) &= \int_{\R^d} \int_{\R^m} \phi(\overline{\theta}(z), \tilde \theta) \, d {\pi}_0(z, \tilde \theta
	)
	\\&= \int_{\R^d} \left( \int_{{\R^m}} \phi(\overline{\theta}(z), \tilde \theta) \, d \pi_0(\tilde \theta \mid z) \right) d\mu(z)
	\\& = \int_{\R^d} \int_{\R^m} \left( \int_{{\R^m}} \phi(\theta, \tilde \theta) \, d \pi_0(\tilde \theta \mid z) \right) \, d \nu(z, \theta)
	\\& =\int_{\R^m}  \int_{\R^d} \int_{{\R^m}} \phi(\theta, \tilde \theta) \, d \pi_0(\tilde \theta \mid z)  \, d \nu( z \mid \theta ) \, d(\overline {\theta}_\sharp \mu )(\theta),
	\end{align*}
	where we recall that $\nu$ is the joint distribution of $(Z, \overline{\theta}(Z))$ for $Z \sim \mu$. From this computation and the uniqueness of conditional distributions in the disintegration theorem it follows that
	\[ \hat{\pi}(\cdot \mid  \theta) = \int_{\R^d} \pi_0(\cdot \mid z) \, d\nu(z \mid \theta),  \]
	for $\overline{\theta}_{\sharp}\mu$-a.e. $\theta$. That is, for any Borel measurable $A\subseteq {\R^m}$ we have
	\[  \hat{\pi}(A \mid \theta) = \int_{{\R^d}}\pi_0(A \mid z) \, d\nu(z \mid \theta).  \]
	Combining with \eqref{eqn:AuxhatPi}, it follows that for $\overline{\theta}_{\sharp}\mu$-a.e. $\theta$
	\[  \delta_{T(\theta)}(\cdot) =  \hat{\pi}(\cdot \mid \theta) = \int_{\R^d} \pi_0(\cdot \mid z) \,d\nu(z \mid \theta).  \]
	For a $\theta$ for which the above is true, we may take the singleton $A= \{  T(\theta)\}$ and conclude that 
	\[ 1=  \int_{\R^d} \pi_0(A \mid z) \,d\nu(z \mid \theta),  \]
	which implies that $\pi_0(A \mid z) = 1$ for $\nu(\cdot \mid \theta)$-a.e. $z$. That is,
	\[ \pi_0(\cdot  \mid z ) =\delta_{T(\theta)}. \]
	for $\nu(\cdot\mid\theta)$-a.e.~$z$. Finally, as discussed right after \eqref{eqn:JointZTheta}, for $z$ in the support of $\nu(\cdot \mid \theta)$ we have $\theta= \overline{\theta} (z)$. It then follows that for $\nu(\cdot \mid \theta)$-a.e. $z$ we have 
	\[ \pi_0(\cdot\mid z) = \delta_{T \circ \overline{\theta}(z)}.\]
	At this stage we can apply Fubini's theorem to conclude that 
	\[ \pi_0(\cdot \mid z) = \delta_{T \circ \overline{\theta}(z)}\]
	for $\mu$-a.e. $z\in {\R^d}$, completing in this way the proof.
\end{proof}

We are now ready to prove Theorem \ref{thm:Main1}.  

\begin{proof}[Proof of Theorem \ref{thm:Main1}]
	Under the assumption that $\overline{\theta}_{\sharp} \mu$ is absolutely continuous with respect to the Lebesgue measure, we can use Brenier's theorem (Theorem \ref{thm:Brenier}) to deduce that there exists a unique solution $\tilde{\pi}$ to \eqref{eq:KantorovichPushForwrad}, which has the form 
	\[ \tilde{\pi}= (\mathrm{Id} \times T)_{\sharp} (\overline{\theta}_{\sharp} \mu) \]
	for some measurable map $T$ of the form $T= \nabla \varphi $ for a convex function $\varphi$; existence of solutions to the dual of \eqref{eq:KantorovichPushForwrad} is guaranteed by Theorem 2.12 in \cite{Villani2003}. Further, from Brenier's theorem we also know that $T_\sharp (\overline{\theta}_{\sharp} \mu) = G^*$ and that $T$ minimizes the objective~\eqref{eq:OT_PushForward}. Proposition~\ref{lem:RelationshipKantorovic} then implies that
	\[ \pi := (\mathrm{Id} \times T\circ \overline{\theta})_\sharp  \mu \]
	is a solution of \eqref{eqn:Kantorovich}. 
	
	It remains to show that the obtained solution to~\eqref{eqn:Kantorovich} is unique.  To see this, suppose that $\pi_0$ is a solution of \eqref{eqn:Kantorovich}, and let $\hat{\pi}:=(\overline{\theta}, {\rm Id})_\sharp \pi_0   $. It follows that
	\[ \int |\theta - \vartheta|^2 \,d\hat \pi(\theta,\vartheta) =  \int |\bar{\theta}(z) - \vartheta|^2 \,d \pi_0(z, \vartheta)  
	=\eqref{eqn:Kantorovich} =\eqref{eq:KantorovichPushForwrad},   \]
	and thus $\hat \pi$ is a solution of \eqref{eq:KantorovichPushForwrad}; notice that the latter of the above equalities follows from Proposition \ref{lem:RelationshipKantorovic}. From this and the uniqueness of solutions to \eqref{eq:KantorovichPushForwrad}, by Assumption~\ref{assump:PosteriorMean}  as $\overline{\theta}_{\sharp} \mu$ is absolutely continuous w.r.t.~the Lebesgue measure, it follows that $\hat{\pi}= \tilde{\pi}$. Using the fact that $\hat{\pi}= (\overline{\theta} \times {\rm Id})_\sharp \pi_0 =  (\mathrm{Id} \times T)_{\sharp} (\overline{\theta}_{\sharp} \mu)  $ in Lemma \ref{lem:ForUniqueness} we can conclude that necessarily $ \pi_0 = (\mathrm{Id} \times T \circ \overline{\theta})_{\sharp} \mu, $ proving in this way the uniqueness of solutions to \eqref{eqn:Kantorovich}. 
\end{proof}

\begin{remark}
	\label{rem:GeneralG} Suppose that the Bayes estimator $\overline{\theta}$ satisfies Assumption \ref{assump:SecondMomentsG} and \ref{assump:PosteriorMean}. Then it can be easily seen that the above proof can be used to deduce that, for any $G \in \mathcal{P}(\Omega)$ with finite second moments (not necessarily equal to the prior $G^*$), the problem
	\[ \inf_{\pi \in \Gamma(\mu, G)} \int |\theta - \overline{\theta}(z)|^2 \,d\pi(z, \theta)  \]
	has a unique solution $\tilde \pi$. This unique solution takes the form
	\[ \tilde \pi = (\mathrm{Id}\times \tilde \delta)_{\sharp} \mu,  \]
	for $\tilde \delta $ the unique solution to the problem 
	\[ \inf_{\delta : \delta_{\sharp} \mu =  G } \E_{Z \sim \mu}[ | \overline{\theta}(Z) - \delta(Z)|^2  ].  \]
\end{remark}

\nc


\subsection{Proof of Theorem \ref{thm:Main2}}
\label{sec:Interpolation}

In order to prove Theorem \ref{thm:Main2} we begin by relaxing \eqref{eq:EquivProblemWithPnelizatin} as follows:
\begin{equation}
\label{eq:EquivProblemWithPnelizatinRelax}
\inf_{ \pi, \tilde \pi  } \int |\overline{\theta}(z) - \tilde \theta |^2 \,d\pi(z, \tilde \theta) + \frac{1}{2 \tau} \int | \tilde \theta -  \theta|^2 \,d\tilde \pi(\tilde \theta,  \theta)   ,
\end{equation}
where the inf is taken over pairs $(\pi, \tilde \pi)$ satisfying: $\pi \in \mathcal{P}({\R^d} \times \R^m)$, $\tilde \pi \in \mathcal{P}(\R^m \times {\R^m})$, $\pi_1=\mu$, $\tilde \pi_2= G^*$, and $\pi_2= \tilde \pi_1$. We will characterize solutions to \eqref{eq:EquivProblemWithPnelizatinRelax} following the proof of a theorem in \cite{AguehCarlier}. We will then relate these solutions with problem \eqref{eq:EquivProblemWithPnelizatin} and with the characterization given in the statement of Theorem \ref{thm:Main2}.

\begin{lemma}
	\label{lem:EuqivalenceSoftBarycenter}
	Let $\tau >0 $. Then problem \eqref{eq:EquivProblemWithPnelizatinRelax} is equivalent to problem
	\begin{equation}
	\min_{ \gamma \in \Gamma(\mu, G^*)} \int B(z, \theta) d\gamma(z,\theta),      
	\label{eqn:BarycenterProblem}
	\end{equation}
	where $B(\cdot,\cdot)$ is the \textit{barycenter cost}:
	\[ B(z,\theta): = \min_{\tilde \theta \in \R^m } \left\{| \overline{\theta}(z) -\tilde \theta|^2 +\frac{1}{2\tau } |\tilde \theta- \theta |^2 \right\}= \frac{1}{(1+ 2 \tau)} |\overline{\theta}(z)- \theta|^2.   \]
\end{lemma}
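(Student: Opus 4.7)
The plan has two parts: first, verify the closed-form expression for the barycenter cost $B(z,\theta)$; second, prove the equivalence of the two problems by a gluing-lemma argument that mirrors the relaxation technique of Agueh--Carlier for Wasserstein barycenters (as already noted in Remark on the proof of Theorem~\ref{thm:Main2}).

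For the formula for $B(z,\theta)$, the inner minimization is a strictly convex quadratic in $\tilde\theta$, so the first-order condition gives the unique minimizer
\[\tilde\theta^*(z,\theta) = \frac{2\tau\,\overline{\theta}(z) + \theta}{1+2\tau},\]
and direct substitution yields $B(z,\theta) = \frac{1}{1+2\tau}|\overline{\theta}(z)-\theta|^2$. This is routine calculus, so I would not belabor it.

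For the inequality $\eqref{eq:EquivProblemWithPnelizatinRelax} \geq \eqref{eqn:BarycenterProblem}$, I take an arbitrary admissible pair $(\pi,\tilde\pi)$. The compatibility condition $\pi_2=\tilde\pi_1$ lets me apply the gluing lemma to obtain a measure $\gamma\in\mathcal{P}(\R^d\times\R^m\times\R^m)$ whose $(z,\tilde\theta)$-marginal is $\pi$ and whose $(\tilde\theta,\theta)$-marginal is $\tilde\pi$. Projecting $\gamma$ to the $(z,\theta)$ coordinates gives a coupling $\hat\gamma\in\Gamma(\mu, G^*)$, and the objective of \eqref{eq:EquivProblemWithPnelizatinRelax} may be rewritten as
\[\int \Bigl(|\overline{\theta}(z)-\tilde\theta|^2 + \tfrac{1}{2\tau}|\tilde\theta - \theta|^2\Bigr)\,d\gamma(z,\tilde\theta,\theta) \;\geq\; \int B(z,\theta)\,d\hat\gamma(z,\theta) \;\geq\; \eqref{eqn:BarycenterProblem},\]
by minimizing pointwise over $\tilde\theta$.

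For the reverse inequality $\eqref{eqn:BarycenterProblem}\geq \eqref{eq:EquivProblemWithPnelizatinRelax}$, I take any $\hat\gamma\in\Gamma(\mu,G^*)$ and push it forward under the map $(z,\theta)\mapsto(z,\tilde\theta^*(z,\theta),\theta)$; this map is Borel measurable because $\overline{\theta}$ is measurable and $\tilde\theta^*$ is a continuous function of $(\overline{\theta}(z),\theta)$. Letting $\pi$ and $\tilde\pi$ denote the projections of this measure onto $(z,\tilde\theta)$ and $(\tilde\theta,\theta)$ respectively, the triple automatically satisfies $\pi_1=\mu$, $\tilde\pi_2=G^*$, $\pi_2=\tilde\pi_1$, and by the choice of $\tilde\theta^*$ the objective of \eqref{eq:EquivProblemWithPnelizatinRelax} evaluates to exactly $\int B(z,\theta)\,d\hat\gamma(z,\theta)$. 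Taking infimum over $\hat\gamma$ gives the claim.

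The main obstacle, if any, is purely bookkeeping: verifying that the gluing produces measures with the correct marginals, and that $\tilde\theta^*$ is measurable so that the pushforward construction is well-defined. Since $\tilde\theta^*$ has an explicit closed form in terms of $\overline{\theta}(z)$ and $\theta$ and both problems only require Borel structure, no regularity hypothesis beyond what is already in force (finite second moments of $G^*$ via Assumption~\ref{assump:SecondMomentsG}, which ensures finiteness of the barycenter objective) is needed. I do not anticipate any serious technical difficulty.
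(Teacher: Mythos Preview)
Your proposal is correct and follows essentially the same approach as the paper: the paper also proves one direction by pushing forward a coupling $\gamma^*\in\Gamma(\mu,G^*)$ under $(z,\theta)\mapsto(z,T(z,\theta),\theta)$ with $T$ the explicit argmin, and the other direction by gluing an admissible pair $(\pi,\tilde\pi)$ through their common marginal via disintegration and then bounding the integrand pointwise by $B(z,\theta)$. The only cosmetic difference is that the paper starts the first direction from an \emph{optimal} $\gamma^*$ (whose existence it asserts), whereas you work with an arbitrary $\hat\gamma$ and then infimize; your version is marginally cleaner but the substance is identical.
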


\begin{proof}
	Let $\gamma^* \in \Gamma(\mu, G^*)$ be a solution to \eqref{eqn:BarycenterProblem} (note that a solution to~\eqref{eqn:BarycenterProblem} indeed exists). For a given $(z,\theta)$ in the support of $\gamma^*$ we consider 
	\begin{equation}\label{eq:T-z-theta}
	T(z, \theta) :=  \argmin_{ \tilde \theta \in \R^m  } \left\{  | \overline{\theta}(z) - \tilde \theta|^2 +\frac{1}{2\tau } |\tilde \theta- \theta|^2 \right\} = \frac{2\tau }{1 + 2 \tau}  \overline{\theta}(z) + \frac{1}{2\tau + 1}\theta.
	\end{equation}
	Let $\nu \in \mathcal{P}({\R^d} \times \Omega \times \R^m)$ be given by
	\[ \nu:= ({\rm Id}, T )_{\sharp}\gamma^*,    \]
	where $({\rm Id}, T)$ is the map $({\rm Id}, T): (z, \theta) \in {\R^d} \times \Omega \mapsto (z, \theta, T(z, \theta))$,
	and let 
	\[  \pi^* := P_{13\sharp} \nu, \quad  \tilde \pi^* := P_{32\sharp} \nu,   \]
	where $P_{13}(z,\theta, \tilde \theta) = (z, \tilde \theta )$ and $P_{32}(z,\theta, \tilde \theta) = (\tilde \theta, \theta)$. Notice that $(\pi^*, \tilde \pi^*)$ is a feasible pair for \eqref{eq:EquivProblemWithPnelizatinRelax}. 
	For this pair we have
	\begin{align}
	\begin{split}
	\eqref{eqn:BarycenterProblem} \,= \int B(z, \theta) \, d \gamma^*(z , \theta) & =   \int \left(  | \overline{\theta}(z) - T(z, \theta) |^2 + \frac{1}{2 \tau} | T(z, \theta)- \theta|^2 \right) \,d \gamma^*(z , \theta) 
	\\& =  \int  | \overline{\theta}(z) - \tilde \theta |^2 d \pi^* (z, \tilde \theta) + \frac{1}{2 \tau} \int  | \tilde \theta - \theta|^2 d \tilde \pi^*(\tilde \theta , \theta)
	\\& \geq \eqref{eq:EquivProblemWithPnelizatinRelax}.
	\label{eq:AuxBarycenter1}
	\end{split}
	\end{align}

	Let us now consider an arbitrary feasible pair $(\pi, \tilde \pi)$ for \eqref{eq:EquivProblemWithPnelizatinRelax}. From $(\pi, \tilde \pi)$ we can construct $\gamma \in \Gamma(\mu, G^*)$ by glueing $\pi$ and $\tilde \pi$ together as we describe next. Let $\nu_0= \pi_2 = \tilde \pi_1 $. Then, by the disintegration theorem applied to $\pi$ and $\tilde \pi$, we can decompose $\pi$ and $\tilde \pi$ in terms of conditionals relative to one of their marginals (in this case $\nu_0$): 
	\[  d\pi(z, \tilde \theta) = d\pi_{z| \tilde \theta} (z) d\nu_0(\tilde \theta), \qquad   d\tilde \pi( \tilde \theta, \theta ) = d \pi_{ \theta|  \tilde \theta} (\theta ) d\nu_0(\tilde \theta). \] 
	Using these decompositions, we define $\gamma \in \mathcal{P}({\R^d} \times \Omega)$ as the probability measure acting on smooth test functions $\varphi$ according to
	\[  \int_{\R^d} \int_\Omega \varphi(z, \theta) d \gamma(z, \theta) =  \int_{\R^m} \left( \int_{{\R^d}} \int_{\Omega} \varphi(z, \theta) d \pi_{z|\tilde \theta}(z) d\pi_{\theta|\tilde \theta}(\theta) \right)d \nu_0(\tilde \theta).  \]
	To intuitively explain the joint distribution $(Z,\Theta) \sim \gamma$ above, we consider a joint distribution on three variables  $(Z,\Theta, \tilde \Theta)$ defined as follows: $\tilde \Theta \sim \nu_0$, $Z$ and $\Theta$ are independent given $\tilde \Theta$, with $Z \mid \tilde \Theta \sim \pi_{z|\tilde \theta}$, and $\Theta \mid \tilde \Theta \sim \pi_{\theta|\tilde \theta}$. Thus, $\gamma$ is the joint distribution of $(Z,\Theta)$ according to the above model. It is straightforward to check that $\gamma \in \Gamma(\mu, G^*)$. Moreover, we have the following:
	\begin{align}
	\begin{split}
	& \hspace{-0.5in}\int  | \overline{\theta}(z) - \tilde \theta |^2 \,d \pi (z, \tilde \theta) + \frac{1}{2 \tau} \int  | \tilde \theta - \theta|^2 \,d \tilde \pi(\tilde \theta , \theta) \\
	& = \int \int  | \overline{\theta}(z) - \tilde \theta |^2 \,d \pi_{z|\tilde \theta}(z) \,d\nu_0(\tilde \theta) + \frac{1}{2 \tau} \int  \int | \tilde \theta - \theta|^2 \,d \tilde \pi_{\theta | \tilde \theta}( \theta) \,d\nu_0(\tilde \theta)
	\\& = \int \left[\int \int  \left(| \overline{\theta}(z) - \tilde \theta |^2 + \frac{1}{2\tau} |\tilde \theta - \theta|^2 \right) \,d \pi_{z|\tilde \theta}(z) d{\tilde \pi}_{\theta|\tilde \theta}(\theta) \right] \,d \nu_0(\tilde \theta)    \\& \geq \int \left(\int \int  B(z, \theta) d \pi_{z|\tilde \theta}(z) \,d {\tilde \pi}_{\theta|\tilde \theta}(\theta) \right) \,d \nu_0(\tilde \theta)  
	\\& = \int \int B(z, \theta) \,d\gamma(z , \theta) \;\geq\; \eqref{eqn:BarycenterProblem}. 
	\end{split}
	\label{eq:AuxBarycenter2}
	\end{align}
	Since the above is true for any arbitrary feasible pair $(\pi, \tilde \pi)$, we deduce that $\eqref{eq:EquivProblemWithPnelizatinRelax} \geq \eqref{eqn:BarycenterProblem}  $. Combining with \eqref{eq:AuxBarycenter1} we obtain the equality. 
	
	From the equality $\eqref{eq:EquivProblemWithPnelizatinRelax} = \eqref{eqn:BarycenterProblem} $, we see from \eqref{eq:AuxBarycenter1} and \eqref{eq:AuxBarycenter2} that there is an explicit way to map solutions $\gamma^*$ of \eqref{eqn:BarycenterProblem} to solutions $(\pi^*, \tilde \pi^*)$ of \eqref{eq:EquivProblemWithPnelizatinRelax} and viceversa.
\end{proof}

\begin{lemma}
	\label{lem:UniquenessBarycenter}
	For any fixed $\tau>0$ problem \eqref{eqn:BarycenterProblem} is equivalent to \eqref{eqn:Kantorovich}. In particular, its unique solution $\gamma^*$ has the form
	\[ \gamma^* = ({\rm Id} \times \delta^*)_\sharp \mu \]
	for $\delta^*$ as defined in \eqref{eq:delta-OT}. 
\end{lemma}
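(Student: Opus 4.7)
The proof will be essentially immediate once we observe that the objective in \eqref{eqn:BarycenterProblem} is just a positive multiple of the objective in \eqref{eqn:Kantorovich}. More precisely, the plan is to note that by Lemma \ref{lem:EuqivalenceSoftBarycenter} the barycenter cost satisfies
\[
B(z,\theta) = \frac{1}{1+2\tau}\,|\overline{\theta}(z) - \theta|^2 = \frac{1}{1+2\tau}\, c_{G^*}(z,\theta),
\]
where $c_{G^*}$ is the cost appearing in \eqref{eqn:Kantorovich} (defined in \eqref{eq:c_z_y}). Since the two problems share the same feasible set $\Gamma(\mu, G^*)$, and their objectives differ by the strictly positive multiplicative constant $1/(1+2\tau)$, they have the same set of minimizers, and $\eqref{eqn:BarycenterProblem} = \tfrac{1}{1+2\tau}\,\eqref{eqn:Kantorovich}$.

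Given this observation, the result follows directly from Theorem \ref{thm:Main1}: under Assumptions \ref{assump:SecondMomentsG} and \ref{assump:PosteriorMean} (which are in force by the statement of Theorem \ref{thm:Main2}), problem \eqref{eqn:Kantorovich} admits a unique solution, namely $\pi^* = (\mathrm{Id} \times \delta^*)_\sharp \mu$, with $\delta^*$ given by \eqref{eq:delta-OT}. Hence the unique solution $\gamma^*$ of \eqref{eqn:BarycenterProblem} is the very same coupling, which establishes both the equivalence claim and the explicit form.

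I do not foresee any real obstacle; the only thing worth being careful about is citing Theorem \ref{thm:Main1} to obtain uniqueness (which requires Assumption \ref{assump:PosteriorMean}, not just the existence part coming from Brenier-type arguments). The brevity of the argument is the whole point: Lemma \ref{lem:EuqivalenceSoftBarycenter} has done the nontrivial work of reducing the two-variable relaxation \eqref{eq:EquivProblemWithPnelizatinRelax} to the single coupling problem \eqref{eqn:BarycenterProblem} with cost $B$, and once that is done the identification with \eqref{eqn:Kantorovich} is a matter of recognizing proportional costs.
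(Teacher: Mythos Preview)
Your proposal is correct and follows essentially the same approach as the paper's own proof: both observe that Lemma~\ref{lem:EuqivalenceSoftBarycenter} gives $B(z,\theta) = (1+2\tau)^{-1}|\overline{\theta}(z)-\theta|^2$, so \eqref{eqn:BarycenterProblem} and \eqref{eqn:Kantorovich} share the same feasible set and have proportional objectives, whence Theorem~\ref{thm:Main1} yields the stated unique solution.
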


\begin{proof}
	Notice that a direct computation reveals, from Lemma~\ref{lem:EuqivalenceSoftBarycenter},  that for all $z\in {\R^d}$ and $\theta \in \Omega$,
	$B(z,\theta) =  (1 + 2 \tau)^{-1}  |\overline{\theta}(z)- \theta|^2$. Therefore, problem \eqref{eqn:BarycenterProblem} is equivalent to problem \eqref{eqn:Kantorovich}.
\end{proof}

\begin{lemma}
	\label{lem:UniqueSolnRelaxed}
	Problem \eqref{eq:EquivProblemWithPnelizatinRelax} has a unique solution, which is given by 
	\begin{equation}
	\pi^*=F_{\sharp} \mu, \qquad \tilde \pi ^*=\tilde{F}_{\sharp} \mu  ,
	\label{aux:TheoremSoft}
	\end{equation}
	where  $F  : z \in {\R^d} \mapsto (z, \delta_\tau^*(z)) $ and $ \tilde F : z\in {\R^d} \mapsto (  \delta_\tau^* (z) , \delta^*(z)) $. Here $\delta^*_\tau$ is defined via \eqref{eq:delta-OT-tau}.
\end{lemma}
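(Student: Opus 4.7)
The plan is to exploit the correspondence between the relaxation \eqref{eq:EquivProblemWithPnelizatinRelax} and the barycenter problem \eqref{eqn:BarycenterProblem} established in Lemma \ref{lem:EuqivalenceSoftBarycenter}, combined with the uniqueness of solutions of \eqref{eqn:BarycenterProblem} provided by Lemma \ref{lem:UniquenessBarycenter} (which inherits uniqueness from Theorem \ref{thm:Main1}). The only new work is to identify the explicit form of the corresponding pair $(\pi^*, \tilde\pi^*)$ and to transport the uniqueness through the gluing construction.

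For existence, I would apply the recipe from the first half of the proof of Lemma \ref{lem:EuqivalenceSoftBarycenter} to the unique barycenter solution $\gamma^* = (\mathrm{Id} \times \delta^*)_\sharp \mu$. Concretely, form $\nu := (\mathrm{Id}, T)_\sharp \gamma^*$ and then set $\pi^* := P_{13\sharp}\nu$ and $\tilde\pi^* := P_{32\sharp}\nu$. A direct computation using \eqref{eq:T-z-theta} yields
\[ T(z, \delta^*(z)) \;=\; \frac{2\tau}{1+2\tau}\,\overline{\theta}(z) + \frac{1}{1+2\tau}\,\delta^*(z) \;=\; \delta_\tau^*(z), \]
so $\nu$ is the pushforward of $\mu$ under $z \mapsto (z, \delta^*(z), \delta_\tau^*(z))$, and projecting onto coordinates $(1,3)$ and $(3,2)$ respectively gives exactly $F_\sharp\mu$ and $\tilde F_\sharp\mu$ as claimed in \eqref{aux:TheoremSoft}. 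The feasibility of $(\pi^*, \tilde\pi^*)$ and its optimality are then immediate from Lemma \ref{lem:EuqivalenceSoftBarycenter}.

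For uniqueness, let $(\pi, \tilde\pi)$ be any solution of \eqref{eq:EquivProblemWithPnelizatinRelax} and glue them, as in the second half of the proof of Lemma \ref{lem:EuqivalenceSoftBarycenter}, into a joint law of a triple $(z, \tilde\theta, \theta)$ with $(z, \tilde\theta) \sim \pi$ and $(\tilde\theta, \theta) \sim \tilde\pi$. The chain of inequalities \eqref{eq:AuxBarycenter2} must be saturated, which forces two things: first, that the associated $\gamma \in \Gamma(\mu, G^*)$ is a minimizer of \eqref{eqn:BarycenterProblem}, so $\gamma = \gamma^*$ by Lemma \ref{lem:UniquenessBarycenter}; and second, that the pointwise inequality $|\overline{\theta}(z) - \tilde\theta|^2 + \frac{1}{2\tau}|\tilde\theta - \theta|^2 \geq B(z,\theta)$ holds with equality a.s.\ along the triple. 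Because the inner minimization defining $B(z,\theta)$ has the unique minimizer $\tilde\theta = T(z,\theta)$ (strict convexity in $\tilde\theta$), this forces $\tilde\theta = T(z,\theta)$ almost surely. Since $(z,\theta) \sim \gamma^*$ means $\theta = \delta^*(z)$ with $z \sim \mu$, this reduces to $\tilde\theta = \delta_\tau^*(z)$; reading off the marginals of $(z,\tilde\theta)$ and $(\tilde\theta, \theta)$ recovers $\pi = F_\sharp\mu$ and $\tilde\pi = \tilde F_\sharp\mu$. The main obstacle is this last step: although intuitively clear, one must chase the equality cases across both the integral inequality and the pointwise barycenter inequality, and invoke strict convexity of $\tilde\theta \mapsto |\overline{\theta}(z) - \tilde\theta|^2 + \frac{1}{2\tau}|\tilde\theta - \theta|^2$ to conclude that the auxiliary variable is a deterministic function of $(z,\theta)$.
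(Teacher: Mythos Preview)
Your proposal is correct and follows essentially the same approach as the paper's proof: both the existence part (pushing the unique barycenter solution $\gamma^* = (\mathrm{Id}\times\delta^*)_\sharp\mu$ through the map $(\mathrm{Id},T)$ and projecting) and the uniqueness part (gluing an arbitrary optimizer into a triple law, saturating the inequalities in \eqref{eq:AuxBarycenter2} to force $\tilde\theta = T(z,\theta)$ a.s.\ and $\gamma = \gamma^*$, then reading off the two marginal pairs) match the paper almost step for step. Your explicit invocation of strict convexity in $\tilde\theta$ to pin down the equality case is exactly the mechanism the paper uses implicitly when it asserts $\tilde\theta = T(z,\theta)$ $\Upsilon$-a.e.
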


\begin{proof}
	First, notice that from the proof of Lemma \ref{lem:EuqivalenceSoftBarycenter} we know that using $\gamma^* = ({\rm Id} \times \delta^* )_{\sharp}\mu$ we can construct a solution $(\pi, \tilde \pi)$ of \eqref{eq:EquivProblemWithPnelizatinRelax} according to 
	\[ \pi =  P_{13 \sharp} (({\rm Id}, T)_{\sharp} \gamma^*), \quad  \tilde \pi =  P_{32 \sharp} (({\rm Id}, T)_{\sharp} \gamma^*).\]
	Recall $T(\cdot,\cdot)$ from~\eqref{eq:T-z-theta} and note that $T(z, \delta(z))  = \delta^*_\tau(z)$. Using the form of $\gamma^*$, it is straightforward to verify that $\pi$ and $\tilde \pi$ defined above have the form in \eqref{aux:TheoremSoft}. It remains to show that this solution is unique.

	To see this, let $(\pi^*, \tilde{\pi}^*)$ be an arbitrary solution to \eqref{eq:EquivProblemWithPnelizatinRelax}. Let $\Upsilon$ be the probability measure over ${\R^d} \times \R^m \times \Omega$ defined by  
	\[ \int \psi(z, \tilde \theta, \theta) \, d\Upsilon(z, \tilde \theta, \theta) = \int \int \int \psi(z, \tilde \theta, \theta) \,d \pi^*_{z|\tilde \theta}(z) \,d \tilde \pi^*_{\theta| \tilde \theta}(\theta) \, d\nu_0(\tilde \theta),\]
	for all smooth test functions $\psi$; here recall the definitions of $\pi^*_{z|\tilde \theta}(\cdot)$, $ \tilde \pi^*_{\theta| \tilde \theta}(\cdot)$ and $\nu_0$ from the proof of Lemma~\ref{lem:EuqivalenceSoftBarycenter}.
	Using \eqref{eq:AuxBarycenter2} and the fact that the following inequality is actually an equality (and that both are integrals w.r.t.~the measure $\Upsilon$), we can deduce that for $\Upsilon$-a.e.~$(z,\tilde \theta, \theta)$ we have $\tilde{\theta}= T(z, \theta)$ (as the integrands must be a.e.~equal; cf.~\eqref{eq:T-z-theta}). From this it follows that the joint distribution $\Upsilon$ is determined by the joint distribution of $(z,\theta)$ and the other variable $\tilde \theta$ is a deterministic function of $(z,\theta)$, i.e.,~$\Upsilon = H_{\sharp} (P_{13 \sharp} \Upsilon ),$
	where $H: (z, \theta) \mapsto (z, T(z, \theta), \theta) $. From \eqref{eq:AuxBarycenter2} we can also deduce that $P_{13\sharp} \Upsilon$ is a solution of \eqref{eqn:BarycenterProblem}, which by Lemma \ref{lem:UniquenessBarycenter} must be equal to $({\rm Id}\times \delta^*)_{\sharp} \mu$. Therefore, 
	\[ \Upsilon = H_{\sharp}( ({\rm Id}\times \delta^*)_{\sharp} \mu  ).  \]
	From this and the fact that by construction we have $\pi^*= P_{12\sharp}  \Upsilon$ and $\tilde \pi^* = P_{23\sharp }\Upsilon $ it follows that $\pi^*$ and $\tilde \pi^*$ are as in \eqref{aux:TheoremSoft}.
\end{proof}

\begin{proof}[Proof of Theorem \ref{thm:Main2}]
	
	Let $\delta: {\R^d} \to \R^m$ be an arbitrary measurable map. Let $\tilde \pi$ be a $2$-OT transport plan between $\delta_{\sharp} \mu$ and $G^*$, and let $\pi=({\rm Id}, \delta)_{\sharp} \mu $. We see that $(\pi, \tilde \pi)$ is a feasible pair for \eqref{eq:EquivProblemWithPnelizatinRelax} and that 
	\[  \E_{Z \sim \mu}[ | \delta(Z) - \overline{\theta}(Z) | ] + \frac{1}{2\tau} W_2^2(\delta_{\sharp}\mu, G^*) = \int | \tilde \theta - \overline{\theta}(z)|^2 \,d\pi(z, \tilde \theta) + \frac{1}{2\tau } \int | \tilde \theta - \theta|^2 \,d\tilde \pi(\tilde \theta, \theta). \]
	
	From the above and the form of the unique solution to \eqref{eq:EquivProblemWithPnelizatinRelax} deduced in Lemma \ref{lem:UniqueSolnRelaxed} it follows that problem \eqref{eq:EquivProblemWithPnelizatin} admits a unique solution, which must have the form \eqref{eq:delta-OT-tau}.
\end{proof}
\nc

\nc

\section{Proofs of main results from Section \ref{sec:DenoisingObservablePenalty}}

\label{sec:DenoisingObservablePenaltyProofs}


\subsection{Proof of Theorem \ref{lem:StructureGammastar}}\label{pf:Theo-3.2}
\begin{proof}
	First we establish the existence of solutions to \eqref{eqn:Relaxation}. Let $\{ \gamma^n \}_{n \in \mathbb{N}} \subseteq \A$ be a minimizing sequence for the objective function in \eqref{eqn:Relaxation}; we recall that $\A$, defined in \eqref{def:FeasibleSet}, is the feasible set for problem \eqref{eqn:Relaxation}. In particular, we suppose that
	\[ \lim_{n \rightarrow \infty}  \int \cc_\tau d\gamma^n =  \inf_{\gamma \in \A} \int \cc_\tau d \gamma =: M_0 <+\infty; \]
	The fact that $M_0$ is finite follows from the fact that we can take the coupling $\gamma= F_{\sharp} P_{Z,\Theta}$ (recall $P_{Z , \Theta}$ is the joint distribution of $Z$ and $\Theta$) with $F(z,\theta):=(z, \theta, z, z)$, for which one can see (by Assumption~\ref{assump:SecondMomentsG}) that $\int \cc_\tau d \gamma < +\infty$ and $\gamma \in \A$. Without the loss of generality we assume that
	\[  \sup_{ n \in \N} \int \cc_\tau d\gamma^n \leq 2 M_0. \]
	First, we prove that the sequence $\{\gamma^n \}_{n \in \N}$ is precompact in the weak sense. By Prokhorov's theorem it suffices to prove that the sequence $\{\gamma^n \}_{n \in \N}$ is tight. To see this, notice that
	\[  \int  |\theta|^2 d\gamma_{2}^n(\theta) \leq 2  \int|\theta- \overline{\theta}(z_1)|^2 d\gamma^n(z_1,\theta, z_3, z_4 ) + 2\int |\overline{\theta}(z) |^2 d\mu(z) 
	\leq 4 M_0 + 2\int |\overline{\theta}(z) |^2 d\mu(z),      \]
	which follows from the elementary pointwise inequality $ |\theta|^2 \leq 2|\theta - \overline{\theta}(z)|^2  + 2|\overline{\theta}(z)|^2  $
	and a subsequent integration with respect to $\gamma^n$ on both sides. Likewise,
	\[  \int  |z_3|^2 d\gamma_{3}^n(z_3) \leq 2 \int|z_3 - z_4|^2 d\gamma^n(z_1,\theta, z_3, z_4 ) + 2\int |z_4|^2 d\mu(z_4) \leq  8 \tau M_0 + 2\int |z_4|^2 d\mu(z_4). \]
	From the above we can conclude that all second moments of the family of distributions $\{\gamma^n \}_{n \in \N}$ are uniformly bounded, and thus the family $\{ \gamma^n \}_{n \in \N}$ is indeed tight. It follows that, up to the extraction of a subsequence that is not relabeled, $\gamma^n$ converges weakly, as $n \rightarrow \infty$, toward a limit that we will denote by $\gamma^*$.
	
	Next we show that the limiting $\gamma^*$ must be feasible for \eqref{eqn:Relaxation}, i.e., it must belong to the feasible set $\A$. First, observe that $\gamma_1^* = \gamma_4^*=\mu$  follows from the weak convergence of $\gamma^n$ toward $\gamma^*$ and the fact that for all $n \in \N$ we have $\gamma^n_1= \gamma^n_4 = \mu$. To check that $\int p(\cdot \mid \theta )d\gamma^*_2(\theta) = \gamma_3^*(\cdot) $, and thus conclude that $\gamma^* \in \A$, it is sufficient to show that
	\begin{equation}\label{eqn:AuxExistenceRElaxa}
	\int \int \varphi(z) p(z \mid \theta) \, dz \,d\gamma^*_2(\theta) =  \int \varphi(z) \, d\gamma_3^*(z)
	\end{equation}
	for all $\varphi \in C_b(\R^d)$ (here $C_b(\R^d)$ is the set of all bounded continuous functions from $\R^d$ to $\R$). To see this, first notice that
	\[  \int \int \varphi(z) p(dz|\theta) d\gamma^*_2(\theta) = \int\left( \int \varphi(z) p(dz|\theta) \right)d\gamma^*_2(\theta) = \lim_{n \rightarrow \infty} \int\left( \int \varphi(z) p(dz|\theta) \right)d\gamma^n_2(\theta), \]
	which follows from the fact that the function $\theta \in \Omega \mapsto \int \varphi(z)p(dz|\theta) $ belongs to $C_b(\Omega)$ by Assumption \ref{assump:weak-cont} and the fact that $\varphi$ is bounded and the weak convergence of $\gamma_2^n $ toward $\gamma_2^*$ as $n \rightarrow \infty$. On the other hand, the fact that $\gamma^n \in \A$ for all $n$ and the weak convergence of $\gamma_3^n $ to $\gamma^*_3$ imply that
	\[ \lim_{n \rightarrow \infty} \int\left( \int \varphi(z) p(dz|\theta) \right)d\gamma^n_2(\theta) = \lim_{n \rightarrow \infty}    \int \varphi(z) d \gamma_3^n(z) = \int \varphi(z) d\gamma_3^*(z). \]
	Combining these identities we deduce \eqref{eqn:AuxExistenceRElaxa}.

	To show that $\gamma^*$ is a solution of \eqref{eqn:Relaxation}, we start by noticing that, thanks to Assumption \ref{assump:ContinuityThetaBar}, there is a set $B\subseteq \R^d$ with $\mu(B)=1$ in which the function $\overline{\theta}(\cdot)$ is continuous. Let $\mathcal{B}:= B \times \Omega \times \R^d \times \R^d$, and notice that 
	\[ \gamma_n(\mathcal{B}) = \gamma^*(\mathcal{B}) =1 , \quad \forall n \in \N, \]
	since the first marginals of the $\gamma_n$ and $\gamma^*$ are all equal to $\mu$. We deduce that the function $\cc_\tau$ is continuous in $\mathcal{B}$. In addition, $\cc_{\tau}$ is lower bounded by a constant (because it is nonnegative). We can thus invoke Proposition 5.1.10 in \cite{ambrosio2008gradient} and from the weak convergence of $\gamma_n$ toward $\gamma^*$ deduce that
	\[ M_0 \leq  \int \cc_\tau\,  d \gamma^* \leq \liminf_{n \rightarrow \infty} \int \cc_\tau\,  d \gamma^n  = M_0. \]
	In the first inequality above we just use the fact that $M_0$ is the infimum over all couplings.
	

	Next, we discuss the structure of solutions $\gamma^*$ of \eqref{eqn:Relaxation}. Consider an arbitrary $\gamma \in \A$ and let $\pi_{12}$ be optimal for the problem
	\[  \min_{\pi \in \Gamma(\gamma_1, \gamma_2)} \int |\theta - \overline{\theta}(z)|^2 \,d \pi(z, \theta)\]
	and let $\pi_{34}$ be optimal for the $2$-OT problem 
	\[  \min_{\pi \in \Gamma(\gamma_3, \gamma_4)} \int |z_3 - z_4|^2 \,d \pi(z_3, z_4).\] We define $\tilde{\gamma}:= \pi_{12} \otimes \pi_{34}$, i.e., $\tilde{\gamma}$ is the product measure between $\pi_{12}$ and $\pi_{34}$. Since $\tilde{\gamma}_l= \gamma_l$ for all $l=1,2,3,4$, it follows that $\tilde{\gamma} \in \A$ as well. Moreover, thanks to the fact that the cost $\cc_\tau$ is the sum of two terms without shared variables, we have
	\begin{align}
	\int \cc_\tau(z_1,\theta, z_3, z_4) \,d\tilde{\gamma}(z_1, \theta, z_3, z_4) & =  \int |\theta - \overline{\theta}(z_1)|^2 d \pi_{12}(z_1, \theta) + \frac{1}{2\tau} \int |z_3 - z_4|^2 d \pi_{34}(z_3, z_4) \notag   \\
	& \leq  \int |\theta - \overline{\theta}(z_1)|^2 d \gamma_{12}(z_1, \theta) + \frac{1}{2\tau} \int |z_3 - z_4|^2 d \gamma_{34}(z_3, z_4)    \notag\\
	& =  \int \cc_\tau(z_1,\theta, z_3, z_4) d {\gamma}(z_1, \theta, z_3, z_4). \notag
	\end{align}
	From the above display we conclude that if $\gamma=\gamma^*$ is a solution to \eqref{eqn:Relaxation}, then the inequality above must in fact be an equality, and thus we necessarily have
	\[ \int |\theta - \overline{\theta}(z_1)|^2 d \pi_{12}(z_1, \theta)   =\int |\theta - \overline{\theta}(z_1)|^2 d \gamma_{12}(z_1, \theta),\] 
	\[\int |z_3 - z_4|^2 d \pi_{34}(z_3, z_4)   =\int |z_3-z_4|^2 d \gamma_{34}(z_3, z_4).\]
	This in particular implies that $\gamma_{12}^*$ is a solution of \eqref{eq:TransportProblemCharacterizingRelaxation} and $\gamma_{34}^*$ is a $2$-OT plan between $\gamma_3^*$ and $\gamma_4^*$. The specific form $ (\mathrm{Id} \times \delta)_\sharp \mu$ for $\gamma_{12}^*$ under Assumption \ref{assump:PosteriorMean} follows from Remark \ref{rem:GeneralG}.
\end{proof}

Next we present the proof of Theorem \ref{cor:StructureSolutions}, which implies the existence of solutions to \eqref{eq:FModel} for arbitrary $\tau$.

\subsection{Proof of Theorem \ref{cor:StructureSolutions}}\label{pf:Theo-3.3}
\begin{proof}
	Let $\delta \in \mathbb{L}^2({\R^d} : \R^m; \mu)$, and consider the associated measure $\gamma_\delta$ to $\delta$ as defined in \eqref{eqn:Delta}. Then
	\begin{align}
	\begin{split}
	\eqref{eqn:Relaxation} \leq   \int \cc_\tau d \gamma_\delta &= \E_{Z \sim \mu} [  | \delta(Z) - \overline{\theta}(Z) |^2 ]  + \frac{1}{2\tau}\int |z_3 - z_4|^2 \, d\pi_{34}^\delta(z_3, z_4) 
	\\& = \E_{Z \sim \mu} [  | \delta(Z) - \overline{\theta}(Z) |^2 ]  + \frac{1}{2\tau} W_2^2(\mu_\delta, \mu). 
	\end{split}
	\label{eqn:AuxRelaxationOriginal0}
	\end{align}
	Since $\delta$ was arbitrary, the above implies that $\eqref{eqn:Relaxation} \leq \eqref{eq:FModel} - R_{\text{Bayes}} $, where we recall $R_{\text{Bayes}}$ is the Bayes risk (see the beginning of Section \ref{sec:SqError}). 
	
	Let $\gamma^*$ be a solution to problem \eqref{eqn:Relaxation}. By Theorem \ref{lem:StructureGammastar}, $\gamma_{12}^*$ can be written as $ \gamma_{12}^* = (\mathrm{Id} \times \delta_{\gamma^*})_\sharp \mu $ for some $\delta_{\gamma^*} \in \mathbb{L}^2({\R^d} : \R^m; \mu)$. In particular, $\gamma_2^* = \delta_{\gamma^* \sharp} \mu $ and thus also $\gamma_3 = \mu_{\delta_{\gamma^*}}$. From the proof of Theorem \ref{lem:StructureGammastar} we further deduce that
	\begin{align}
	\begin{split}
	\eqref{eqn:Relaxation}  & = \int \cc_\tau d \gamma^* = \int |\theta - \overline{\theta}(z_1)|^2 d \gamma_{12}^*(z_1, \theta) + \frac{1}{2\tau}  \int |z_3 - z_4|^2 d \gamma_{34}^*(z_3, z_4) 
	\\& =  \E_{Z \sim \mu} [ |\delta_{\gamma^*}(Z)  - \overline{\theta}(Z) |^2] + \frac{1}{2\tau} W_2^2(\mu_{\delta_{\gamma^*}}, \mu) 
	\\ & \geq \eqref{eq:FModel}-  R_{\text{Bayes}}.
	\end{split}
	\label{eqn:AuxRelaxationOriginal}
	\end{align}
	Combing the above two inequalities we deduce that $ \eqref{eqn:Relaxation} =  \eqref{eq:FModel}-  R_{\text{Bayes}}$ and that in \eqref{eqn:AuxRelaxationOriginal} the inequality is actually an equality. In particular, $\delta_{\gamma^*}$ is a solution to \eqref{eq:FModel}. 
	
	Conversely, now that we know that $\eqref{eqn:Relaxation} = \eqref{eq:FModel}-  R_{\text{Bayes}}$,~\eqref{eqn:AuxRelaxationOriginal0} implies that if $\delta$ is optimal for \eqref{eq:FModel}, then $\gamma_{\delta}$, as defined in \eqref{eqn:Delta}, is optimal for \eqref{eqn:Relaxation}.
\end{proof}

\subsection{Proof of Theorem~\ref{thm:Soft-F}}\label{pf:Theo-3.4}
\begin{proof}
	Let $\{ \tau_n \}_{n \in \mathbb{N}}$ be a sequence of positive numbers converging to $0$. Let $\delta_n^*$ be a solution of problem \eqref{eq:FModel} for $\tau=\tau_n$. Thanks to the second part of Theorem \ref{cor:StructureSolutions}, the measure $\gamma_{\delta_n^*}$ associated to $\delta_n^*$ that was defined in \eqref{eqn:Delta} is a solution for the problem \eqref{eqn:Relaxation} with $\tau=\tau_n$. In what follows, we use $\gamma^n$ to denote $\gamma_{\delta_n^*}$ in order to make the notation less cumbersome.
	
	Using similar arguments to those in the first part of the proof of Theorem \ref{lem:StructureGammastar}, we can show that $\{ \gamma^n\}_{n \in \N} $ is precompact in the weak topology of probability measures and that all its possible accumulation points are in $\A$. Let us then take a subsequence of $\{\gamma^n \}_{n \in \N}$ that converges weakly to some ${\gamma} \in \A$. For notational simplicity let us denote the subsequence also by $\{\gamma^n \}_{n \in \N}$. We will characterize $\gamma_{12}$, the projection of $\gamma$ onto the first two coordinates.
	
	First, observe that, since $\tau_n \rightarrow 0$, as $\int \mathbf{c}_{{\tau}_n } d \gamma^n $ is bounded from above (see the initials steps in the proof of Theorem~\ref{pf:Theo-3.2}),
	\[W_2^2(\gamma_3^n, \mu) \leq \int |z_3 - z_4|^2 \, d\gamma^n(z_3,z_4) \leq 2\tau_n \int \mathbf{c}_{{\tau}_n } d \gamma^n \rightarrow 0.\]
	In particular, $\gamma_3$, which is the limit of $\gamma_3^n$, must be equal to $\mu$. By Assumption \ref{assump:Identifiability}, we deduce from $\int p(\cdot| \theta) \, d \gamma_2(\theta) = \gamma_3(\cdot)  = \mu(\cdot) =  \int p(\cdot| \theta) \, d G^*(\theta)  $ that $\gamma_2 = G^*$. Therefore, $\gamma_{12} \in \Gamma(\mu, G^*)$. On the other hand, by weak convergence of $\gamma^n$ to $\gamma$ (and Assumption~\ref{assump:ContinuityThetaBar}) we get
	\begin{align}
	\begin{split}
	\int |\theta - \overline{\theta}(z)|^2 \, d \gamma_{12}(z, \theta) & \leq \liminf_{n \rightarrow \infty}  \int |\theta - \overline{\theta}(z)|^2 \, d \gamma_{12}^n(z, \theta)   
	\\& \leq \liminf_{n \rightarrow \infty}  \int \cc_{\tau_n} d \gamma^n.
	\end{split} 
	\label{aux:gamm12}
	\end{align}
	Now, an arbitrary $\pi_{12} \in \Gamma(\mu, G^*)$ induces a $\tilde{\gamma} \in \A$ as follows
	\[\tilde{\gamma} := \pi_{12} \otimes ( \mathrm{Id} \times \mathrm{Id}  )_{\sharp }\mu, \]
	and as can be easily verified we have
	\[ \int \cc_{\tau_n} d\tilde \gamma = \int |\theta - \overline{\theta}(z)|^2 \, d\pi_{12}(z,\theta).\]
	Since $\gamma^n$ is optimal for \eqref{eqn:Relaxation} with $\tau = \tau_n$, it follows that  
	\[ \int \cc_{\tau_n} \, d\gamma^n \leq  \int \cc_{\tau_n} d\tilde \gamma = \int |\theta - \overline{\theta}(z)|^2 d\pi_{12}(z,\theta).   \]
	Taking $\liminf$ on both sides of the above inequality, combining with \eqref{aux:gamm12}, and using the fact that $\pi_{12}$ was arbitrary, we deduce that $\gamma_{12}$ is a solution to \eqref{eqn:Kantorovich}. Theorem \ref{thm:Main1} thus implies that $\gamma_{12} = \pi^*= (\mathrm{Id} \times \delta^*)_{\sharp} \mu$, where $\delta^*$ is our OT-based denoising estimand. We have thus shown that any convergent subsequence of the original $\{\gamma^n \}_{n \in \N}$ converges to the same limit point $\pi^*:=(\mathrm{Id} \times \delta^*)_{\sharp} \mu$, and as a consequence the original sequence also converges to this same limit point.
	


	At this stage we may use a series of results from functional analysis and measure theory that we present in Appendix \ref{app:MeasureFunctional} to deduce that $\delta_n^* \rightarrow_{\mathbb{L}^2({\R^d} : \R^m; \mu)} \delta^*$. Indeed, first notice that Lemma \ref{lem:CharacterizationTL^0} implies that $\delta_n^* $ converges to $\delta^*$ in $\mu$-measure (see definition in the statement of Lemma \ref{lem:CharacterizationTL^0}). In addition, since we also have 
	\[ \sup_{n \in \N} \int |\delta_n^*(z)|^2 d\mu(z) < \infty,  \]
	as can be easily verified, we can invoke Lemma \ref{lem:MeasureWeakL2} to conclude that $\delta_n^*$ converges weakly in $\mathbb{L}^2({\R^d} : \R^m; \mu)$ to $\delta^*$ (see Definition \ref{def:WeakL2}). In particular, we have
	\[  \lim_{n \rightarrow \infty} \int \delta_n^*(z)\cdot \overline{\theta}(z) \,d\mu(z) =   \int \delta^*(z)\cdot \overline{\theta}(z) \,d\mu(z).    \]
	Since in addition we have
	\[ \lim_{n \rightarrow \infty}   \int |\delta_n^*(z)- \overline{\theta}(z)|^2 \,d\mu(z)  = \int |\delta^*(z)- \overline{\theta}(z)|^2 \,d\mu(z),  \]
	after expanding the square we conclude that
	\[ \lim_{n \rightarrow \infty}   \int |\delta_n^*(z)|^2 d\mu(z)  = \int |\delta^*(z)|^2 d\mu(z).  \]
	Lemma~\ref{lem:weak+norm=strong} now implies that $\delta_n^*$ converges in $\mathbb{L}^2({\R^d} : \R^m; \mu)$ to $\delta^*$, as we wanted to prove.
\end{proof}

\section{Discussion and future work}
\label{sec:Discussion}

In this paper we have presented a new perspective on the denoising problem --- where one observes $Z$ (from model~\eqref{eq:Mix-Mdl}) and the goal is to predict the underlying latent variable $\Theta \sim G^*$ --- based on OT theory. We define the OT-based denoiser $\delta^*(Z)$ as the function which minimizers the Bayes risk in this problem subject to the distributional stability constraint $\delta^*(Z) \sim G^*$. Moreover, we have developed two approaches to  characterize this OT-based denoiser $\delta^*(Z)$, one where we explicitly use $G^*$ (Section~\ref{sec:DenoisingLatentPenalty}) and one where we directly involve $\mu$ (the marginal distribution of $Z$) and the likelihood model $\{p(\cdot \mid \theta)\}_{\theta \in \Omega}$ without an explicit use of the prior $G^*$ (Section~\ref{sec:DenoisingObservablePenalty}). 

One important direction that we believe is worth investigating in future work is the numerical implementation of our proposals in the finite data setting. In Appendix~\ref{sec:EmpiricalBayes} we outline an approach to implementing the sample version of the Kantorovich relaxation problem~\eqref{eqn:Kantorovich} (by directly plugging in an estimator of $G^*$) which would lead  to an estimator of $\delta^*$ (cf.~\eqref{eq:delta-OT}). We conjecture that this approach would yield a consistent estimator of $\delta^*$ and it would be interesting to study its rate of convergence. 

The adaptation of our approach in Section~\ref{sec:DenoisingObservablePenalty} to the finite data setting to find a solution to~\eqref{eq:FModel} can, in principle, avoid direct estimation of $G^*$. Here the key challenge is to find a suitable sample version of the Kantorovich relaxation problem~\eqref{eqn:Relaxation} (which under appropriate conditions yields a solution to~\eqref{eq:FModel}; see Theorem~\ref{cor:StructureSolutions}). Indeed, in contrast to the gradient descent approach outlined in \eqref{rem:GradDescent} for solving \eqref{eq:FModel} in the finite data setting, the Kantorovich relaxation \eqref{eqn:Relaxation} is a linear program whose optimizers are guaranteed to induce global solutions to \eqref{eq:FModel}. However, the first hurdle in developing an empirical version of~\eqref{eqn:Relaxation} is to  estimate the cost function $\cc_\tau$ in~\eqref{eqn:CostRelaxation} which involves the Bayes estimator $\overline{\theta} (\cdot)$. This is where Tweedie's formula (see~\eqref{eq:Tweedie} in Appendix~\ref{sec:Tweedie}) can be very useful. It expresses the posterior mean $\overline{\theta}(\cdot)$ in an exponential family model (see Appendix~\ref{sec:Exp-Family}) in terms of the marginal density $f_{G^*}$ of the observations (and its gradient) that can be estimated (nonparametrically) directly from the sample $Z_1,\ldots, Z_n$, say via kernel density estimation. Thus, Tweedie's formula can yield an estimated cost function without directly estimating the unknown prior $G^*$. The next step would be to solve problem~\eqref{eqn:Relaxation} with this estimated cost. As problem~\eqref{eqn:Relaxation} is closely reminiscent of a multimarginal OT problem we expect that some adaptations of existing computational OT tools can be useful in solving it. We leave a thorough study of this approach as future work.

\section*{Acknowledgments}
The authors are thankful to Young-Heon Kim and Brendan Pass for enlightening discussions on topics related to the content of this paper. NGT was partially supported by the NSF-DMS grant 2236447 and would also like to thank the IFDS at UW-Madison and NSF through TRIPODS grant 2023239 for their support. BS was supported by the NSF-DMS grant 2311062.

\bibliographystyle{abbrv}
\bibliography{References}

\begin{thebibliography}{10}

\bibitem{AguehCarlier}
M.~Agueh and G.~Carlier.
\newblock Barycenters in the {W}asserstein space.
\newblock {\em SIAM Journal on Mathematical Analysis}, 43(2):904--924, 2011.

\bibitem{ambrosio2008gradient}
L.~Ambrosio, N.~Gigli, and G.~Savar{\'e}.
\newblock {\em Gradient flows: in metric spaces and in the space of probability
  measures}.
\newblock Springer Science \& Business Media, 2008.

\bibitem{Andrews-Mallows-74}
D.~F. Andrews and C.~L. Mallows.
\newblock Scale mixtures of normal distributions.
\newblock {\em J. Roy. Statist. Soc. Ser. B}, 36:99--102, 1974.

\bibitem{Bartholomew-Et-Al-2011}
D.~Bartholomew, M.~Knott, and I.~Moustaki.
\newblock {\em Latent variable models and factor analysis}.
\newblock Wiley Series in Probability and Statistics. John Wiley \& Sons, Ltd.,
  Chichester, third edition, 2011.
\newblock A unified approach.

\bibitem{BlauMichaeli}
Y.~Blau and T.~Michaeli.
\newblock The perception-distortion tradeoff.
\newblock In {\em 2018 IEEE/CVF Conference on Computer Vision and Pattern
  Recognition}, pages 6228--6237, 2018.

\bibitem{Bohning-1999}
D.~B\"{o}hning.
\newblock {\em Computer-assisted analysis of mixtures and applications},
  volume~81 of {\em Monographs on Statistics and Applied Probability}.
\newblock Chapman \& Hall/CRC, Boca Raton, FL, 1999.
\newblock Meta-analysis, disease mapping and others.

\bibitem{Buttazzo2012}
G.~Buttazzo, L.~De~Pascale, and P.~Gori-Giorgi.
\newblock Optimal-transport formulation of electronic density-functional
  theory.
\newblock {\em Physical Review A}, 85(6), June 2012.

\bibitem{Carlier2008}
G.~Carlier and I.~Ekeland.
\newblock Matching for teams.
\newblock {\em Economic Theory}, 42(2):397–418, Oct. 2008.

\bibitem{carroll1988optimal}
R.~J. Carroll and P.~Hall.
\newblock Optimal rates of convergence for deconvolving a density.
\newblock {\em J. Amer. Statist. Assoc.}, 83(404):1184--1186, 1988.

\bibitem{Hallin-2017}
V.~Chernozhukov, A.~Galichon, M.~Hallin, and M.~Henry.
\newblock Monge-{K}antorovich depth, quantiles, ranks and signs.
\newblock {\em Ann. Statist.}, 45(1):223--256, 2017.

\bibitem{Cotar2012}
C.~Cotar, G.~Friesecke, and C.~Kl\"{u}ppelberg.
\newblock Density functional theory and optimal transportation with coulomb
  cost.
\newblock {\em Communications on Pure and Applied Mathematics},
  66(4):548–599, Dec. 2012.

\bibitem{Deb-Et-Al-2021}
N.~Deb, P.~Ghosal, and B.~Sen.
\newblock Rates of estimation of optimal transport maps using plug-in
  estimators via barycentric projections.
\newblock {\em Advances in Neural Information Processing Systems},
  34:29736--29753, 2021.

\bibitem{Deb-Sen-2023}
N.~Deb and B.~Sen.
\newblock Multivariate rank-based distribution-free nonparametric testing using
  measure transportation.
\newblock {\em J. Amer. Statist. Assoc.}, 118(541):192--207, 2023.

\bibitem{delBarrio-Loubes-2019}
E.~del Barrio and J.-M. Loubes.
\newblock Central limit theorems for empirical transportation cost in general
  dimension.
\newblock {\em Ann. Probab.}, 47(2):926--951, 2019.

\bibitem{delbracio2023inversion}
M.~Delbracio and P.~Milanfar.
\newblock Inversion by direct iteration: An alternative to denoising diffusion
  for image restoration.
\newblock {\em Transactions on Machine Learning Research}, 2023.
\newblock Featured Certification.

\bibitem{EM-Algo-1977}
A.~P. Dempster, N.~M. Laird, and D.~B. Rubin.
\newblock Maximum likelihood from incomplete data via the {EM} algorithm.
\newblock {\em J. Roy. Statist. Soc. Ser. B}, 39(1):1--38, 1977.
\newblock With discussion.

\bibitem{DittmerEtAl}
S.~Dittmer, C.-B. Schönlieb, and P.~Maass.
\newblock Ground truth free denoising by optimal transport.
\newblock {\em Numerical Algebra, Control and Optimization}, 14(1):34--58,
  2024.

\bibitem{Divol-2022}
V.~Divol, J.~Niles-Weed, and A.-A. Pooladian.
\newblock Optimal transport map estimation in general function spaces.
\newblock {\em arXiv preprint arXiv:2212.03722}, 2022.

\bibitem{Dyson-1926}
F.~Dyson.
\newblock A method for correcting series of parallax observations.
\newblock {\em Monthly Notices of the Royal Astronomical Society}, 86:686,
  1926.

\bibitem{Efron-2003}
B.~Efron.
\newblock Robbins, empirical {B}ayes and microarrays.
\newblock volume~31, pages 366--378. 2003.
\newblock Dedicated to the memory of Herbert E. Robbins.

\bibitem{Efron-2009}
B.~Efron.
\newblock Empirical {B}ayes estimates for large-scale prediction problems.
\newblock {\em J. Amer. Statist. Assoc.}, 104(487):1015--1028, 2009.

\bibitem{Efron-2010}
B.~Efron.
\newblock {\em Large-scale inference}, volume~1 of {\em Institute of
  Mathematical Statistics (IMS) Monographs}.
\newblock Cambridge University Press, Cambridge, 2010.
\newblock Empirical Bayes methods for estimation, testing, and prediction.

\bibitem{Efron-2011}
B.~Efron.
\newblock Tweedie's formula and selection bias.
\newblock {\em J. Amer. Statist. Assoc.}, 106(496):1602--1614, 2011.

\bibitem{Efron-2014}
B.~Efron.
\newblock Two modeling strategies for empirical {B}ayes estimation.
\newblock {\em Statist. Sci.}, 29(2):285--301, 2014.

\bibitem{Efron-2016}
B.~Efron.
\newblock Empirical {B}ayes deconvolution estimates.
\newblock {\em Biometrika}, 103(1):1--20, 2016.

\bibitem{Efron-19}
B.~Efron.
\newblock Bayes, oracle {B}ayes and empirical {B}ayes.
\newblock {\em Statist. Sci.}, 34(2):177--201, 2019.

\bibitem{Efron2022}
B.~Efron.
\newblock {\em Exponential families in theory and practice}.
\newblock Cambridge University Press, 2022.

\bibitem{Ekeland2004}
I.~Ekeland.
\newblock An optimal matching problem.
\newblock {\em ESAIM: Control, Optimisation and Calculus of Variations},
  11(1):57–71, Dec. 2004.

\bibitem{fan1991optimal}
J.~Fan.
\newblock On the optimal rates of convergence for nonparametric deconvolution
  problems.
\newblock {\em Ann. Statist.}, 19(3):1257--1272, 1991.

\bibitem{Fan-2023}
Z.~Fan, L.~Guan, Y.~Shen, and Y.~Wu.
\newblock Gradient flows for empirical bayes in high-dimensional linear models.
\newblock {\em arXiv preprint arXiv:2312.12708}, 2023.

\bibitem{Federer}
H.~Federer.
\newblock Curvature measures.
\newblock {\em Transactions of the American Mathematical Society},
  93(3):418--491, 1959.

\bibitem{Feller-71}
W.~Feller.
\newblock {\em An introduction to probability theory and its applications.
  {V}ol. {II}}.
\newblock John Wiley \& Sons, Inc., New York-London-Sydney, second edition,
  1971.

\bibitem{Fournier-Guillin-2015}
N.~Fournier and A.~Guillin.
\newblock On the rate of convergence in {W}asserstein distance of the empirical
  measure.
\newblock {\em Probab. Theory Related Fields}, 162(3-4):707--738, 2015.

\bibitem{NEURIPS2021_d77e6859}
D.~Freirich, T.~Michaeli, and R.~Meir.
\newblock A theory of the distortion-perception tradeoff in wasserstein space.
\newblock In M.~Ranzato, A.~Beygelzimer, Y.~Dauphin, P.~Liang, and J.~W.
  Vaughan, editors, {\em Advances in Neural Information Processing Systems},
  volume~34, pages 25661--25672. Curran Associates, Inc., 2021.

\bibitem{NGTSlepcev}
N.~{Garc{\'i}a Trillos} and D.~Slep{\v{c}}ev.
\newblock Continuum limit of total variation on point clouds.
\newblock {\em Archive for Rational Mechanics and Analysis}, 220(1):193--241, 4
  2016.

\bibitem{Ghodrati2022}
L.~Ghodrati and V.~M. Panaretos.
\newblock Distribution-on-distribution regression via optimal transport maps.
\newblock {\em Biometrika}, 109(4):957–974, Jan. 2022.

\bibitem{PanaretosEtAlMulti}
L.~Ghodrati and V.~M. Panaretos.
\newblock Transportation of measure regression in higher dimensions.
\newblock {\em arXiv:2305.17503}, 2023.

\bibitem{Ghosal-Sen-2022}
P.~Ghosal and B.~Sen.
\newblock Multivariate ranks and quantiles using optimal transport:
  consistency, rates and nonparametric testing.
\newblock {\em Ann. Statist.}, 50(2):1012--1037, 2022.

\bibitem{Gu-Koenker-2017}
J.~Gu and R.~Koenker.
\newblock Empirical {B}ayesball remixed: empirical {B}ayes methods for
  longitudinal data.
\newblock {\em J. Appl. Econometrics}, 32(3):575--599, 2017.

\bibitem{Gu-Koenker-2023}
J.~Gu and R.~Koenker.
\newblock Invidious comparisons: ranking and selection as compound decisions.
\newblock {\em Econometrica}, 91(1):1--41, 2023.

\bibitem{Hallin-2021}
M.~Hallin, E.~del Barrio, J.~Cuesta-Albertos, and C.~Matr\'{a}n.
\newblock Distribution and quantile functions, ranks and signs in dimension
  {$d$}: a measure transportation approach.
\newblock {\em Ann. Statist.}, 49(2):1139--1165, 2021.

\bibitem{Hallin-2023}
M.~Hallin, D.~Hlubinka, and v.~Hudecov\'{a}.
\newblock Efficient fully distribution-free center-outward rank tests for
  multiple-output regression and {MANOVA}.
\newblock {\em J. Amer. Statist. Assoc.}, 118(543):1923--1939, 2023.

\bibitem{Hutter-Rigollet-2021}
J.-C. H\"{u}tter and P.~Rigollet.
\newblock Minimax estimation of smooth optimal transport maps.
\newblock {\em Ann. Statist.}, 49(2):1166--1194, 2021.

\bibitem{Ignatiadis-Huber-21}
N.~Ignatiadis and W.~Huber.
\newblock Covariate powered cross-weighted multiple testing.
\newblock {\em J. R. Stat. Soc. Ser. B. Stat. Methodol.}, 83(4):720--751, 2021.

\bibitem{Jiang-Zhang-2009}
W.~Jiang and C.-H. Zhang.
\newblock General maximum likelihood empirical {B}ayes estimation of normal
  means.
\newblock {\em Ann. Statist.}, 37(4):1647--1684, 2009.

\bibitem{Keener2010}
R.~W. Keener.
\newblock {\em Theoretical statistics}.
\newblock Springer Texts in Statistics. Springer, New York, 2010.
\newblock Topics for a core course.

\bibitem{kiefer1956consistency}
J.~Kiefer and J.~Wolfowitz.
\newblock Consistency of the maximum likelihood estimator in the presence of
  infinitely many incidental parameters.
\newblock {\em Ann. Math. Statist.}, 27:887--906, 1956.

\bibitem{Koenker-Gu-2019}
R.~Koenker and J.~Gu.
\newblock Comment: {M}inimalist {$g$}-modeling [{MR}3983318].
\newblock {\em Statist. Sci.}, 34(2):209--213, 2019.

\bibitem{KM-14}
R.~Koenker and I.~Mizera.
\newblock Convex optimization, shape constraints, compound decisions, and
  empirical {B}ayes rules.
\newblock {\em J. Amer. Statist. Assoc.}, 109(506):674--685, 2014.

\bibitem{Laird-1978}
N.~Laird.
\newblock Nonparametric maximum likelihood estimation of a mixed distribution.
\newblock {\em J. Amer. Statist. Assoc.}, 73(364):805--811, 1978.

\bibitem{Langaas-Et-Al-2005}
M.~Langaas, B.~H. Lindqvist, and E.~Ferkingstad.
\newblock Estimating the proportion of true null hypotheses, with application
  to {DNA} microarray data.
\newblock {\em J. R. Stat. Soc. Ser. B Stat. Methodol.}, 67(4):555--572, 2005.

\bibitem{Lashkari08}
D.~Lashkari and P.~Golland.
\newblock Convex clustering with exemplar-based models.
\newblock In {\em Advances in neural information processing systems}, pages
  825--832, 2008.

\bibitem{Lindsay-1983}
B.~G. Lindsay.
\newblock The geometry of mixture likelihoods: a general theory.
\newblock {\em Ann. Statist.}, 11(1):86--94, 1983.

\bibitem{Lindsay-1995}
B.~G. Lindsay.
\newblock Mixture models: theory, geometry and applications.
\newblock In {\em NSF-CBMS regional conference series in probability and
  statistics}. JSTOR, 1995.

\bibitem{Jasa1984}
T.~A. Louis.
\newblock Estimating a population of parameter values using bayes and empirical
  bayes methods.
\newblock {\em Journal of the American Statistical Association},
  79(386):393--398, 1984.

\bibitem{Manole-Weed-2024}
T.~Manole and J.~Niles-Weed.
\newblock Sharp convergence rates for empirical optimal transport with smooth
  costs.
\newblock {\em Ann. Appl. Probab.}, 34(1B):1108--1135, 2024.

\bibitem{McLachlan-Peel-2000}
G.~McLachlan and D.~Peel.
\newblock {\em Finite mixture models}.
\newblock Wiley Series in Probability and Statistics: Applied Probability and
  Statistics. Wiley-Interscience, New York, 2000.

\bibitem{meister2009deconvolution}
A.~Meister.
\newblock {\em Deconvolution problems in nonparametric statistics}, volume 193.
\newblock Springer Science \& Business Media, 2009.

\bibitem{Park-Casella-2008}
T.~Park and G.~Casella.
\newblock The {B}ayesian lasso.
\newblock {\em J. Amer. Statist. Assoc.}, 103(482):681--686, 2008.

\bibitem{Pass2015}
B.~Pass.
\newblock Multi-marginal optimal transport: Theory and applications.
\newblock {\em ESAIM: Mathematical Modelling and Numerical Analysis},
  49(6):1771–1790, Nov. 2015.

\bibitem{Peyre-2019}
G.~Peyr{\'e}, M.~Cuturi, et~al.
\newblock Computational optimal transport: With applications to data science.
\newblock {\em Foundations and Trends{\textregistered} in Machine Learning},
  11(5-6):355--607, 2019.

\bibitem{MOTImaging}
J.~Rabin, G.~Peyr{\'e}, J.~Delon, and M.~Bernot.
\newblock Wasserstein barycenter and its application to texture mixing.
\newblock In A.~M. Bruckstein, B.~M. ter Haar~Romeny, A.~M. Bronstein, and
  M.~M. Bronstein, editors, {\em Scale Space and Variational Methods in
  Computer Vision}, pages 435--446, Berlin, Heidelberg, 2012. Springer Berlin
  Heidelberg.

\bibitem{Robbins-1956}
H.~Robbins.
\newblock An empirical {B}ayes approach to statistics.
\newblock In {\em Proceedings of the {T}hird {B}erkeley {S}ymposium on
  {M}athematical {S}tatistics and {P}robability, 1954--1955, vol. {I}}, pages
  157--163. University of California Press, Berkeley-Los Angeles, Calif., 1956.

\bibitem{Santambrogio}
F.~Santambrogio.
\newblock {\em Optimal transport for applied mathematicians}, volume~87 of {\em
  Progress in Nonlinear Differential Equations and their Applications}.
\newblock Birkh\"{a}user/Springer, Cham, 2015.
\newblock Calculus of variations, PDEs, and modeling.

\bibitem{Hallin-2022}
H.~Shi, M.~Hallin, M.~Drton, and F.~Han.
\newblock On universally consistent and fully distribution-free rank tests of
  vector independence.
\newblock {\em Ann. Statist.}, 50(4):1933--1959, 2022.

\bibitem{Slawski-Sen-2022}
M.~Slawski and B.~Sen.
\newblock Permuted and unlinked monotone regression in $\mathbb{R}^d$: an
  approach based on mixture modeling and optimal transport.
\newblock {\em arXiv preprint arXiv:2201.03528}, 2022.

\bibitem{Soloff-Et-Al-2021}
J.~Soloff, A.~Guntuboyina, and B.~Sen.
\newblock Multivariate, heteroscedastic empirical bayes via nonparametric
  maximum likelihood.
\newblock {\em arXiv preprint arXiv:2109.03466}, 2021.

\bibitem{Stephens-2017}
M.~Stephens.
\newblock False discovery rates: a new deal.
\newblock {\em Biostatistics}, 18(2):275--294, 2017.

\bibitem{Villani2003}
C.~Villani.
\newblock {\em Topics in optimal transportation}, volume~58 of {\em Graduate
  Studies in Mathematics}.
\newblock American Mathematical Society, Providence, RI, 2003.

\bibitem{Villani}
C.~Villani.
\newblock {\em Optimal transport: old and new}, volume 338 of {\em Grundlehren
  der mathematischen Wissenschaften [Fundamental Principles of Mathematical
  Sciences]}.
\newblock Springer-Verlag, Berlin, 2009.

\bibitem{WJ-2008}
M.~J. Wainwright and M.~I. Jordan.
\newblock Graphical models, exponential families, and variational inference.
\newblock {\em Foundations and Trends{\textregistered} in Machine Learning},
  1(1--2):1--305, 2008.

\bibitem{WangWen}
W.~Wang, F.~Wen, Z.~Yan, and P.~Liu.
\newblock Optimal transport for unsupervised denoising learning.
\newblock {\em IEEE Transactions on Pattern Analysis and Machine Intelligence},
  45(2):2104--2118, 2023.

\bibitem{Weed-Bach-2019}
J.~Weed and F.~Bach.
\newblock Sharp asymptotic and finite-sample rates of convergence of empirical
  measures in {W}asserstein distance.
\newblock {\em Bernoulli}, 25(4A):2620--2648, 2019.

\bibitem{West-84}
M.~West.
\newblock Outlier models and prior distributions in {B}ayesian linear
  regression.
\newblock {\em J. Roy. Statist. Soc. Ser. B}, 46(3):431--439, 1984.

\bibitem{Woodroofe-Sun-1993}
M.~Woodroofe and J.~Sun.
\newblock A penalized maximum likelihood estimate of {$f(0+)$} when {$f$} is
  nonincreasing.
\newblock {\em Statist. Sinica}, 3(2):501--515, 1993.

\bibitem{WassRegression}
Z.~L. Yaqing~Chen and H.-G. Müller.
\newblock Wasserstein regression.
\newblock {\em Journal of the American Statistical Association},
  118(542):869--882, 2023.

\bibitem{zhang1990fourier}
C.-H. Zhang.
\newblock Fourier methods for estimating mixing densities and distributions.
\newblock {\em Ann. Statist.}, 18(2):806--831, 1990.

\bibitem{Zhang-et-Al-2022}
Y.~Zhang, Y.~Cui, B.~Sen, and K.-C. Toh.
\newblock On efficient and scalable computation of the nonparametric maximum
  likelihood estimator in mixture models.
\newblock {\em Journal of Machine Learning Research}, 25(8):1--46, 2024.

\bibitem{Zhong-et-al-2022}
X.~Zhong, C.~Su, and Z.~Fan.
\newblock Empirical {B}ayes {PCA} in high dimensions.
\newblock {\em J. R. Stat. Soc. Ser. B. Stat. Methodol.}, 84(3):853--878, 2022.

\end{thebibliography}

\appendix




\section{Exponential families}\label{sec:Exp-Family}
Consider a random vector $Z \in \R^d$ having a density w.r.t.~a dominating measure $\lambda$, parametrized by $\theta := (\theta_1,\ldots, \theta_m) \in \R^m$ and expressible as:
\begin{equation}\label{eq:ExpFamily}
p(z\mid \theta) := \exp \Big[ \sum_{j=1}^m \theta_j T_j(z) - A(\theta) \Big] h(z), \qquad \mbox{for } \; z \in \R^p.
\end{equation}
Here $h:\R^d \to \R$ is a nonnegative function, $T = (T_1,\ldots,T_m)$ is a measurable function from $\R^d$ to $\R^m$, and the parameter space is the set 
\begin{equation}\label{eq:ParaSpace}
\Omega := \{\theta \in \R^m: A(\theta) < \infty\},
\end{equation}
where the function $A: \Omega \to \R$ (sometimes referred to as the {\it cumulant function} or the {\it log-partition function}) is defined as
\begin{equation}\label{eq:A-CGF}
A(\theta) := \log \int \exp \left[ \sum_{i=1}^m \theta_i T_i(z)\right] h(z) \, d \lambda(z).
\end{equation}
Through the discussion in this Appendix we will assume that $\Omega$ is a nonempty open subset of $\R^m$ for simplicity. 

In this case, $Z$ is said to belong to a {\it regular $m$-parameter exponential family}, and $\theta$ is the natural or canonical parametrization.  
There are many examples of parametric families belonging to an exponential family, e.g., Gaussian, binomial, multinomial, Poisson, gamma, and beta distributions, as well as many others. Here are some examples.

\begin{example}[Exponential distribution] Consider the exponential distribution parametrized by $\beta \in (0,\infty)$:
	\begin{equation}\label{eq:Exp-Dist}
	p_\beta(z) = \beta e^{-\beta z}\mathbf{1}_{(0,\infty)}(z).
	\end{equation}
	The above family is indeed a 1-parameter exponential family with natural parameter $\theta := -\beta$ and $\Omega = (-\infty, 0)$. Here $T(x) = x$, $h(x) = \mathbf{1}_{(0,\infty)}(x)$ and $A(\theta) = \log \int_0^\infty e^{\theta x} dx = \log(-\theta^{-1})$.
\end{example}

\begin{example}[Multivariate normal]\label{ex:Mult-Normal} Consider the family of multivariate normal distributions on $\R^d$ with a fixed known nonsingular covariance matrix $\Sigma \in \R^{d \times d}$ and unknown mean vector $\beta = (\beta_1,\ldots, \beta_d) \in \R^d$, i.e., $Z \sim N_d(\beta, \Sigma)$ has density given by
	\begin{equation}\label{eq:Multi-Normal}
	p_\beta(z) = \frac{e^{-\frac{1}{2} (z - \beta)^\top \Sigma^{-1} (z - \beta)}}{\sqrt{(2 \pi)^{d} |\Sigma|}}, \qquad \mbox{for } \; z \in \R^d.
	\end{equation}
	It is easy to check that~\eqref{eq:Multi-Normal} can be expressed in the form~\eqref{eq:ExpFamily} where we take $$\theta := \Sigma^{-1} \beta, \qquad T(z) := z, \qquad A(\theta) := \frac{1}{2}\theta^\top \Sigma \theta \quad \mbox{and} \quad  h(z) \equiv p_0(z) = \frac{e^{-\frac{1}{2} z^\top \Sigma^{-1} z}}{\sqrt{(2 \pi)^{d} |\Sigma|}}.$$
\end{example}
Suppose that $Z \sim p(\cdot \mid \theta)$ as in~\eqref{eq:ExpFamily}. Here are some important properties of exponential families.
\begin{enumerate}
	\item The support of $Z$ (i.e., $z$ such that $p(z \mid \theta) >0$) does not depend on $\theta$.
	
	\item It is clear that the statistic $T(Z)$ is a {\it sufficient statistic} for this family. It can be shown that\footnote{A proof of this can be obtained as follows. Recall~\eqref{eq:A-CGF}. Thus, $$e^{A(\theta)} = \int e^{\theta^\top T(z)} h(z) \, d \lambda (z) .$$ Differentiating this expression with respect to $\theta_j$, which can be done under the integral if $\theta \in \Omega^o$ (here $\Omega^o$ is the interior of $\Omega$), gives $$e^{A(\theta)} \frac{\partial A(\theta)}{\partial \theta_j} = \int T_j(z) e^{\theta^\top T(z)} h(z) \,d \lambda (z)  \quad \Rightarrow \frac{\partial A(\theta)}{\partial \theta_j} = \int T_j(z) p(z \mid \theta) \, d\lambda(z) = \E_\theta [T_j(Z)].$$ }  
	\begin{equation}\label{eq:Mean-Exp}
	\E_\theta [T_j(Z)] = \frac{\partial A(\theta)}{\partial \theta_j}, \qquad \mbox{for } \; j = 1,\ldots,m.
	\end{equation}
	
	\item The natural parameter space $\Omega$ is a {\it convex  set} and the cumulant function $A(\cdot)$ is a {\it convex function}. 
	
	\item The moment generating function of $T \equiv (T_1(Z), \ldots, T_m(Z))$ is, for $u \in \R^m$ such that $u + \theta \in \Omega$, 
	\begin{eqnarray*}
		M_T(u) := \E[e^{u^\top T}] & = & \int e^{u^\top T} e^{\theta^\top T - A(\theta)} h(z) \, d\lambda(z) \\ 
		& = & e^{A(u + \theta) - A(\theta)} \int p(z \mid {u +\theta}) d\lambda(z) = e^{A(u + \theta) - A(\theta)}.     
	\end{eqnarray*}
	
	\item The {\it cumulant generating function} is
	\begin{equation}\label{eq:CGF}
	K_T(u) := \log M_T(u) = A(u + \theta) - A(\theta).
	\end{equation} 
	
	\item Noting that if $M_T(\cdot)$ is finite in some neighborhood of the origin, then $M_T$ has continuous derivatives of all orders at the origin, and for $r_j \ge 0$, for $j = 1,\ldots, m$, $$\E [T_1^{r_1}(Z) \times \cdots \times T_s^{r_s}(Z)] = \frac{\partial^{r_1}}{\partial u_1^{r_1}} \ldots \frac{\partial^{r_s}}{\partial u_s^{r_s}} M_T(u) \Big|_{u=0}.$$ 
	Thus, when $r_j = 1$  and $r_k = 0$ for all $k \ne j$, we obtain~\eqref{eq:Mean-Exp}.

\end{enumerate}
See~\cite[Chapter 10]{Keener2010} and~\cite{Efron2022} for a more detailed study of exponential families.

\section{Tweedie's formula}\label{sec:Tweedie}
Now suppose that $\Theta$ is assumed to have a prior distribution $G^*$ (on $\Omega \subset \R^m$). Thus our model becomes:
\begin{equation}\label{eq:Mdl}
\Theta \sim G^* \qquad \mbox{ and } \qquad Z \mid \Theta = \theta \sim p(\cdot \mid \theta),
\end{equation}
where we assume that $p(\cdot \mid \theta)$ comes from the exponential family~\eqref{eq:ExpFamily}. 
Then the marginal density of $Z$ (w.r.t.~$\lambda$) is 
$$f_{G^*}(z) := \int  p(z \mid \theta) \, dG^*(\theta), \qquad \mbox{for } \; z \in \R^d.$$
Let $\mathcal{Z} \subset \R^d$ be the support of the marginal distribution of $Z$. Now Bayes rule provides the posterior density of $\Theta$ given $Z$. Suppose that $\Theta$ has density $g(\cdot)$, w.r.t.~a dominating measure $\xi$, with support contained in the set $\Omega \subset \R^m$. Then, the posterior density of $\Theta$ given $Z=z$ (w.r.t.~$\xi$) is given by,  for $\theta \in \Omega$ and $z \in \mathcal{Z}$,
\begin{equation}\label{eq:Bayes-Rule}
p_{\Theta| Z}(\theta \mid z) = \frac{p(z\mid \theta) g(\theta)}{f_{G^*}(z)} = \frac{e^{\theta^\top T(z) -A(\theta)} h(z)g(\theta)}{f_{G^*}(z)} = e^{\theta^\top T(z) -\kappa(z)} e^{-A(\theta)} g(\theta),
\end{equation}
where 
\begin{equation}\label{eq:kappa}
\kappa(z) := \log \left( \frac{f_{G^*}(z)}{h(z)} \right), \qquad \mbox{for } \;\; z \in \mathcal{Z}.\end{equation} This implies that $\Theta \mid Z = z$ is also an {\it exponential family} with canonical parameter $T(z)$, sufficient statistic $\Theta$, and log-partition function $\kappa(z)$. Thus, the cumulant generating function is (cf.~\eqref{eq:CGF})
\begin{equation}\label{eq:Post-CGF}
\log \E[e^{\Theta^\top t} \mid Z = z] = \kappa(t + z) - \kappa(z)
\end{equation}
for $z \in \mathcal{Z}$ such that $t + z \in \mathcal{Z}$.



Tweedie's formula, given below, calculates the posterior expectation of $\Theta$ given $Z = z$ in the setting~\eqref{eq:Mdl}.

\begin{lemma}[Tweedie's formula] For $z \in \mathcal{Z}$, we have 
	\begin{equation}\label{eq:Tweedie}
	\E[\Theta \mid Z = z] = \nabla \kappa(z) = \frac{\nabla f_{G^*}(z)}{f_{G^*}(z)} - \frac{\nabla h(z)}{h(z)}.
	\end{equation}
\end{lemma}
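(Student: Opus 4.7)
The plan is to derive Tweedie's formula by extracting the posterior mean from the cumulant generating function already identified in~\eqref{eq:Post-CGF}, and then to unpack $\kappa$ via its definition in~\eqref{eq:kappa}. The starting point is the observation that in the posterior decomposition~\eqref{eq:Bayes-Rule}, $\Theta \mid Z = z$ itself belongs to an exponential family with sufficient statistic $\Theta$, canonical parameter $T(z)$, and log-partition function $\kappa(z)$. This is exactly the setup in which the standard exponential-family identity $\E[\Theta\mid Z=z] = \nabla_t\,\log\E[e^{\Theta^\top t}\mid Z=z]\big|_{t=0}$ applies, and so the computation reduces to differentiating the right-hand side of~\eqref{eq:Post-CGF}.

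First, I would fix $z \in \mathcal{Z}$ and consider the map $t \mapsto \log \E[e^{\Theta^\top t} \mid Z = z] = \kappa(t+z) - \kappa(z)$, valid for $t$ in a neighborhood of $0$ (since $z$ lies in the interior of $\mathcal{Z}$, the set $\{t : t+z \in \mathcal{Z}\}$ is an open neighborhood of the origin). Differentiating the left-hand side in $t$ at $t=0$, and justifying the interchange of differentiation and integration by the usual dominated-convergence argument for exponential families (all moments of $\Theta$ conditional on $Z=z$ exist because the conditional MGF is finite on an open neighborhood of the origin), yields $\E[\Theta \mid Z=z]$. Differentiating the right-hand side at $t=0$, the term $\kappa(z)$ is constant in $t$ and $\nabla_t \kappa(t+z)\big|_{t=0} = \nabla \kappa(z)$. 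This gives the first equality $\E[\Theta\mid Z=z] = \nabla \kappa(z)$.

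For the second equality, I would simply apply the chain rule to the identity $\kappa(z) = \log f_{G^*}(z) - \log h(z)$ from~\eqref{eq:kappa}, obtaining $\nabla \kappa(z) = \nabla f_{G^*}(z)/f_{G^*}(z) - \nabla h(z)/h(z)$; this step only requires that $f_{G^*}$ and $h$ be differentiable and strictly positive at $z$, which is assumed implicitly when writing the formula. The only non-trivial step is justifying the differentiation-under-the-integral needed to interpret the left-hand side of~\eqref{eq:Post-CGF} as a differentiable function of $t$, but this is routine for exponential families and does not pose a real obstacle once the posterior is recognized as an exponential family member with canonical parameter $T(z)$.
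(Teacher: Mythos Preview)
Your proposal is correct and follows essentially the same approach as the paper's proof: both recognize from~\eqref{eq:Bayes-Rule} that $\Theta \mid Z=z$ is itself an exponential family with sufficient statistic $\Theta$ and log-partition function $\kappa$, and then invoke the standard identity that the mean of the sufficient statistic equals the gradient of the log-partition function. The paper simply cites this identity as property~2 (equation~\eqref{eq:Mean-Exp}) applied to the posterior family, whereas you unpack it by differentiating the cumulant generating function~\eqref{eq:Post-CGF} at $t=0$; these are the same argument at different levels of detail.
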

\begin{proof}
	The result is a direct consequence of the fact that the distribution of $\Theta \mid  Z = z$ is an $m$-parameter exponential family with log-partition function $\kappa(\cdot)$ defined via~\eqref{eq:kappa}: By property 2.~above (see~\eqref{eq:Mean-Exp}) the expectation of the sufficient statistic $\Theta$ can then be expressed as the gradient of the log-partition function.
\end{proof}
For $d=m = 1$, the above formula for the Gaussian case was given in~\cite{Robbins-1956}.~\cite{Efron-2011} calls this Tweedie's formula since Robbins attributes it to M.C.K.~Tweedie;
however it appears earlier in~\cite{Dyson-1926} who credits it to the English astronomer Arthur
Eddington.

\begin{lemma}\label{lem:Grad-Cvx}
	Consider model~\eqref{eq:Mdl} where we assume that $p(\cdot \mid \theta)$, for $\theta \in \Omega$, is a member of an exponential family of distributions as in~\eqref{eq:ExpFamily} with $T(z) = z$ and $m=d$. Suppose further that $h(\cdot)$ in~\eqref{eq:ExpFamily} integrates to 1 (w.r.t.~$\lambda$). Then $\kappa(\cdot)$, as defined in~\eqref{eq:kappa}, is a convex function. As a consequence, $\E[\Theta \mid Z = z]$ is the gradient of a convex function.
\end{lemma}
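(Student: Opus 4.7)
The plan is to reduce the lemma to a standard log-convexity fact about cumulant-generating-type integrals and then invoke Tweedie's formula. Since $T(z)=z$ and $m=d$, the density in \eqref{eq:ExpFamily} takes the form $p(z\mid \theta) = h(z)\, e^{\theta^\top z - A(\theta)}$. Integrating against the prior gives
\[
f_{G^*}(z) \;=\; h(z) \int_\Omega e^{\theta^\top z - A(\theta)}\, dG^*(\theta),
\]
so, from the definition~\eqref{eq:kappa},
\[
\kappa(z) \;=\; \log \int_\Omega e^{\theta^\top z}\, d\nu(\theta), \qquad d\nu(\theta) := e^{-A(\theta)}\, dG^*(\theta),
\]
where $\nu$ is a finite positive measure on $\Omega$. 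This is the key reduction: the $h$ factor, and even the normalization of $\nu$, drops out, leaving $\kappa$ as the logarithm of a Laplace-transform-type integral in $z$.

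Next I would establish convexity of $\kappa$ directly from this representation. For $z_0, z_1 \in \mathcal{Z}$ and $\lambda \in [0,1]$, writing $e^{\theta^\top(\lambda z_1 + (1-\lambda) z_0)} = \bigl(e^{\theta^\top z_1}\bigr)^{\lambda}\bigl(e^{\theta^\top z_0}\bigr)^{1-\lambda}$ and applying H\"older's inequality with conjugate exponents $1/\lambda$ and $1/(1-\lambda)$ to the measure $\nu$ gives
\[
\int_\Omega e^{\theta^\top(\lambda z_1 + (1-\lambda) z_0)}\, d\nu(\theta)
\;\le\;
\Bigl(\int_\Omega e^{\theta^\top z_1}\, d\nu(\theta)\Bigr)^{\lambda}
\Bigl(\int_\Omega e^{\theta^\top z_0}\, d\nu(\theta)\Bigr)^{1-\lambda}.
\]
Taking logarithms yields $\kappa(\lambda z_1 + (1-\lambda) z_0) \le \lambda \kappa(z_1) + (1-\lambda)\kappa(z_0)$, i.e., $\kappa$ is convex on $\mathcal{Z}$.

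Finally, by Tweedie's formula~\eqref{eq:Tweedie}, $\E[\Theta \mid Z=z] = \nabla \kappa(z)$, so the posterior mean is the gradient of the convex function $\kappa$, which is the desired conclusion. The argument is essentially self-contained; the only mild technical point, rather than a real obstacle, is verifying that both sides of the H\"older inequality are finite and positive for $z \in \mathcal{Z}$ so that the logarithm is well defined --- this is immediate since $\mathcal{Z}$ is the support of the marginal distribution of $Z$ (so $f_{G^*}(z)>0$) and $h(z)>0$ on $\mathcal{Z}$ because $p(z\mid\theta)$ is proportional to $h(z)$. The assumption that $h$ integrates to one is not needed for the convexity argument itself; it is used only to guarantee that $h$ can legitimately play the role of a base density in the exponential family representation invoked when applying Tweedie's formula.
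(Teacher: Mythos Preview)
Your proof is correct and follows essentially the same approach as the paper: both recognize that $\kappa(z)=\log\int e^{\theta^\top z}\,d\nu(\theta)$ is a log-partition (log-Laplace) function and conclude convexity from that, then invoke Tweedie's formula for the gradient statement. The only difference is that the paper simply cites the standard fact that log-partition functions of exponential families are convex, whereas you spell out the underlying H\"older argument explicitly.
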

\begin{proof}
	Observe that under the assumptions of the lemma, from~\eqref{eq:kappa} we see that the distribution of $\Theta \mid Z = z$ is an $m$-parameter exponential family with log-partition function $\kappa(\cdot)$ defined by~\eqref{eq:kappa}. As the log-partition function $\kappa(\cdot)$ is known to be convex, the result follows. 
\end{proof}

\begin{remark}[Tweedie's formula for multivariate normal distribution]
	Suppose now that $Z$ has multivariate normal distribution with known covariance matrix as in Example~\ref{ex:Mult-Normal}. Then, for $z \in \R^d$, $$\E[\Theta \mid Z = z] = \nabla \kappa(z) = \Sigma^{-1} z + \frac{\nabla f_{G^*}(z)}{f_{G^*}(z)},$$ where the last equality follows from~\eqref{eq:Tweedie} and the fact that $\nabla h(z) = - h(z)(\Sigma^{-1} z) $. Thus, the Bayes estimator of mean $\mu$ in~\eqref{eq:Multi-Normal} is 
	\begin{equation}\label{eq:Tweedie-2}
	\E[\mu \mid Z = z] = z + \Sigma \frac{\nabla f_{G^*}(z)}{f_{G^*}(z)}.
	\end{equation}
\end{remark}

\section{Auxiliary results from measure theory and functional analysis}
\label{app:MeasureFunctional}

\begin{lemma}
	\label{lem:CharacterizationTL^0}
	Let $\mu$ be a Borel probability measure over $\R^d$. Suppose that $\{ \mathrm{T}_n \}_{n \in \N}$ is a sequence of (vector valued) Borel measurable maps $\mathrm{T}_n: \R^d \rightarrow \R^m$ and suppose that $\mathrm{T}$ is another Borel measurable map from $\R^d$ into $\R^m$.  
	
	The sequence of measures $\pi_n =: ( \mathrm{Id} \times \mathrm{T}_n)_{\sharp} \mu $ converges weakly to $\pi:= (\mathrm{Id} \times \mathrm{T})_{\sharp } \mu$ if and only if $\mathrm{T}_n $ converges in $\mu$-measure to $\mathrm{T}$, i.e., for every $\eta>0$ we have
	\[ \lim_{n \rightarrow \infty} \mu \left( \left\{ z \in \R^d \: : \: |\mathrm{T}_n(z) -\mathrm{T}(z) | \geq \eta  \right\} \right)=0. \]
\end{lemma}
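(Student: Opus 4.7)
The plan is to establish the two directions separately. The implication ``convergence in $\mu$-measure $\Rightarrow$ weak convergence of $\pi_n$'' is the more routine of the two, while the converse requires a Lusin-type regularization of the Borel measurable map $\mathrm{T}$, which I expect to be the main technical step: a merely measurable $\mathrm{T}$ cannot be plugged directly into a bounded continuous test function on $\R^d\times \R^m$, and this is the only obstacle that goes beyond standard manipulations.

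For the forward direction I would pick an arbitrary $\phi \in C_b(\R^d \times \R^m)$ and observe that
\[
\int \phi \, d\pi_n = \int \phi(z, \mathrm{T}_n(z)) \, d\mu(z), \qquad \int \phi \, d\pi = \int \phi(z, \mathrm{T}(z))\, d\mu(z).
\]
Convergence in $\mu$-measure implies that every subsequence of $\{\mathrm{T}_n\}$ admits a further subsequence converging $\mu$-a.e.\ to $\mathrm{T}$. Continuity and boundedness of $\phi$ then permit passing to the limit by dominated convergence along such a subsequence, and the standard subsequence principle (``every subsequence has a convergent sub-subsequence with the same limit'') upgrades this to convergence of the whole sequence of integrals.

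For the reverse direction, fix $\eta>0$ and $\epsilon>0$. Since $\mathrm{T}$ is Borel measurable and $\mu$ is a Borel probability measure on $\R^d$, Lusin's theorem combined with Tietze's extension theorem produces a continuous $\tilde{\mathrm{T}}:\R^d \to \R^m$ and a Borel set $F\subseteq \R^d$ with $\mu(F^c)<\epsilon$ such that $\tilde{\mathrm{T}}(z)=\mathrm{T}(z)$ for every $z \in F$. The function
\[
\phi(z,y) := \min\bigl(|y-\tilde{\mathrm{T}}(z)|,\, \eta/2 \bigr)
\]
is bounded and continuous on $\R^d \times \R^m$, so the hypothesis $\pi_n \rightharpoonup \pi$ gives
\[
\lim_{n\to\infty} \int \min\bigl(|\mathrm{T}_n(z)-\tilde{\mathrm{T}}(z)|,\,\eta/2\bigr)\,d\mu(z) = \int \min\bigl(|\mathrm{T}(z)-\tilde{\mathrm{T}}(z)|,\,\eta/2\bigr)\,d\mu(z) \leq (\eta/2)\,\epsilon,
\]
where the last inequality uses $\mathrm{T}=\tilde{\mathrm{T}}$ on $F$ and $\mu(F^c)<\epsilon$.

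To close the argument, I would combine Markov's inequality with a triangle inequality:
\[
\mu\bigl(\{|\mathrm{T}_n-\mathrm{T}|\geq \eta\}\bigr) \leq \mu\bigl(\{|\mathrm{T}_n-\tilde{\mathrm{T}}|\geq \eta/2\}\bigr) + \mu(F^c) \leq \frac{2}{\eta}\int \min\bigl(|\mathrm{T}_n-\tilde{\mathrm{T}}|,\,\eta/2\bigr)\, d\mu + \epsilon.
\]
Taking $\limsup$ in $n$ and using the previous display gives $\limsup_n \mu(\{|\mathrm{T}_n-\mathrm{T}|\geq\eta\}) \leq 2\epsilon$; letting $\epsilon \to 0$ completes the proof. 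The whole argument thus reduces to the Lusin regularization producing a continuous surrogate $\tilde{\mathrm{T}}$ so that one can build admissible test functions of the form $\min(|y-\tilde{\mathrm{T}}(z)|,c)$; everything else is a routine dominated-convergence / Markov-inequality manipulation.
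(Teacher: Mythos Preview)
Your proof is correct. Both directions are handled soundly: the ``measure $\Rightarrow$ weak'' implication via the subsequence principle and dominated convergence is exactly what the paper does, and for the harder ``weak $\Rightarrow$ measure'' direction your Lusin/Tietze regularization combined with the explicit test function $\phi(z,y)=\min(|y-\tilde{\mathrm{T}}(z)|,\eta/2)$ and a Markov bound is clean and complete.

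The paper takes a genuinely different route for the ``weak $\Rightarrow$ measure'' direction. Instead of Lusin's theorem and a continuous surrogate, it works with the L\'evy--Prokhorov metric: setting $\varepsilon_n:=2d_{LP}(\pi_n,\pi)$, it approximates $\mathrm{T}$ by a \emph{Lipschitz} function $\psi_r$ on a set of large $\mu$-measure, then estimates $\pi_n(A_r)$ for the set $A_r=\{(z,\theta):|\psi_r(z)-\theta|\ge\tfrac{2}{3}\eta\}$ via the inequality $\pi_n(A_r)\le\pi(A_r^{\varepsilon_n})+\varepsilon_n$, and finally uses the Lipschitz continuity of $\psi_r$ to control the $\varepsilon_n$-enlargement. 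Your argument is shorter and more elementary: you only need a continuous (not Lipschitz) approximant, and the test-function approach avoids unpacking the L\'evy--Prokhorov definition and tracking set enlargements. The paper's approach, on the other hand, makes the metric structure of weak convergence explicit and is closer in spirit to the $TL^p$ framework it cites. Both are valid; yours is the more direct of the two.
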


\begin{proof}
	Recall that weak convergence of probability measures is equivalent to convergence in Levy-Prokhorov metric, which we recall is defined as:
	\[ d_{LP}(\pi_n, \pi) := \inf \left\{\varepsilon>0 \: : \: \pi_n(A) \leq  \pi(A^\veps) + \veps \text{ and }  \pi(A) \leq  \pi_n(A^\varepsilon) + \veps, \quad \forall A \in \mathfrak{B}(\R^d \times \R^m)     \right\}.  \]
	In the above, for an arbitrary $A$ the set $A^\veps$ is defined as the set of points $(z, \theta)$ such that there exists $(\tilde z , \tilde \theta) \in A$ with $|z-\tilde z | + |\theta -\tilde \theta| <\veps $. 

	Let us first assume that $\pi_n$ converges weakly to $\pi$ and let $\veps_n:= 2d_{LP}(\pi_n, \pi)$. Fix $\eta>0$ and $r>0$. From the fact that $\mu$ is a Borel probability measure over $\R^d$, it follows that $\mathrm{T}$ can be approximated in the $\mu$-a.e. convergence sense by a sequence of Lipschitz continuous functions (with possibly growing Lipschitz constants). Indeed, by density (in the $\mu$-a.e. sense) of simple functions in the set of all measurable functions and the fact that we are considering the Borel $\sigma$-algebra (which is generated by open sets) one can reduce the problem to approximating (scalar) indicator functions of open sets. In turn, using rescaled distance functions (which are Lipschitz), one can easily  approximate indicator functions of open sets with Lipschitz continuous functions as desired.  It thus follows that there exists a Lipschitz function $\psi_r: \R^d \rightarrow \R^m$ such that 
	\[  \mu(G_r) \leq r  \]
	for the set $G_r$ defined as
	\[ G_r:= \left\{ z \in \R^d \,:\,|\psi_r(z) - \mathrm{T}(z)|\geq \frac{\eta}{3}  \right\}.  \]
	The above says that we can approximate the Borel measurable function $\mathrm{T}$ up to accuracy $\eta/3$ by the Lipschitz function $\psi_r$ on a set with ``large'' $\mu$-probability. 
	Intersecting the set $\left\{ z \in \R^d \: : \: |\mathrm{T}_n(z) -\mathrm{T}(z) | \geq \eta  \right\}$ with $G_r$ and, separately, with $G_r^c$, we get the inequality
	\[  \mu \left( \left\{ z \in \R^d \: : \: |\mathrm{T}_n(z) -\mathrm{T}(z) | \geq \eta  \right\} \right) \leq r + \mu \left( \left\{ z \in \R^d \: : \: |\mathrm{T}_n(z) -\psi_r(z) | \geq \frac{2}{3}\eta  \right\} \right).      \]
	Let us now consider the set 
	\[ A_r:= \left\{ (z, \theta) \in \R^d \times \R^m\: : \: |\psi_r(z) -\theta|\geq \frac{2}{3}\eta \right\}. \]
	Thanks to the specific form of the measure $\pi_n$, we can write
	\begin{align*}
	\mu\left( \left\{  z \in \R^d \,:\, |\psi_r(z) - \mathrm{T}_n(z)|\geq \frac{2}{3}\eta \right\} \right) & = \pi_n(A_r) \leq \pi(A_r^{\veps_n}) + \veps_n.
	\end{align*}
	On the other hand, 
	\begin{align*}
	\pi(A_r^{\veps_n}) & =   \mu\left( \left\{ z  \in  \R^d \, : \,  \exists\, (\tilde z , \tilde \theta) \text{ s.t. } |z-\tilde z | +| \tilde \theta - \mathrm{T}(z) | < \veps_n \text{ and } |\tilde \theta - \psi_r(\tilde z)| \geq \frac{2}{3}\eta \right\} \right) 
	\\& \leq \mu\left( \left\{ z  \in  \R^d \, : \,  \exists\, \tilde z \text{ s.t. } |z-\tilde z | < \veps_n \text{ and } |\mathrm{T}(z) - \psi_r(\tilde z)| \geq \frac{2}{3}\eta -\veps_n \right\} \right).
	\end{align*}
	In turn, we see that 
	\[ \mu \left( G_r \cap  \left\{ z \in \R^d \,: \, \exists\, \tilde z \text{ s.t. } |z-\tilde z | < \veps_n \text{ and } |\mathrm{T}(z) - \psi_r(\tilde z)| \geq \frac{2}{3}\eta -\veps_n \right\}   \right) \leq r, \]
	and $\mu( G_r^c \cap  \{ z \in \R^d \,: \,  \exists \, \tilde z \text{ s.t. } |z-\tilde z | < \veps_n \text{ and } |\mathrm{T}(z) - \psi_r(\tilde z)| \geq \frac{2}{3}\eta -\veps_n \}   )$ is smaller than
	\[   \mu \left(\left\{ z \in \R^d \,: \,  \exists \, \tilde z \text{ s.t. } |z-\tilde z | < \veps_n \text{ and } |\psi_r(z) - \psi_r(\tilde z)| \geq \frac{1}{3}\eta -\veps_n \right\}   \right).  \]
	Since the function $\psi_r$ is Lipschitz and $\veps_n \rightarrow 0$ as $n \rightarrow \infty$, it follows that
	\[ \lim_{n \rightarrow \infty}   \mu \left( \left\{ z \in \R^d \,: \, \exists \, \tilde z \text{ s.t. } |z-\tilde z | < \veps_n \text{ and } |\psi_r(z) - \psi_r(\tilde z)| \geq \frac{1}{3}\eta -\veps_n \right\}   \right)= 0. \] 
	From all the above inequalities it follows that
	\[ \limsup_{n \rightarrow \infty }  \mu \left( \left\{ z \in \R^d \: : \: |\mathrm{T}_n(z) -\mathrm{T}(z) | \geq \eta  \right\} \right)  \leq  2r.\]
	Since $r>0$ was arbitrary, we conclude that
	\[  \lim_{n \rightarrow \infty }  \mu \left( \left\{ z \in \R^d \: : \: |\mathrm{T}_n(z) -\mathrm{T}(z) | \geq \eta  \right\} \right) =0, \] 
	as we wanted to prove.

	Conversely, if $\mathrm{T}_n$ converges in $\mu$-measure, then we can assume without loss of generality that the convergence is actually $\mu$-a.e. (as we can work along subsequences). It follows now that for every $\phi \in C_b(\R^d \times \R^m)$, 
	\begin{align*}
	\lim_{n \rightarrow \infty} \int \phi(z, \theta) \, d\pi_n(z, \theta) &=\lim_{n \rightarrow \infty} \int \phi(z, \mathrm{T}_n(z)) \, d\mu(z)  
	\\&=  \int \lim_{n \rightarrow \infty} \phi(z, \mathrm{T}_n(z)) \, d\mu(z) 
	\\& = \int  \phi(z,  \lim_{n \rightarrow \infty} \mathrm{T}_n(z)) \, d\mu(z)
	\\& =\int  \phi(z,  \mathrm{T}(z)) \, d\mu(z)
	\\& = \int \phi(z, \theta) \, d\pi(z, \theta),
	\end{align*}
	where the second equality follows from the dominated convergence theorem, and the third equality follows from the continuity of $\phi$. This shows that $\pi_n$ converges weakly to $\pi$.
\end{proof}

\begin{remark}
	Lemma \ref{lem:CharacterizationTL^0} is analogous to the characterization of the $TL^p$ convergence in Proposition 3.12. in \cite{NGTSlepcev}. In Lemma \ref{lem:CharacterizationTL^0}, however, we have restricted our attention to the case where the base measure for the entire approximating sequence is the same (i.e., $\mu$).
\end{remark}

We recall the definition of convergence in the weak topology of the Hilbert space $\mathbb{L}^2(\R^d : \R^m; \mu)$.

\begin{defn}
	\label{def:WeakL2}
	We say that the sequence $\{ \mathrm{T}_n \}_{n \in \N}$ converges weakly in $\mathbb{L}^2(\R^d : \R^m; \mu)$ to $\mathrm{T}$ if
	\[ \lim_{n \rightarrow \infty} \int \mathrm{T} _n(z) \cdot g(z) \,d\mu(z) =  \int \mathrm{T}(z) \cdot g(z) \, d\mu(z)  \, \quad \forall g \,\in \mathbb{L}^2(\R^d : \R^m; \mu).   \]
\end{defn}

The next two lemmas are well-known results in measure theory and functional analysis.

\begin{lemma}
	\label{lem:MeasureWeakL2}
	Suppose that $\mathrm{T}_n \rightarrow \mathrm{T}$ in $\mu$-measure (as defined in Lemma \ref{lem:CharacterizationTL^0}) and that 
	\[ \sup_{n \in \N} \int |\mathrm{T}_n(z)|^2\,  d\mu(z) < \infty.   \]
	Then $\mathrm{T}_n$ converges weakly in $\mathbb{L}^2(\R^d : \R^m; \mu)$ to $\mathrm{T}$, as $n \rightarrow \infty$.
\end{lemma}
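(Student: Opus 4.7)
The plan is to establish weak convergence directly, without going through Banach--Alaoglu arguments, by exploiting the fact that $\mu$ is a probability measure (so $L^2 \subseteq L^1$) together with the uniform $L^2$ bound.

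First, I would note that $\mathrm{T}$ itself lies in $\mathbb{L}^2(\R^d:\R^m; \mu)$. Indeed, convergence in $\mu$-measure implies that some subsequence $\mathrm{T}_{n_k}$ converges to $\mathrm{T}$ pointwise $\mu$-a.e., so Fatou's lemma applied to $|\mathrm{T}_{n_k}|^2$ yields
\[
\int |\mathrm{T}(z)|^2 \, d\mu(z) \leq \liminf_{k \to \infty} \int |\mathrm{T}_{n_k}(z)|^2 \, d\mu(z) \leq M := \sup_{n} \int |\mathrm{T}_n(z)|^2 \, d\mu(z) < \infty.
\]
In particular, $\|\mathrm{T}_n - \mathrm{T}\|_{\mathbb{L}^2} \leq 2\sqrt{M}$ for every $n$.

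Next, fix $g \in \mathbb{L}^2(\R^d:\R^m; \mu)$ and $\varepsilon > 0$, and for each $n$ introduce the set $A_n^\varepsilon := \{z \in \R^d : |\mathrm{T}_n(z) - \mathrm{T}(z)| \geq \varepsilon\}$. I would split the inner-product integral as
\[
\int \langle \mathrm{T}_n - \mathrm{T}, g\rangle \, d\mu = \int_{(A_n^\varepsilon)^c} \langle \mathrm{T}_n - \mathrm{T}, g\rangle \, d\mu + \int_{A_n^\varepsilon} \langle \mathrm{T}_n - \mathrm{T}, g\rangle \, d\mu.
\]
On the complement of $A_n^\varepsilon$ the integrand is pointwise bounded by $\varepsilon |g|$, so since $\mu$ is a probability measure and $\|g\|_{\mathbb{L}^1(\mu)} \leq \|g\|_{\mathbb{L}^2(\mu)}$ by Cauchy--Schwarz, the first term is bounded in absolute value by $\varepsilon \|g\|_{\mathbb{L}^2}$. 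The second term is controlled by Cauchy--Schwarz in the form $\|\mathrm{T}_n - \mathrm{T}\|_{\mathbb{L}^2} \cdot \|g \mathbf{1}_{A_n^\varepsilon}\|_{\mathbb{L}^2} \leq 2\sqrt{M} \, \|g \mathbf{1}_{A_n^\varepsilon}\|_{\mathbb{L}^2}$.

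The final step, and the only place where some care is needed, is to show that $\|g \mathbf{1}_{A_n^\varepsilon}\|_{\mathbb{L}^2} \to 0$ as $n \to \infty$ for each fixed $\varepsilon$. This follows from the absolute continuity of the finite measure $|g|^2 \, d\mu$ with respect to $\mu$: given $\delta > 0$, there exists $\eta > 0$ such that $\mu(B) < \eta$ implies $\int_B |g|^2 \, d\mu < \delta$, and since $\mu(A_n^\varepsilon) \to 0$ by the definition of convergence in measure, eventually $\int_{A_n^\varepsilon} |g|^2 \, d\mu < \delta$. Combining the two bounds gives $\limsup_n \bigl|\int \langle \mathrm{T}_n - \mathrm{T}, g\rangle \, d\mu\bigr| \leq \varepsilon \|g\|_{\mathbb{L}^2}$, and letting $\varepsilon \downarrow 0$ finishes the proof. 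The argument has essentially no hard step; the mild subtlety is the absolute-continuity observation, which is what makes convergence in measure upgrade to weak $\mathbb{L}^2$ convergence under the uniform second-moment bound.
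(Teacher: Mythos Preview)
Your proof is correct and follows essentially the same idea as the paper's: both exploit the uniform $L^2$ bound to control integrals over sets of small $\mu$-measure (the paper phrases this as ``uniform integrability'' followed by a dominated/Vitali convergence argument, while you unpack the same mechanism directly via Cauchy--Schwarz and the absolute continuity of $|g|^2\,d\mu$ with respect to $\mu$). Your version is more explicit and self-contained than the paper's three-line sketch, and in particular you verify $\mathrm{T}\in\mathbb{L}^2$ via Fatou, which the paper leaves implicit.
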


\begin{proof}
	From the second moment condition we deduce that the sequence $\{ \mathrm{T}_n \}_{n \in \N}$ is uniformly integrable. This, together with the dominated convergence theorem, allows us to conclude that
	\[ \lim_{n \rightarrow \infty} \int \mathrm{T}_n(z) \cdot g(z)  \,d\mu(z) =   \int   \mathrm{T}(z)  \cdot g(z)  \,d\mu(z)   \]
	for every $g \in \mathbb{L}^2(\R^d : \R^m; \mu) $, which is what we wanted to show.

\end{proof}

\begin{lemma}
	\label{lem:weak+norm=strong}
	If $\mathrm{T}_n$ converges weakly in $\mathbb{L}^2(\R^d : \R^m; \mu)$ to $\mathrm{T}$, and in addition we have
	\[ \lim_{n \rightarrow \infty} \int |\mathrm{T}_n(z)|^2 d\mu(z)= \int |\mathrm{T}(z)|^2 \,d\mu(z),  \]
	then 
	\[ \lim_{n \rightarrow \infty} \lVert \mathrm{T}_n - \mathrm{T}\rVert_{\mathbb{L}^2(\R^d : \R^m; \mu)}=0.  \]
\end{lemma}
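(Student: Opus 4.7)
The plan is to exploit the Hilbert space identity that expresses the squared norm $\|\mathrm{T}_n - \mathrm{T}\|^2_{\mathbb{L}^2(\R^d : \R^m ; \mu)}$ as a sum of three terms, each of whose limit we can identify using exactly the two hypotheses at hand. Concretely, I would begin by writing
\[
\int |\mathrm{T}_n(z) - \mathrm{T}(z)|^2 \, d\mu(z) = \int |\mathrm{T}_n(z)|^2 \, d\mu(z) - 2 \int \mathrm{T}_n(z) \cdot \mathrm{T}(z) \, d\mu(z) + \int |\mathrm{T}(z)|^2 \, d\mu(z).
\]
This algebraic expansion reduces the problem to computing the limit of each term on the right-hand side.

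For the first term, the norm-convergence hypothesis directly gives $\int |\mathrm{T}_n|^2 \, d\mu \to \int |\mathrm{T}|^2 \, d\mu$. For the cross term, I would observe that $\mathrm{T} \in \mathbb{L}^2(\R^d : \R^m; \mu)$, so $\mathrm{T}$ itself is a valid test function in Definition \ref{def:WeakL2}; applying the weak convergence hypothesis with $g = \mathrm{T}$ yields
\[
\lim_{n \to \infty} \int \mathrm{T}_n(z) \cdot \mathrm{T}(z) \, d\mu(z) = \int \mathrm{T}(z) \cdot \mathrm{T}(z) \, d\mu(z) = \int |\mathrm{T}(z)|^2 \, d\mu(z).
\]
The third term is already a constant independent of $n$. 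Substituting these three limits back into the expansion, the right-hand side tends to $\int |\mathrm{T}|^2 \, d\mu - 2\int |\mathrm{T}|^2 \, d\mu + \int |\mathrm{T}|^2 \, d\mu = 0$, which is precisely the claim.

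There is essentially no obstacle here: this is the standard Radon--Riesz / Kadec--Klee property for Hilbert spaces, and it follows in a few lines once one recognizes that the key trick is to use $\mathrm{T}$ itself as the test function in the definition of weak convergence. The only tacit fact I rely on is that $\mathrm{T} \in \mathbb{L}^2(\R^d : \R^m; \mu)$, which is implicit in the statement since weak convergence is being considered in this Hilbert space (alternatively, it follows from lower semicontinuity of the norm under weak convergence together with the uniform second moment bound implied by the stated norm convergence).
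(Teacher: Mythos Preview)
Your proposal is correct and follows essentially the same approach as the paper: expand the square, use the second-moment hypothesis for $\int |\mathrm{T}_n|^2\,d\mu$, and use weak convergence with $g=\mathrm{T}$ for the cross term. If anything, your write-up is slightly more explicit than the paper's about why the cross term converges.
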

\begin{proof}
	Expanding the square, we get
	\[ \int |\mathrm{T}_n(z) - \mathrm{T}(z)|^2 \, d\mu(z) =  \int |\mathrm{T}_n(z)|^2 \, d\mu(z) +  \int |\mathrm{T}(z)|^2 \, d\mu(z)  - 2 \int \mathrm{T}_n(z) \cdot \mathrm{T}(z)  \, d\mu(z). \]
	The result follows now from the above display, the assumed consistency of second moments, and the fact that $\mathrm{T}_n$ converges weakly to $\mathrm{T}$ (see Definition~\ref{def:WeakL2}). 
\end{proof}

\section{More general loss functions $\ell(\cdot,\cdot)$}\label{sec:General-cost}
If the squared error loss in problem \eqref{eq:Oracle-Bayes-risk} is substituted with an arbitrary loss function $\ell(\cdot,\cdot)$, the resulting problem
\begin{equation}
\inf_{\delta: \R^d \rightarrow \R^m } \E_{(Z, \Theta)\sim P_{Z,\Theta} }\left[\ell(\delta(Z), \Theta) \right]  \qquad \mbox{subject to }\;\; \delta(Z) \sim G^*
\label{eq:GeneralLoss}
\end{equation}
can still be written as a standard OT problem in Monge form:
\begin{equation}
\min_{\delta \: : \: \delta_{\sharp} \mu = G^* } \int_{\R^d} c_\ell(\delta(z), z) \, d \mu(z)
\label{eq:OTGeneral}
\end{equation}
for the cost function
\[ c_\ell(\theta , z ) := \E [\ell(\theta, \Theta ) \: | \: Z=z ], \qquad \mbox{for} \;\;\theta \in \R^m, \: z\in {\R^d}. \]
The existence of solutions for \eqref{eq:Oracle-Bayes-risk} then reduces to proving existence of an OT map for \eqref{eq:OTGeneral}. 

Investigating the existence of OT maps for specific transport problems is an important topic in the  theory of OT. A general strategy that can be followed for proving existence of optimal maps (also called Monge maps) is based on the analysis of the optimality conditions of solutions to the Kantorovich relaxation~\cite[Chapters 1-3]{Villani2003} of the original Monge problem; an important property of Kantorovich relaxations is that they can be shown to have solutions under very mild lower-semicontinuity assumptions on the transportation cost function (see e.g.,~\cite[Chapter 5]{Villani}). Notice that the Kantorovich relaxation of \eqref{eq:OTGeneral} takes the form
\[  \min_{\pi \in \Gamma(\mu, G^*)} \int c_\ell(\theta, z) \, d\pi(z,\theta). \]
Under appropriate assumptions on the cost function and marginals of a general OT problem, one can show that a solution to the Kantorovich relaxation must be supported on a graph of a function, and from this one can infer the existence of a solution to the original Monge problem. In principle, one could attempt to carry out this program for the OT problem \eqref{eq:OTGeneral}, but we notice that the dependence of the cost $c_\ell(\cdot,\cdot)$ on the loss function $\ell$ and on the model $P_{Z , \Theta}$ may, in general, be rather intractable. For this reason, in this paper we have focused on one tractable and important case, namely, the setting of the squared error loss, for which we can prove a variety of theoretical results and discuss a variety of algorithmic consequences.


\section{An empirical Bayes approach to estimating the OT-based denoiser $\delta^*$}
\label{sec:EmpiricalBayes}
{Suppose that we observe $Z_1,\ldots, Z_n$ from model~\eqref{eq:Mix-Mdl} where the unobserved latent variables are $\Theta_1,\ldots, \Theta_n$ drawn i.i.d.~from $G^*$. We assume here that $G^*$ is unknown and belongs to a (sub)-family $\Ps$ of $\Ps(\Omega)$, the space of all probability measures on $\Omega \subset \R^m$. In the following we discuss an approach to estimate the OT-based denoiser $\delta^*$ based on the observed data $Z_1,\ldots, Z_n$. We plan to pursue a more thorough analysis of this framework in future work.
	
	Our approach can be broken down into three steps: (a) first we estimate the unknown prior $G^*$, say by $\hat G$, using the method of maximum likelihood, and (b) then use $\hat G$ as a plug-in estimator to solve an empirical version of the Kantorovich relaxation problem in~\eqref{eqn:Kantorovich}. This yields an optimal coupling (based on the data) which can (c) then be used to define an estimator of the OT-based denoiser $\delta^*$. 
	
	Let us describe each step in a bit more detail now. 
	
	(a) We apply the method of maximum likelihood (ML) to estimate $G^*$. Marginally, the observations $Z_i$'s are i.i.d.~with density $f_{G^*}$ (as defined in~\eqref{eq:Marg-Dens}). A ML estimator is any $\hat G\in \Ps$ which maximizes the marginal likelihood of the observations~$(Z_i)_{i=1}^n$, i.e., 
	\begin{align}\label{eq-NPMLE}
	\hat G &\in \argmax_{G\in \Ps} \frac{1}{n}\sum_{i=1}^n\log f_{G}(Z_i).
	\end{align}
	Note that when $\Ps = \Ps(\Omega)$, the space of all probability measures on $\Omega \subset \R^m$, this estimator is called the nonparametric MLE (NPMLE) of $G^*$ and has been studied in detail in the statistics literature; see~\cite{kiefer1956consistency, Lindsay-1983, Lindsay-1995, Jiang-Zhang-2009, Soloff-Et-Al-2021} and the references therein. In particular, in this case~\eqref{eq-NPMLE} is an infinite dimensional convex optimization problem for which several algorithms have been proposed; see e.g.,~\cite{Laird-1978, Bohning-1999, Lashkari08, Soloff-Et-Al-2021, Zhang-et-Al-2022}. Moreover, this approach can be applied even when $\Ps \subsetneq \Ps(\Omega)$ and/or $\Ps$ is finite dimensional. In the empirical Bayes literature this approach falls under the general framework of $G$-modelling as we directly estimate the unknown prior $G^*$ (\cite{Efron-19}).

	(b) In our second step, given an estimate~$\hat G$ of the prior~$G^*$, empirical Bayes imitates the optimal Bayesian analysis \cite{Efron-19}. If $G^*$ were known, the Bayes estimator of $\Theta_i$ (under the squared error loss) would be the posterior mean $\E_{G^*}[\Theta_i\mid Z_i]$ as defined in~\eqref{eq:Bayes-Est} (here by $\E_{G^*}[\ldots]$ we emphasize the dependence on $G^*$). 
	The NPMLE~\eqref{eq-NPMLE} yields a fully data-driven, empirical Bayes estimate of this posterior mean via
	\begin{align}\label{eq-posterior-mean}
	\hat{\bar \theta}(Z_i) \coloneqq \E_{\hat G}\left[\hat \Theta_i\mid Z_i\right], \;\; \mathrm{~where~} \;\;\hat \Theta_i\sim \hat G \;\;\mathrm{~and~} \;\; Z_i\mid \hat \Theta_i = \theta\sim p(\cdot \mid \theta).
	\end{align}
	Once we obtain an estimator ($\hat{\bar \theta}$ as above) of $\bar \theta(\cdot)$, we can solve an empirical version of~\eqref{eqn:Kantorovich} defined via 
	\begin{equation}
	\hat \pi \in \argmin_{ \pi \in \Gamma(\mu_n, \hat G)} \int |\hat{\overline{\theta}}(z) - \vartheta |^2 \, d \pi( \mu_n, \vartheta),
	\label{eq:OT_PushForward2}
	\end{equation} 
	where $\mu_n := \frac{1}{n} \sum_{i=1}^n \delta_{Z_i}$ is the empirical distribution of the $Z_i$'s. If $\hat G$ is the NPMLE over $\Ps(\Omega)$, the above computation is quite straightforward as it is known that $\hat G$ is finitely supported (see~\cite{Lindsay-1995}) and thus~\eqref{eq:OT_PushForward2} reduces to a discrete-discrete OT problem which can be solved using the various computational OT tools available in the literature (see e.g.,~\cite{Peyre-2019}).  
	
	(c) The optimal coupling $\hat \pi$ obtained in~\eqref{eq:OT_PushForward2} can now be used to construct an estimator of the OT-based denoiser $\delta^*(\cdot) \equiv \nabla \varphi^*(\hat {\bar \theta}(\cdot))$ (see~\eqref{eq:delta-OT} and~\eqref{eq:OT_PushForward}) via the {\it barycentric projection} of $\hat \pi$:
	\begin{equation}
	\delta_{\hat \pi}(z) := \int_{\Omega} \vartheta  \, d \hat \pi( \vartheta | z), \qquad \mbox{for}\;\; z \in {\mathcal{Z}}.
	\end{equation}
	We conjecture that $\delta_{\hat \pi}$ will be a consistent estimator of $\delta^*$; see~\cite{Deb-Et-Al-2021} and~\cite{Slawski-Sen-2022} where the barycentric projection estimator has been investigated and shown to be consistent for estimating OT maps.
}


\end{document}